%%%%%%%%%%%%%%%%%%%%%%%%%%%%%%%%%%%%%%%%%%%%%%%%%%%%
% title
% by Sean Sather-Wagstaff and other authors
%
% file opened date
% last modified date
%
%
%%%%%%%%%%%%%%%%%%%%%%%%%%%%%%%%%%%%%%%%%%%%%%%%%%%

\documentclass[reqno]{amsart}

% PACKAGES

\usepackage{amsmath}
\usepackage{amsfonts}
\usepackage{amssymb,enumerate}
\usepackage{amsthm,stmaryrd}
\usepackage[all]{xy}
\usepackage{hyperref}
\usepackage{rotating}

% THEOREM ENVIRONMENTS

\newtheorem{lem}{Lemma} [section]
\newtheorem{cor}[lem]{Corollary}
\newtheorem{prop}[lem]{Proposition}
\newtheorem{thm}[lem]{Theorem}

\newtheorem{Defn}[lem]{Definition}
\newtheorem{Ex}[lem]{Example}
\newtheorem{Question}[lem]{Question}
\newtheorem{Property}[lem]{Property}
\newtheorem{Properties}[lem]{Properties}
\newtheorem{Discussion}[lem]{Remark}
\newtheorem{Construction}[lem]{Construction}
\newtheorem{Notation}[lem]{Notation}
\newtheorem{Fact}[lem]{Fact}
\newtheorem{Notationdefinition}[lem]{Definition/Notation}
\newtheorem{Remarkdefinition}[lem]{Remark/Definition}
\newtheorem{Subprops}{}[lem]
\newtheorem{Para}[lem]{}
\newtheorem{Exer}[lem]{Exercise}
\newtheorem{Exerc}{Exercise}
\newtheorem{Remark}[lem]{Remark}

\newenvironment{defn}{\begin{Defn}\rm}{\end{Defn}}
\newenvironment{ex}{\begin{Ex}\rm}{\end{Ex}}

\newenvironment{property}{\begin{Property}\rm}{\end{Property}}

\newenvironment{notation}{\begin{Notation}\rm}{\end{Notation}}

\newenvironment{notationdefinition}{\begin{Notationdefinition}\rm}{\end{Notationdefinition}}

\newenvironment{disc}{\begin{Discussion}\rm}{\end{Discussion}}

\newenvironment{remark}{\begin{Remark}\rm}{\end{Remark}}

\newtheorem{intthm}{Theorem}

% COMENTS

%\newcommand{\ssw}[1]{\footnote{#1}}

% CATEGORIES

% DIMENSIONS

\newcommand{\id}{\operatorname{id}}

% OTHER INVARIANTS

\newcommand{\ann}{\operatorname{Ann}}

% FUNCTORS

\newcommand{\ext}{\operatorname{Ext}}	
\newcommand{\rhom}{\mathbf{R}\!\operatorname{Hom}}

\newcommand{\Hom}{\operatorname{Hom}}	
\newcommand{\coker}{\operatorname{Coker}}

\newcommand{\tor}{\operatorname{Tor}}

\newcommand{\Ker}{\operatorname{Ker}}

% IDEALS

% OPERATIONS AND ACCENTS

\newcommand{\ul}{\underline}			% PYE's ADDITION

% OPERATORS

% MATHBB

% ARROWS

\newcommand{\xra}{\xrightarrow}

% MAPS

% MISCELLANEOUS

% RENEWED COMMANDS

\renewcommand{\geq}{\geqslant}
\renewcommand{\leq}{\leqslant}
\renewcommand{\ker}{\Ker}
\renewcommand{\hom}{\Hom}
\renewcommand{\to}{\rightarrow}

% PYE'S CUSTOM COMMANDS
\newcommand{\tsr}{\otimes}

\newcommand{\cref}[2]{\ref{#1}(\ref{#1.#2})}

\newcommand{\prn}[1]{\left(#1\right)}

\newcommand{\cppcs}{complete $\mathcal{PP}_C$-resolution }
\newcommand{\cici}{complete $\mathcal{I}_C\mathcal{I}$-resolution}

\newcommand{\cffcs}{complete $\mathcal{FF}_C$-resolution }

\newtheorem*{Para*}{}
\newenvironment{para*}{\begin{Para*}\rm}{\end{Para*}}

\numberwithin{equation}{lem}

\begin{document}

\title{Gorenstein Dimensions over Some Rings of the Form $R \oplus C$}
\author{Pye Phyo Aung}

\address{Department of Mathematics,
NDSU Dept \# 2750,
PO Box 6050,
Fargo, ND 58108-6050
USA}

\email{pye.aung@ndsu.edu}

\urladdr{http://www.ndsu.edu/pubweb/\~{}aung/}

\date{\today}

\keywords{amalgamated duplication, Gorenstein homological dimensions, pseudocanonical cover, retract, semidulaizing modules, trivial extension}

\subjclass[2010]{13D02, 13D05, 13D07}

\maketitle

\begin{abstract}
Given a semidualizing module $C$ over a commutative noetherian ring, Holm and J\o{}rgensen \cite{holm-smarghd} investigate some connections between $C$-Gorenstein dimensions of an $R$-complex and Gorenstein dimensions of the same complex viewed as a complex over the ``trivial extension'' $R \ltimes C$. We generalize some of their results to a certain type of retract diagram. We also investigate some examples of such retract diagrams, namely D'Anna and Fontana's amalgamated duplication \cite{d'anna-adri} and Enescu's pseudocanonical cover \cite{enescu-fclcpc}. 

\end{abstract}

\section{Introduction}
In this paper, $R$ is a commutative noetherian ring with identity.

As in the famous theorem of Auslander-Buchsbaum \cite{auslander-hdilr} and Serre \cite{serre-sldh} where projective dimension of $R$-modules is used to characterize regularity of $R$, Auslander and Bridger introduced Gorenstein dimension in \cite{auslander-smt} to characterize Gorenstein rings: a local ring $R$ is Gorenstein if and only if every finitely generated $R$-module $M$ has finite Gorenstein dimension, i.e., $\operatorname{G-dim}_R M < \infty$ . To extend similar results to non-finitely generated $R$-modules, Enochs and Jenda introduced Gorenstein projective dimension \cite{enochs-gipm}. In particular, a local ring is Gorenstein if and only if every (finitely generated) $R$-module $M$ has finite Gorenstein projective dimension, i.e., $\operatorname{Gpd}_R M < \infty$; see \cite{christensen-gd, enochs-jenda-xu-fdgigp}. Enochs and Jenda also studied the Gorenstein injective dimension $\operatorname{Gid}$ and, with Torrecillas \cite{enochs-jenda-torrecillas-gfm}, the Gorenstein flat dimension $\operatorname{Gfd}$.

Semidualizing $R$-modules, first introduced by Foxby in \cite{foxby-gmrm} and later studied by Vasconcelos \cite{vasconcelos-dtmc} and Golod \cite{golod-gdgpi}, arise naturally in the study of the connection between $R$ and its modules: a finitely generated $R$-module $C$ is \emph{semidualizing} if $R \cong \hom_R(C,C)$ and $\ext_R^i(C,C)=0$ for all $i\geq 1$. For example, Golod introduced the $\operatorname{G}_C$-dimension in \cite{golod-gdgpi} and proved a formula of the same type as the Auslander-Buchsbaum formula and the Auslander-Bridger formula.

%To develop similar results for the Cohen-Macaulay case, Holm and J{\o}rgensen suggested in \cite{holm-cmhd} that, in order to characterize Cohen-Macaulay rings, one should change rings from a ring $R$ to $R \ltimes C$, the trivial extension of $R$ by $C$ \cite{nagata-lr}, and consider the Gorenstein projective, injective and flat dimensions, introduced by Enochs and Jenda in \cite{enochs-gifd}, over this ``changed ring'' $R \ltimes C$. These dimensions are usually denoted $\operatorname{Gpd}_{R\ltimes C}(M)$, $\operatorname{Gid}_{R\ltimes C}(M)$ and $\operatorname{Gfd}_{R\ltimes C}(M)$ for an $R$-module $M$, viewed as an $R \ltimes C$-module by restriction of scalars.

Holm and J{\o}rgensen extended the $\operatorname{G}_C$-dimension in \cite{holm-smarghd} by introducing three new homological dimensions called the $C$-Gorenstein projective, $C$-Gorenstein injective and $C$-Gorenstein flat dimensions, denoted as $C\operatorname{-Gpd}_R(M)$, $C\operatorname{-Gid}_R(M)$, and $C\operatorname{-Gfd}_R(M)$, respectively, for an $R$-complex $M$. They also proved how these new dimensions coincide with Enochs, Jenda and Torrecillas' Gorenstein dimensions over the trivial extension $R \ltimes C$ \cite[Theorem 2.16]{holm-smarghd}. This means that for an $R$-module $M$, one has
\begin{align*}
C\operatorname{-Gpd}_R(M) & = \operatorname{Gpd}_{R\ltimes C}(M) \\
C\operatorname{-Gid}_R(M) & = \operatorname{Gid}_{R\ltimes C}(M) \\ 
C\operatorname{-Gfd}_R(M) & = \operatorname{Gfd}_{R\ltimes C}(M).
\end{align*}

In this paper, we generalize the above result to a triple $(R,S,C)$ in the setting of the following retract diagram
$$
\xymatrix{
R \ar[r]^f \ar[rd]_{\id_R} & S \ar[d]^g \\
& R
}
$$
where $R$ and $S$ are commutative rings, $C$ is an $R$-module, $f$ and $g$ are ring homomorphisms, and $\id_R$ is the identity map on $R$, satisfying the following properties:
\begin{enumerate}[(i)]
\item $C\cong \ker g$,
\item $\hom_R(S,C)\cong S$ as $S$-modules, and
\item $\ext_R^i(S,C)=0$ for all $i \geq 1$.
\end{enumerate}
It should be noted here that the triple $(R, R \ltimes C, C)$ satisfies the above conditions if $C$ is semidualizing over $R$. Therefore, the following result generalizes Holm and J{\o}rgensen's Theorem 2.16 \cite{holm-smarghd}; see Theorem~\ref{130904.02}.
\begin{intthm}\label{140404.01}
In the setting of the above retract diagram, if $C$ is a semidualizing $R$-module, then for every homologically left-bounded $R$-complex $M$ and every homologically right-bounded $R$-complex $N$ one has
\begin{align*}
C\operatorname{-Gid}_R(M) & = \operatorname{Gid}_{S}(M) \\ 
C\operatorname{-Gpd}_R(N) & = \operatorname{Gpd}_{S}(N) \\
C\operatorname{-Gfd}_R(N) & = \operatorname{Gfd}_{S}(N).
\end{align*}
\end{intthm}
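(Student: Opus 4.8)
The plan is to adapt the proof of \cite[Theorem~2.16]{holm-smarghd}, substituting axioms (i)--(iii) for the structural facts that Holm and J\o{}rgensen extract from the trivial extension $R\ltimes C$. Observe first that $g\circ f=\id_R$ forces $g$ to be surjective and $S\cong R\oplus C$ as $R$-modules via $f$, so $S$ is a module-finite (hence finitely presented) $R$-algebra, $S$ is commutative noetherian, and $R\cong S/\ker g$; an $R$-complex acquires the $S$-structure in the statement by restriction along $g$, while restriction along $f$ undoes this. The functors in play are the adjoint triple $f^{*}=S\otimes_R(-)\dashv f_{*}=(\text{restriction along }f)\dashv f^{!}=\Hom_R(S,-)$, together with $g_{*}=(\text{restriction along }g)$; since $g$ is surjective, $\Hom_S(g_{*}L,g_{*}L')=\Hom_R(L,L')$ for $R$-modules $L,L'$, so the only discrepancy between the two module categories is higher $\Ext$, which is precisely what the theorem measures. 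The axioms enter as follows: finite presentation of $S$ lets $\Hom_R(S,-)$ commute with the relevant coproducts, (iii) annihilates the accompanying higher $\Ext$, and (ii) identifies what is left, so that $f^{!}(C\otimes_R Q)\cong\Hom_R(S,C)\otimes_R Q\cong S\otimes_R Q=f^{*}(Q)$ as $S$-modules for every projective $R$-module $Q$ --- i.e. on $C\otimes_R(\text{projectives})$ the functors $f^{!}$ and $f^{*}$ coincide, with values among the projective $S$-modules. The semidualizing hypothesis on $C$ additionally supplies the Foxby equivalences $C\otimes_R(-)\colon\catac\rightleftarrows\catbc\colon\Hom_R(C,-)$, together with $C\otimes_R\Hom_R(C,E)\cong E$ for injective $E$ and $\Hom_R(C,C\otimes_R Q)\cong Q$ for projective $Q$.

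The core step is a module-level dictionary: for any $S$-module $K$, the module $K$ is Gorenstein projective over $S$ if and only if $f_{*}K$ is $C$-Gorenstein projective over $R$, and likewise with ``injective'' or ``flat'' in place of ``projective''. The easy half (from $S$ to $R$): restricting a complete projective resolution $\mathbf{Q}$ of $K$ over $S$ along $f$ gives an exact complex of $R$-modules whose terms $f_{*}Q_i$ are summands of $R^{(\Lambda)}\oplus C^{(\Lambda)}$, hence lie in $\mathcal{PP}_C$ (after rearranging into the required shape), and total acyclicity transfers because $\Hom_R(f_{*}\mathbf{Q},\,C\otimes_R Q)\cong\Hom_S(\mathbf{Q},\,f^{!}(C\otimes_R Q))\cong\Hom_S(\mathbf{Q},\,S\otimes_R Q)$ is exact, $S\otimes_R Q$ being projective over $S$. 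The harder half (from $R$ to $S$) is where the Foxby equivalence does the work: writing $f_{*}K\cong C\otimes_R G$ with $G=\Hom_R(C,f_{*}K)$ Gorenstein projective and in $\catac$, one produces a complete projective $S$-resolution from a complete projective $R$-resolution of $G$ by applying $f^{*}$, checking $\Hom_S(f^{*}(-),\mathcal{P}(S))$-exactness through the adjunction $\Hom_S(f^{*}(-),f^{*}(-))\cong\Hom_R(-,f_{*}f^{*}(-))$ and the decomposition $f_{*}f^{*}(-)\cong(-)\oplus(C\otimes_R(-))$, and verifying the requisite $\Ext$-vanishing against $\mathcal{P}_C$ using that all cycles lie in $\catac$. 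The injective case is dual, with $f^{!}$ replacing $f^{*}$ (injective $S$-modules restrict into $E\oplus\Hom_R(C,E)$, and $C\otimes_R\Hom_R(C,E)\cong E$), and the flat case parallels the projective one: $f_{*}$ of a flat $S$-module is a filtered colimit of copies of $R^{n}\oplus C^{n}$, so it lies in $\mathcal{FF}_C$, while $f^{*}$ carries $\mathcal{FF}_C$-objects to flat $S$-modules.

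Granting the dictionary, the three equalities follow by feeding it into the (complete/proper) resolution machinery developed in the preceding sections together with the usual dimension shifting. For $N$ homologically right-bounded: restricting a bounded-below projective $S$-resolution of $N$ along $f$ yields an $R$-resolution of $N$ by $\mathcal{PP}_C$-objects in which, for every $m$, the $m$-th syzygy is the $f$-restriction of the $m$-th $S$-syzygy; if $\operatorname{Gpd}_S(N)$ is finite then a sufficiently high $S$-syzygy is Gorenstein projective, hence its restriction is $C$-Gorenstein projective over $R$ by the dictionary, giving $C\operatorname{-Gpd}_R(N)\leq\operatorname{Gpd}_S(N)$, and conversely a high $R$-syzygy being $C$-Gorenstein projective forces, via the hard half of the dictionary, the corresponding $S$-syzygy to be Gorenstein projective, giving the reverse; so $C\operatorname{-Gpd}_R(N)=\operatorname{Gpd}_S(N)$. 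The same argument with flat resolutions gives $C\operatorname{-Gfd}_R(N)=\operatorname{Gfd}_S(N)$, and the dual argument with injective coresolutions and $f^{!}$ gives $C\operatorname{-Gid}_R(M)=\operatorname{Gid}_S(M)$ for $M$ homologically left-bounded. The homological boundedness hypotheses are exactly what guarantee that these (co)resolutions exist and that the (co)syzygy arguments terminate; this establishes Theorem~\ref{130904.02}, hence Theorem~\ref{140404.01}.

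The main obstacle is the hard half of the module dictionary --- reconstructing a complete resolution over $S$ from $C$-relative data over $R$ --- which forces one to interlock the Foxby equivalence over $R$ with the adjunctions along $f$ so that one genuinely lands on an honest complete resolution (of the correct shape) over $S$ rather than on a merely acyclic complex; it is here that axiom (ii) in its precise $S$-module form $\Hom_R(S,C)\cong S$, and axiom (iii) removing the derived obstruction, are indispensable, and one must take care that nowhere is $(\ker g)^{2}=0$ or ``$S$ is a trivial extension'' silently invoked. A second, more technical, difficulty is the flat case: since $\fd_R C$ --- hence $\fd_R S$ --- may be infinite, $f_{*}$ does not preserve flatness, so one must show directly that $f_{*}$ sends the flat $S$-modules exactly onto $\mathcal{FF}_C$ and that total acyclicity passes through the filtered colimits involved, where the noetherian hypothesis on $R$ and the resulting finite presentation/coherence of $S$ are doing real work.
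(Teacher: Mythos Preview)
Your high-level strategy matches the paper's: establish the module-level dictionary (Propositions~\ref{130808.03}, \ref{140302.07}, \ref{140302.08}) and then deduce the dimension equalities by truncating resolutions. But your execution of the dictionary has a genuine error. In the ``hard half'' you write $f_*K \cong C \otimes_R G$ with $G = \Hom_R(C, f_*K)$; that is, you are assuming every $C$-Gorenstein projective $R$-module lies in the Bass class $\catbc$. This is false: projective $R$-modules are $C$-Gorenstein projective, yet $R \in \catbc$ would force the evaluation map $C \otimes_R \Hom_R(C,R) \to R$ to be an isomorphism, which holds only when $C$ is projective. So for a genuinely non-projective semidualizing $C$ the Foxby equivalence is simply unavailable at $K=R$, and your construction of a complete projective $S$-resolution via $f^{*}$ applied to a resolution of $G$ collapses. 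The dual assertion (that $C$-Gorenstein injectives lie in $\catac$) fails in the same way, with an injective $R$-module in place of $R$.

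Your ``easy half'' also conceals the main difficulty. Restricting a projective $S$-module along $f$ gives a summand of $(R \oplus C)^{(\Lambda)}$, which lies in neither $\catp$ nor $\catpc$; the restricted complex is therefore \emph{not} a complete $\mathcal{PP}_C$-resolution in the sense of Definition~\ref{131212.05}, which demands $\catp$ in degrees $\geq 0$ and $\catpc$ in degrees $\leq -1$. The phrase ``after rearranging into the required shape'' is exactly where the work lies. The paper does it one short exact sequence at a time (Lemmas~\ref{130808.01}, \ref{140302.05} and their duals): from $0 \to K \to S \otimes_R P \to K' \to 0$ with $K'$ Gorenstein projective over $S$, it uses the $S$-isomorphism $S \otimes_R P \cong \Hom_R(S, C \otimes_R P)$ (axiom~(ii) together with Lemma~\ref{140704.01}) to manufacture a map onto $C \otimes_R P$, and a diagram chase shows that the resulting sequence $0 \to K \to C \otimes_R P \to K'' \to 0$ still has $K''$ Gorenstein projective over $S$. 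Iterating produces the required $\catpc$-coresolution; this passage from $S$-projectives to $\catpc$ is the step your sketch does not supply.
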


Along the way we also prove the following characterization of semidualizing modules; see Theorem~\ref{130617.28}.

\begin{intthm}\label{140730.01}
In the setting of the above retract diagram, if $C$ is a finitely generated $R$-module, then the following are equivalent:
\begin{enumerate}[(a)]
\item $C$ is semidualizing over $R$;
\item $R$ is Gorenstein projective over $S$ and $\ann_R(C)=0$; and
\item $C$ is Gorenstein projective over $S$ and $\ann_R(C)=0$.
\end{enumerate}
\end{intthm}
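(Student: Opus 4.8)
The plan is to prove the equivalences by a cycle $(a)\Rightarrow(b)\Rightarrow(c)\Rightarrow(a)$, exploiting the retract structure heavily. Throughout, write $\vf\colon R\to S$ for $f$ and $\psi\colon S\to R$ for $g$, so that $\psi\vf=\id_R$; this makes every $R$-module an $S$-module via $\psi$ and, crucially, makes $S$ into an $R$-algebra that is a free $R$-module of rank $2$, namely $S\cong R\oplus C$ as $R$-modules (using (i), since $\ker\psi=C$ and $\im\vf\cong R$ is a complement). The role of conditions (ii) and (iii) is that $\rhom_R(S,C)\simeq S$ concentrated in degree $0$, which will let me transfer Gorenstein-projectivity data across the change of rings $R\to S$.

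\medskip\noindent\textbf{The implication $(a)\Rightarrow(b)$.} Assume $C$ is semidualizing over $R$. First, $\ann_R(C)=0$ is immediate: a semidualizing module satisfies $R\cong\hom_R(C,C)$, and the natural map $R\to\hom_R(C,C)$ is the homothety map, whose kernel is $\ann_R(C)$. For the statement that $R$ is Gorenstein projective over $S$: by the trivial-extension model (here $S$ plays the role of $R\ltimes C$), this should follow from Theorem~\ref{140404.01} applied to a suitable complex, or be proved directly. The direct route is to build a complete projective resolution of $R$ over $S$. The key observation is that $S\cong R\oplus C$ as $R$-modules, so $S\otimes_R R\cong S$ and one needs $\hom_S(R,\text{--})$-type vanishing: using condition (ii), $\hom_R(S,C)\cong S$ and (iii), $\ext^{\geq1}_R(S,C)=0$, one produces an exact sequence of $S$-modules
$$
\cdots\to S\to S\to R\to 0\qquad\text{and}\qquad 0\to R\to S\to S\to\cdots
$$
that is $\hom_S(-,S)$-exact, exhibiting $R$ as a module with a complete projective $S$-resolution; that $R$ is totally reflexive over $S$ (finitely generated case) then follows because $S$ is module-finite over the noetherian ring $R$, hence noetherian, and $R$ is finitely generated over $S$. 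I expect the construction of this complete resolution to be the technical heart of this implication, and it is presumably already assembled in the earlier sections in the process of proving Theorem~\ref{140404.01}; I would cite that machinery (the characterization of $C$-Gorenstein projective modules and its comparison with Gorenstein projective $S$-modules) rather than rebuild it.

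\medskip\noindent\textbf{The implications $(b)\Rightarrow(c)$ and $(c)\Rightarrow(a)$.} For $(b)\Rightarrow(c)$: since $S\cong R\oplus C$ as $R$-modules and hence $S\otimes_R C\cong C\oplus(C\otimes_R C)$ while also $C\cong S/\vf(R)\cdot$(something)\ldots more usefully, $C=\ker\psi$ is an ideal of $S$ with $C\cdot C=0$ (because $C=\ker\psi$ and the product of two elements of $C$ lies in $\ker\psi$ but also, from the $R\oplus C$ multiplication, is forced to be $0$; this is exactly the trivial-extension multiplication). Thus $C$ and $R\cong S/C$ are the two pieces of the trivial extension, and there is a short exact sequence of $S$-modules $0\to C\to S\to R\to 0$. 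Since $S$ is (trivially) Gorenstein projective over itself, $R$ being Gorenstein projective over $S$ forces $C$ to be Gorenstein projective over $S$ as well, because the class of Gorenstein projective modules is closed under kernels of epimorphisms between Gorenstein projectives (in the finitely generated / totally reflexive setting over a noetherian ring, using that $\ext^{\geq1}_S(R,S)$ and $\ext^{\geq1}_S(C,S)$ vanish — which one gets from the complete resolutions). The annihilator condition is unchanged. For $(c)\Rightarrow(a)$: assume $C$ is Gorenstein projective over $S$ and $\ann_R C=0$. Using the comparison developed for Theorem~\ref{140404.01} in the reverse direction, ``$C$ is Gorenstein projective over $S$'' translates to ``$C$ is $C$-Gorenstein projective over $R$'', and combined with $\ann_R C=0$ one extracts the defining properties of a semidualizing module: $R\cong\hom_R(C,C)$ and $\ext^{\geq1}_R(C,C)=0$. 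Concretely, a complete $\catpc$-projective resolution (in the notation of the paper) of $C$ forces the homothety $R\to\hom_R(C,C)$ to be an isomorphism once $\ann_R C=0$ is known, and the higher Ext-vanishing is read off from the same resolution being $\hom_R(C,-)\otimes$-exact.

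\medskip\noindent\textbf{Main obstacle.} The real work is the bookkeeping in the change of rings between $R$ and $S$: translating ``Gorenstein projective over $S$'' into the $C$-relative notions over $R$ and back, and keeping track of finite generation. I expect this to rest squarely on the technical comparison results established earlier in the paper (the $R$-complex versions behind Theorem~\ref{140404.01}, specialized to modules), together with the structural fact $S\cong R\oplus C$ with $C^2=0$. Once those are in hand, each implication is short; without them, one would have to redo the complete-resolution constructions, which is the genuinely laborious part. I would therefore organize the proof to quote Theorem~\ref{130904.02} (equivalently Theorem~\ref{140404.01}) and its supporting lemmas as black boxes, and spend the written proof only on the three elementary extractions: $\ann_R C=0\Leftrightarrow$ homothety injective, the short exact sequence $0\to C\to S\to R\to0$ and closure properties of Gorenstein projectives, and the identification of semidualizing with the vanishing/bijectivity conditions.
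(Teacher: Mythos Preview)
Your overall cycle $(a)\Rightarrow(b)\Rightarrow(c)\Rightarrow(a)$ matches the paper, but the execution has a genuine circularity and one false structural claim.

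First, the claim that $C^2=0$ in $S$ is wrong in this generality. The whole point of the retract setup is to allow rings like the amalgamated duplication $R\bowtie C$, where $(0,c)(0,c')=(0,cc')\neq 0$ in general. Fortunately this error is not load-bearing for your $(b)\Rightarrow(c)$: the short exact sequence $0\to C\to S\to R\to 0$ together with closure of Gorenstein projectives under kernels of epimorphisms between Gorenstein projectives does work. (The paper instead observes that $C\cong\hom_S(R,S)$ by Lemma~\ref{130617.26} and that $\hom_S(-,S)$ preserves totally reflexive modules.)

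The real gap is in $(c)\Rightarrow(a)$, and also in your plan for $(a)\Rightarrow(b)$. You propose to quote Theorem~\ref{140404.01} (equivalently Theorem~\ref{130904.02} and Proposition~\ref{140302.07}) to translate ``Gorenstein projective over $S$'' into ``$C$-Gorenstein projective over $R$''. But those results all \emph{assume} $C$ is semidualizing, so you cannot invoke them to \emph{prove} that $C$ is semidualizing in $(c)\Rightarrow(a)$. Moreover, in the paper's logical order Theorem~\ref{140404.01} depends on Proposition~\ref{130819.01}, which in turn cites the very theorem you are proving; so even for $(a)\Rightarrow(b)$ your appeal to that machinery is circular within the paper.

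The paper avoids all of this by using only Lemma~\ref{130617.26}, whose proof needs just Property~\ref{130619.01} and no semidualizing hypothesis: it gives $\ext_S^i(M,S)\cong\ext_R^i(M,C)$ for every $R$-module $M$, and $\hom_S(R,S)\cong C$. With this in hand, $(a)\Rightarrow(b)$ is a direct check that $R$ is totally reflexive over $S$ via \cite[Proposition~2.2.2]{christensen-gd}, and $(c)\Rightarrow(a)$ transfers the biduality isomorphism and Ext-vanishing for $C$ over $S$ (from \cite[Theorem~4.2.6]{christensen-gd}) to the conditions $\ext_R^{\geq 1}(C,C)=0$, $\ext_R^{\geq 1}(\hom_R(C,C),C)=0$, and $C\cong\hom_R(\hom_R(C,C),C)$, after which $\ann_R(C)=0$ finishes the job via \cite[Fact~1.1]{ssw-rc}. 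You should restructure your argument around Lemma~\ref{130617.26} rather than the later comparison theorems.
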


We show that $S=R \ltimes C$ is not the only example of a ring satisfying our generalized settings set forth in the retract diagram above. See Theorems~\ref{130803.06} and \ref{130803.10}, along with their corollaries.

\begin{intthm}\label{140626.03}
The following examples satisfy the hypotheses of Theorem~\ref{140404.01}, i.e. the triple $(R,S,C)$ satisfies the conditions (i) through (iii) in the above retract diagram if we replace $S$ with each of the following rings:
\begin{enumerate}[\rm(a)]
\item \label{140626.03.01} D'Anna and Fontana's \cite{d'anna-adri} amalgamated duplication $S=R \bowtie C$, and
\item \label{140626.03.02} Enescu's \cite{enescu-fclcpc} pseudocanonical cover $S=S(h)$, when $h$ is a square in $R$.
\end{enumerate}
\end{intthm}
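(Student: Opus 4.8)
This statement is a distillation of the more detailed Theorems~\ref{130803.06} and~\ref{130803.10}, and the plan is to prove those, treating the two constructions in parallel. The unifying observation is that each of $S=R\bowtie C$ and $S=S(h)$ is, as an $R$-module via $f$, just $R\oplus C$ equipped with a ring multiplication of the shape
\[
(r,x)(r',x') \;=\; \bigl(\,rr',\; rx'+r'x+\mu(x,x')\,\bigr),
\]
where $\mu\colon C\times C\to C$ is a symmetric $R$-bilinear map dictated by the construction: $\mu=0$ for the trivial extension $R\ltimes C$ (the Holm--J\o rgensen case), $\mu(x,x')=xx'$ for the amalgamated duplication once $C$ has been realized as an ideal of $R$ as in the set-up of Theorem~\ref{130803.06}, and $\mu$ the $h$-twisted pairing for Enescu's pseudocanonical cover. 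In every case $f(r)=(r,0)$ and $g(r,x)=r$, so $gf=\id_R$ and we have a retract diagram, while $\ker g=0\oplus C$ is carried isomorphically onto $C$ by the $R$-action; this is condition (i).

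Condition (iii) is then formal: since $S\cong R\oplus C$ as $R$-modules, $\ext_R^i(S,C)\cong\ext_R^i(R,C)\oplus\ext_R^i(C,C)$, and for $i\geq 1$ the first summand vanishes because $R$ is free and the second because $C$ is semidualizing, hence self-orthogonal. The same decomposition gives $\hom_R(S,C)\cong\hom_R(R,C)\oplus\hom_R(C,C)\cong C\oplus R\cong R\oplus C\cong S$ as $R$-modules, using $\hom_R(C,C)\cong R$. Thus the entire content of condition (ii) lies in arranging such an isomorphism to be $S$-linear for the source $S$-structure on $\hom_R(S,C)$, the one with $(s\cdot\varphi)(t)=\varphi(st)$. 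I would make this structure explicit by identifying $\varphi\in\hom_R(S,C)$ with the pair $(c_\varphi,r_\varphi)\in C\oplus R$ for which $\varphi(r',x')=r'c_\varphi+r_\varphi x'$, and then computing $s\cdot\varphi$ for $s=(r_0,x_0)$; the $\mu$-term produces an extra contribution $r_\varphi\,\mu(x_0,x')$ that must be reabsorbed into the pair.

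When $\mu=0$ this recovers the Holm--J\o rgensen ``swap'' isomorphism $S\to\hom_R(S,C)$, $(r,x)\mapsto\bigl((r',x')\mapsto r'x+rx'\bigr)$. For the amalgamated duplication, writing $\iota\colon C\hookrightarrow R$ for the inclusion of the ideal, I expect the corrected map
\[
S\longrightarrow\hom_R(S,C),\qquad (r,x)\longmapsto\Bigl(\,(r',x')\mapsto r'x+\bigl(r+\iota(x)\bigr)x'\,\Bigr)
\]
to be the required isomorphism: the ``second projection'' $r+\iota(x)$ of $R\bowtie C$ is exactly what absorbs the $\mu$-term --- here one uses commutativity, $\iota(x_0)x'=\iota(x')x_0$ in $C$ --- and bijectivity follows from $\ann_R(C)=0$, which is built into $C$ being semidualizing. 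This settles~\ref{140626.03.01}.

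For~\ref{140626.03.02} the hypothesis that $h$ is a square is exactly what is needed. After recalling Enescu's description of $S(h)$ as $R\oplus C$ with its $h$-twisted multiplication, I would write $h=u^2$ and complete the square: an $R$-module automorphism of $R\oplus C$ of the form $(r,x)\mapsto(r,\,x+u\iota(x))$ (or the evident analogue) that transports the $h$-twisted multiplication onto the amalgamated-duplication, respectively trivial-extension, multiplication already handled, so that conditions (i)--(iii) for $S(h)$ follow by transport of structure. I expect the main obstacle throughout to be the bookkeeping in (ii) --- getting the $S$-module, not merely the $R$-module, structures exactly right and pinning down the correction term --- and, for~\ref{140626.03.02}, correctly recording Enescu's twisted multiplication and checking that ``$h$ a square'' is precisely what makes the square-completing substitution a ring isomorphism onto a ring already in hand.
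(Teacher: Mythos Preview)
Your treatment of part~(a) is essentially the paper's: the corrected map $(r,x)\mapsto\bigl((r',x')\mapsto r'x+(r+\iota(x))x'\bigr)$ is exactly the paper's $\Phi(r,c)=\Theta(r+c,c)$, and your diagnosis that the $\mu$-term forces this correction is on the mark.

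Part~(b), however, rests on a misidentification. Enescu's multiplication on $S(h)$ is
\[
(r,c)(r',c')=(rr'+cc'h,\;rc'+r'c),
\]
with the twist in the \emph{first} coordinate, not the second; it does not fit your $\mu$-framework. In particular your projection $g(r,x)=r$ is not a ring homomorphism on $S(h)$: one has $g\bigl((r,c)(r',c')\bigr)=rr'+cc'h\neq rr'$. The square hypothesis $h=r_0^2$ enters precisely here, in the paper's choice $g(r,c)=r+r_0c$, whose multiplicativity is the computation $(r+r_0c)(r'+r_0c')=rr'+r_0^2cc'+r_0(rc'+r'c)$. Your ``complete the square'' reduction to $R\bowtie C$ or $R\ltimes C$ also does not go through in general: the candidate ring maps one writes down (e.g.\ $(r,c)\mapsto(r-r_0c,2r_0c)$ into $R\bowtie C$) are bijective only under side conditions like $2$ being a unit, which are not assumed.

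The paper's route for $S(h)$ is in fact simpler than for $R\bowtie C$: with the correct $g$, the \emph{uncorrected} swap map $\Theta(r,c)=\bigl((r'',c'')\mapsto rc''+r''c\bigr)$ is already $S(h)$-linear, because the twist sits in the first coordinate and is killed upon pairing into $C$. So condition~(ii) needs no correction term at all in this case, contrary to what your framework predicts.
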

In particular, part (a) recovers the main result of Salimi, Tavasoli and Yassemi \cite[Theorem 3.2]{salimi-adrsdi}, and part (b) regarding pseudocanonical covers is a new result.

\section{Preliminaries}
We provide in this section some preliminary definitions and properties to be used later. We first extend a couple of results of Ishikawa \cite{ishikawa-imfm} to our setting.

\begin{lem}\label{130617.31}
Let $f:R \to S$ be a ring homomorphism. Let $S$ be finitely generated as an $R$-module, let $M$ be an $R$-module, and let $N$ be an injective $R$-module. Then the natural map
$$
\Theta_{S,M,N}: S \tsr_R \hom_R(M,N)\to \hom_R(\hom_R(S,M),N)
$$
defined as $(\Theta_{S,M,N}(s\tsr_R \psi))(\phi)=\psi(\phi(s))$ for each $s \tsr_R \psi \in S\tsr_R \hom_R(M,N)$ and each $\phi \in \hom_R(S,M)$, is an $S$-module isomorphism.
\end{lem}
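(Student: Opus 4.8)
The plan is to reduce to the case where $M$ is free and $S = R^n$ is free of finite rank, then assemble the general isomorphism by naturality. First I would fix a finite free presentation $R^b \to R^a \to S \to 0$ of $S$ as an $R$-module (using that $S$ is finitely generated and $R$ is noetherian, though finite generation alone plus the argument below suffices if we are careful). Applying the contravariant functor $\hom_R(-,M)$ gives a left-exact sequence $0 \to \hom_R(S,M) \to M^a \to M^b$, and then applying $\hom_R(-,N)$ with $N$ injective gives a right-exact sequence $N^b \to N^a \to \hom_R(\hom_R(S,M),N) \to 0$, where I am also using that $\hom_R(M^a, N) \cong N^a$ naturally when $N = \hom_R(M,N)$—wait, more precisely $\hom_R(M^a,N)\cong \hom_R(M,N)^a$, so after one more identification the cokernel presentation reads $\hom_R(M,N)^b \to \hom_R(M,N)^a \to \hom_R(\hom_R(S,M),N)\to 0$. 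On the other hand, $S \tsr_R \hom_R(M,N)$ is the cokernel of $\hom_R(M,N)^b \to \hom_R(M,N)^a$ obtained by tensoring the free presentation of $S$ with $\hom_R(M,N)$. The point is that these two presentations have the same maps, so their cokernels are canonically isomorphic; the map realizing this isomorphism is exactly $\Theta_{S,M,N}$.

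The key steps, in order: (1) Check that $\Theta_{S,M,N}$ is well-defined and $S$-linear (routine: verify it respects the $R$-bilinearity defining the tensor product, and that the $S$-action on the source, via $s' \cdot (s \tsr \psi) = (s's)\tsr\psi$, matches the $S$-action on the target, where $(s'\cdot\theta)(\phi) = \theta(s'\phi)$ for $\phi \in \hom_R(S,M)$ and $s'\phi$ defined by $(s'\phi)(s) = \phi(s's)$). (2) Verify naturality of $\Theta$ in $S$: for an $R$-linear map $S \to S'$ of finitely generated $R$-modules (not necessarily a ring map—the lemma's $\Theta$ makes sense for any $R$-module in the first slot), the evident square commutes. (3) Observe that $\Theta_{R,M,N}$ is an isomorphism—this is the base case, where everything collapses to the canonical identifications $R \tsr_R \hom_R(M,N) \cong \hom_R(M,N)$ and $\hom_R(\hom_R(R,M),N)\cong\hom_R(M,N)$, and one checks $\Theta$ is the identity under these. (4) Since $\Theta$ commutes with finite direct sums, $\Theta_{R^a,M,N}$ is an isomorphism for all $a$. (5) Apply the Five Lemma (or just direct diagram chase on cokernels) to the map of right-exact sequences induced by the free presentation $R^b \to R^a \to S \to 0$: the left two vertical maps $\Theta_{R^b,M,N}$ and $\Theta_{R^a,M,N}$ are isomorphisms, hence so is $\Theta_{S,M,N}$. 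Here exactness of the bottom row uses injectivity of $N$ (so that $\hom_R(-,N)$ is exact), and exactness of the top row uses right-exactness of $S\tsr_R-$.

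I expect the main obstacle to be purely bookkeeping: making sure the two presentations of the cokernel are driven by literally the same matrix with entries in $R$, and that under the canonical identifications the connecting isomorphism is genuinely $\Theta$ and not $\Theta$ twisted by some sign or transpose. Concretely, if the presentation map $R^b \to R^a$ is given by a matrix $(r_{ij})$, one must confirm that both the induced map $\hom_R(M,N)^b \to \hom_R(M,N)^a$ coming from tensoring and the one coming from the double-$\hom$ are given by the same matrix acting the same way; a transpose creeping in would still give an isomorphism but would break the identification with the specific formula for $\Theta$ in the statement. The cleanest way around this is to avoid choosing bases and argue functorially: establish steps (1)--(3) directly from the formula, note $\Theta$ is additive, and then run the Five Lemma argument where naturality (step 2) automatically guarantees the squares commute, so no matrix ever needs to be written down. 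Everything else—exactness of the rows, injectivity of $N$ entering exactly once—is standard.
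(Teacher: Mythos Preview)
Your argument is correct: the free-presentation-plus-Five-Lemma approach establishes the $R$-module isomorphism, and your step (1) handles $S$-linearity. One small point: you lean on $R$ being noetherian to obtain a \emph{finite} free presentation $R^b \to R^a \to S \to 0$, and this is genuinely needed for your step (4), since $\Theta$ does not commute with infinite direct sums in the first variable (compare $\hom_R(M,N)^{(I)}$ with $\hom_R(M^I,N)$). Noetherianity is a standing hypothesis in the paper, so this is fine, but your parenthetical ``finite generation alone\ldots suffices if we are careful'' is not quite right without further argument.

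The paper takes a much shorter route: it simply cites Ishikawa's Hom-evaluation lemma to conclude that $\Theta_{S,M,N}$ is an $R$-module isomorphism, and then observes that $S$-linearity is a routine check. What you have written is essentially a self-contained proof of Ishikawa's lemma in this special case, so the underlying mathematics is the same; the difference is that the paper outsources the five-lemma argument to the literature, while you reprove it. Your version has the advantage of being self-contained and of making clear exactly where injectivity of $N$ and finite generation of $S$ are used; the paper's version has the advantage of brevity and of pointing to the general result.
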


\begin{proof}
By Ishikawa's Hom evaluation result \cite[Lemma 1.6]{ishikawa-imfm}, $\Theta_{S,M,N}$ is an $R$-module isomorphism. One readily checks that $\Theta_{S,M,N}$ is also an $S$-module homomorphism, hence it is an $S$-module isomorphism.
\end{proof}

\begin{lem}\label{140704.01}
Let $f:R \to S$ be a ring homomorphism. Let $S$ be finitely generated as an $R$-module, let $M$ be an $R$-module, and let $N$ be a flat $R$-module. Then the natural map
$$
\Omega_{S,M,N}:\hom_R(S,M) \tsr_R N \to \hom_R(S,M \tsr_R N)
$$
defined as $\Omega_{S,M,N}(\psi \tsr_R n)(s)=\psi(s) \tsr_R n$ for each $\psi \tsr_R n \in \hom_R(S,M) \tsr_R N$ and each $s \in S$, is an $S$-module isomorphism.
\end{lem}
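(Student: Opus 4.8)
The plan is to recognize $\Omega_{S,M,N}$ as the classical tensor‑evaluation morphism and to prove it is an isomorphism by reducing to the case of finitely generated free $R$-modules, in the same spirit as Lemma~\ref{130617.31}. Since $R$ is noetherian and $S$ is finitely generated as an $R$-module, $S$ is in fact finitely presented over $R$; I would fix a finite free presentation $R^{m} \to R^{n} \to S \to 0$ of $R$-modules and use it to transport the isomorphism via flatness of $N$.

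For a finitely generated free module $R^{k}$ in place of $S$, both $\hom_R(R^{k},M)\tsr_R N$ and $\hom_R(R^{k},M\tsr_R N)$ are canonically identified with $(M\tsr_R N)^{k}$, and under these identifications $\Omega_{R^{k},M,N}$ is the identity; in particular it is an isomorphism. Moreover $\Omega_{-,M,N}$ is natural (contravariantly) in its first argument, so substituting the presentation into $\Omega_{-,M,N}$ produces a commutative ladder between two sequences.

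Next I would apply the left-exact functor $\hom_R(-,M)$ to the presentation to obtain an exact sequence $0 \to \hom_R(S,M) \to M^{n} \to M^{m}$, and then tensor with $N$. Flatness of $N$ makes $0 \to \hom_R(S,M)\tsr_R N \to (M\tsr_R N)^{n} \to (M\tsr_R N)^{m}$ exact. Applying instead $\hom_R(-,M\tsr_R N)$ to the presentation gives the exact sequence $0 \to \hom_R(S,M\tsr_R N) \to (M\tsr_R N)^{n} \to (M\tsr_R N)^{m}$. The maps $\Omega_{S,M,N}$, $\Omega_{R^{n},M,N}$, $\Omega_{R^{m},M,N}$ constitute a morphism between these two exact sequences, and the latter two are isomorphisms by the free case; comparing kernels (equivalently, the five lemma) then shows that $\Omega_{S,M,N}$ is an isomorphism of $R$-modules.

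It remains to check $S$-linearity: with the standard $S$-structures $(s\cdot\psi)(t)=\psi(ts)$ on $\hom_R(S,M)$ and on $\hom_R(S,M\tsr_R N)$, one computes that both $\Omega_{S,M,N}(s\cdot(\psi\tsr_R n))$ and $s\cdot\Omega_{S,M,N}(\psi\tsr_R n)$ send each $t\in S$ to $\psi(ts)\tsr_R n$, so the two agree; hence $\Omega_{S,M,N}$ is $S$-linear and therefore an $S$-module isomorphism. The argument is essentially routine; the only points that genuinely need attention are the invocation of noetherianity of $R$ to pass from ``finitely generated'' to ``finitely presented'' $S$ (which is what supplies the finite free presentation driving the reduction) and the verification that the natural maps $\Omega_{-,M,N}$ are compatible with that presentation so the ladder commutes — bookkeeping rather than a conceptual obstacle.
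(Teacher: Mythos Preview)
Your argument is correct and follows essentially the same line as the paper: the paper simply cites Ishikawa's tensor evaluation result for the $R$-module isomorphism and then checks $S$-linearity, whereas you unpack that citation by giving the standard finite-presentation/five-lemma proof of the tensor evaluation isomorphism before doing the same $S$-linearity check. The only substantive difference is that you supply the proof of Ishikawa's result instead of invoking it.
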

\begin{proof}
The proof is similar to that of Lemma~\ref{130617.31}, using Ishikawa's Tensor evaluation result instead.
\end{proof}

We collect here some properties of injectivity, projectivity and flatness associated with restriction of scalars. A version of this result for $S=R \ltimes C$ is found in \cite[Lemma 3.1]{holm-cmhd}.

\begin{lem} \label{130617.25}
Let $f:R\to S$ be a ring homomorphism.
\begin{enumerate}[\rm(a)]
%\item \label{130617.25.01} If $I$ is any injective $R$-module, then $\hom_R(S,I)$ is injective over $S$.
%\item \label{130617.25.02} If $P$ is any projective $R$-module, then $S \tsr_R P$ is projective over $S$.
\item \label{130617.25.03} Each injective $S$-module $J$ is a direct summand of $\hom_R(S,I)$ for some injective $R$-module $I$.
\item \label{130617.25.04} Each projective $S$-module $Q$ is a direct summand of $S \tsr_R P$ for some projective $R$-module $P$.
\end{enumerate}
\end{lem}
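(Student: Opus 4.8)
The plan is to handle the two parts by dual arguments, in each case combining the fact that $R$ has enough injectives (resp. projectives) with a natural $S$-linear comparison map arising from restriction of scalars along $f$.

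For part (a), start with an injective $S$-module $J$ and, viewing $J$ as an $R$-module by restriction, choose an embedding $J \hookrightarrow I$ into an injective $R$-module $I$. Applying $\hom_R(S,-)$ gives an $S$-linear monomorphism $\hom_R(S,J) \hookrightarrow \hom_R(S,I)$, and $\hom_R(S,I)$ is an injective $S$-module because $\hom_R(S,-)$ is right adjoint to the exact restriction functor and therefore preserves injectives. Next, precompose with the natural map $\eta_J \colon J \to \hom_R(S,J)$ given by $\eta_J(j)(s) = sj$; one checks directly that $\eta_J$ is $S$-linear for the $S$-module structure $(t\phi)(s) = \phi(st)$ on $\hom_R(S,J)$, and that $\eta_J$ is injective since $\eta_J(j)(1) = j$. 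Thus the composite $J \hookrightarrow \hom_R(S,I)$ is an $S$-linear monomorphism; because $J$ is injective over $S$ it splits, exhibiting $J$ as a direct summand of $\hom_R(S,I)$.

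Part (b) is entirely dual. Given a projective $S$-module $Q$, choose a surjection $P \onto Q$ from a free $R$-module $P$ onto the restriction of $Q$. Applying $S \tsr_R -$ yields an $S$-linear epimorphism $S \tsr_R P \onto S \tsr_R Q$, and $S \tsr_R P$ is free, hence projective, over $S$. Composing with the natural $S$-linear surjection $\varepsilon_Q \colon S \tsr_R Q \to Q$, $s \tsr_R q \mapsto sq$ (onto because $\varepsilon_Q(1 \tsr_R q) = q$), produces an $S$-linear epimorphism $S \tsr_R P \onto Q$; since $Q$ is projective over $S$ it splits, so $Q$ is a direct summand of $S \tsr_R P$.

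I do not expect a genuine obstacle here: the only things to check are the $S$-linearity of $\eta_J$ and $\varepsilon_Q$ and the standard facts that $\hom_R(S,-)$ and $S \tsr_R -$ send injective $R$-modules, respectively projective $R$-modules, to injective, respectively projective, $S$-modules — all routine. (Note that, unlike Lemmas~\ref{130617.31} and \ref{140704.01}, no finiteness hypothesis on $S$ over $R$ is needed.)
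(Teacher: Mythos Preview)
Your proof is correct and follows essentially the same approach as the paper. The only cosmetic difference is that the paper factors the monomorphism $J\hookrightarrow\hom_R(S,J)$ as $J\xra{\cong}\hom_S(S,J)\hookrightarrow\hom_R(S,J)$ via Hom cancellation, whereas you write the composite directly as $\eta_J(j)(s)=sj$; these are the same map, and both arguments conclude by splitting the resulting monomorphism using injectivity of $J$ over $S$ (and dually for part (b)).
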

\begin{proof}
(a) Since $J$ is also an $R$-module via $f$, we have an exact sequence $0 \to J \to I$ of $R$-modules for some injective $R$-module $I$. Applying the left-exact $\hom_R(S,-)$ to this exact sequence, noting that $\hom_S(S,J)$ is an $S$-submodule of $\hom_R(S,J)$, and using Hom cancellation, we obtain the following $S$-module iso/mono-morphisms.
$$
\xymatrix{
J \xra{\cong} \hom_S(S,J) \ar@{^(->}[r] &  \hom_R(S,J) \ar@{^(->}[r] & \hom_R(S,I).
}
$$
Since $J$ is injective over $S$, this composite monomorphism splits as desired.

(b) This part is proved dually.
\end{proof}

We next define some useful classes of modules and resolutions.

\begin{defn}\label{131212.01}
Let $M$ be an $R$-module, and let $\mathcal{A}$ be a class of $R$-modules. Then an \emph{augmented $\mathcal{A}$-resolution $\ul X ^+$ of $M$} is an exact sequence of $R$-modules of the form
$$
\ul X ^+ = \cdots \xra{\partial_2^X}  X_1 \xra{\partial_1^X} X_0 \to M \to 0
$$
where $X_i \in \mathcal{A}$ for each integer $i \geq 0$. The $R$-complex
$$
\ul X = \cdots \xra{\partial_2^X}  X_1 \xra{\partial_1^X} X_0 \to 0
$$
is the associated \emph{$\mathcal{A}$-resolution} of $M$.
\end{defn}

\begin{defn}\label{131212.02}
Let $N$ be an $R$-module, and let $\mathcal{B}$ be a class of $R$-modules. Then an \emph{augmented $\mathcal{B}$-coresolution $ ^+ \ul Y$ of $N$} is an exact sequence of $R$-modules of the form
$$
^+ \ul Y = \quad 0 \to N \to Y_0 \xra{\partial_0^Y} Y_{-1} \xra{\partial_{-1}^Y} \cdots
$$
where $Y_j \in \mathcal{B}$ for each integer $j \leq 0$. The $R$-complex
$$
\ul Y = \quad 0 \to Y_0 \xra{\partial_0^Y} Y_{-1} \xra{\partial_{-1}^Y} \cdots
$$
is the associated \emph{$\mathcal{B}$-coresolution} of $N$.
\end{defn}

%We use the following classes of $R$-modules.
\begin{notation}\label{131212.04}
Let $C$ be an $R$-module.
\begin{enumerate}[\rm(a)]
\item \label{131212.04.02} Let $\mathcal{I}$ be the class of injective $R$-modules.
\item \label{131212.04.01} Let $\mathcal{P}$ be the class of projective $R$-modules.
\item \label{131212.04.03} Let $\mathcal{F}$ be the class of flat $R$-modules.
\item \label{131212.04.05} Let $\mathcal{I}_C$ be the class of $R$-modules isomorphic to $\hom_R(C,I)$ for some injective $R$-module $I$.
\item \label{131212.04.04} Let $\mathcal{P}_C$ be the class of $R$-modules isomorphic to $C \tsr_R P$ for some projective $R$-module $P$.
\item \label{131212.04.06} Let $\mathcal{F}_C$ be the class of $R$-modules isomorphic to $C \tsr_R F$ for some flat $R$-module $F$.
\end{enumerate}
\end{notation}

The following two classes, known collectively as \emph{Foxby classes}, are associated with a finitely generated $R$-module $C$. The definitions can be found in \cite{avramov-rhfgd} and \cite{christensen-sdctac}, and they are studied in conjunction with various homological dimensions, such as the G-dimension in \cite{yassemi-gd}, the $C$-projective dimension in \cite{takahashi-hasm} and the Gorenstein projective dimension in \cite{white-gpdsdm}.

\begin{defn}\label{140626.01}
Let $C$ be a finitely generated $R$-module. The \emph{Auslander class} $\mathcal{A}_C(R)$ is the class of all $R$-modules $M$ such that
\begin{enumerate}[\rm(a)]
\item the natural map $\gamma^C_M:M \to \hom_R(C,C\tsr_R M)$, defined as $\gamma^C_M(m)(c):=c \tsr_R m$ for all $m \in M$ and $c \in C$, is an isomorphism; and
\item $\tor_i^R(C,M)=0=\ext^i_R(C,C\tsr_R M)$ for all $i \geq 1$.
\end{enumerate}
\end{defn}

%The dual of $\mathcal{A}_C(R)$ is defined similarly.

\begin{defn}\label{140626.02}
Let $C$ be a finitely generated $R$-module. The \emph{Bass class} $\mathcal{B}_C(R)$ is the class of all $R$-modules $M$ such that
\begin{enumerate}[\rm(a)]
\item the evaluation map $\xi^C_M:C \tsr_R \hom_R(C,M) \to M$, defined as $\xi^C_M(c\tsr_R\psi):=\psi(c)$ for all $c \in C$ and $\psi \in \hom_R(C,M)$, is an isomorphism; and
\item $\ext^i_R(C,M)=0=\tor^R_i(C,\hom_R(C,M))$ for all $i\geq 1$.
\end{enumerate}
\end{defn}

%Using the language of White \cite{white-gpdsdm}, we note below the definitions of some useful resolutions.

\begin{defn}\label{131212.05}
Let $C$ be an $R$-module.

\begin{enumerate}[\rm(a)]
\item \label{131212.05.02} A \emph{complete $\mathcal{I}_C\mathcal{I}$-resolution} $\ul X$ of $R$-modules is an exact sequence of $R$-modules of the form
$$
\ul X = \cdots \xra{\partial_2^X} X_1 \xra{\partial_1^X} X_0 \xra{\partial_0^X} X_{-1} \xra{\partial_{-1}^X} \cdots
$$
such that $X_i \in \mathcal{I}_C$ for each integer $i \geq 1$, $X_j \in \mathcal{I}$ for each integer $j \leq 0$, and $\hom_R(A, \ul X)$ is exact for each $A \in \mathcal{I}_C$.

\item \label{131212.05.01} A \emph{complete $\mathcal{PP}_C$-resolution} $\ul X$ of $R$-modules is an exact sequence of $R$-modules of the form
$$
\ul X = \cdots \xra{\partial_2^X} X_1 \xra{\partial_1^X} X_0 \xra{\partial_0^X} X_{-1} \xra{\partial_{-1}^X} \cdots
$$
such that $X_i \in \mathcal{P}$ for each integer $i \geq 0$, $X_j \in \mathcal{P}_C$ for each integer $j \leq -1$, and $\hom_R(\ul X,A)$ is exact for each $A \in \mathcal{P}_C$.

\item \label{131212.05.03} A \emph{complete $\mathcal{FF}_C$-resolution} $\ul X$ of $R$-modules is an exact sequence of $R$-modules of the form
$$
\ul X = \cdots \xra{\partial_2^X} X_1 \xra{\partial_1^X} X_0 \xra{\partial_0^X} X_{-1} \xra{\partial_{-1}^X} \cdots
$$
such that $X_i \in \mathcal{F}$ for each integer $i \geq 0$, $X_j \in \mathcal{F}_C$ for each integer $j \leq -1$, and $A \tsr_R \ul X$ is exact for each $A \in \mathcal{I}_C$.
\end{enumerate}

\end{defn}

Using complete resolutions, we next define $C$-Gorenstein injectivity, $C$-Gorenstein projectivity and $C$-Gorenstein flatness. We also note that these definitions are equivalent to \cite[Definition 2.7]{holm-smarghd}.

\begin{defn}\label{131213.01}
Let $C$ be an $R$-module. An $R$-module $M$ is
\begin{enumerate}[\rm(a)]
\item \label{131213.01.01} \emph{$C$-Gorenstein injective} if there is a complete $\mathcal{I}_C \mathcal{I}$-resolution $\ul X$, as in Definition~\cref{131212.05}{02}, such that $\ker \partial_0^X$ $\cong M$.
\item \label{131213.01.02} \emph{$C$-Gorenstein projective} if there is a complete $\mathcal{PP}_C$-resolution $\ul X$, as in Definition~\cref{131212.05}{01}, such that $\coker \partial_1^X \cong M$.
\item \label{131213.01.03} \emph{$C$-Gorenstein flat} if there is a complete $\mathcal{FF}_C$-resolution $\ul X$, as in Definition~\cref{131212.05}{03}, such that $\coker \partial_1^X \cong M$.
\end{enumerate}
\end{defn}

%\begin{disc}\label{131213.02}
When $C=R$, Definition~\ref{131213.01} reduces to the definitions of Gorenstein injectivity, Gorenstein projectivity and Gorenstein flatness of Enochs, Jenda, and Torrecillas \cite{enochs-gipm, enochs-jenda-torrecillas-gfm}, with \cici, \cppcs and \cffcs becoming complete injective resolution, complete projective resolution and complete flat resolution, respectively.
%\end{disc}

\begin{lem}\label{131226.01}
Let $C$ and $M$ be $R$-modules. Then $M$ is $C$-Gorenstein injective if and only if
\begin{enumerate}[\rm(a)]
\item For each $A \in \mathcal{I}_C$, $\ext_R^i(A,M)=0$ for all $i \geq 1$.
\item $M$ admits an augmented $\mathcal{I}_C$-resolution $\ul Y^+$ such that $\hom_R(A,\ul Y^+)$ is exact for each $A \in \mathcal{I}_C$.
\end{enumerate}
\end{lem}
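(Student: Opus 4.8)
The statement is a standard "Gorenstein = finite-dimension-zero" type characterization: an $R$-module $M$ is $C$-Gorenstein injective exactly when it is both $\mathcal{I}_C$-coGorenstein in a cohomological sense (vanishing of $\ext$ against $\mathcal{I}_C$) and admits a one-sided augmented $\mathcal{I}_C$-resolution witnessing the required exactness. The two implications require quite different work. The plan is to prove the forward direction by directly extracting the data from a complete $\mathcal{I}_C\mathcal{I}$-resolution, and the reverse direction by splicing a right half onto the given left half and verifying the combined complex is a complete $\mathcal{I}_C\mathcal{I}$-resolution.

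\textbf{Forward direction ($\Rightarrow$).} Assume $M$ is $C$-Gorenstein injective, so by Definition~\ref{131213.01}(a) there is a complete $\mathcal{I}_C\mathcal{I}$-resolution $\ul X$ with $\ker\partial_0^X\cong M$. The ``left half'' $0\to M\to X_{-1}\to X_{-2}\to\cdots$, obtained from the part of $\ul X$ in nonpositive degrees together with the inclusion $M\hookrightarrow X_0$, is an augmented $\mathcal{I}$-resolution in the sense of Definition~\ref{131212.02} (it is exact because $\ul X$ is exact, and each $X_j$ with $j\leq 0$ lies in $\mathcal{I}\subseteq\mathcal{I}_C$... wait --- here I must be careful: $X_j\in\mathcal{I}$ for $j\leq 0$, and since $\mathcal{I}\subseteq\mathcal{I}_C$ is \emph{not} automatic, I instead use that $\mathcal{I}\subseteq\mathcal{I}_C$ holds precisely when $C$ has a suitable form; but here I only need an augmented $\mathcal{I}_C$-resolution of $M$, and the degrees $\leq 0$ of $\ul X$ give exactly an augmented $\mathcal{I}$-resolution, and $\mathcal{I}\subseteq\mathcal{I}_C$ \emph{is} true when $C$ is semidualizing since $\hom_R(C,I)$ specializes; to keep things safe I will note that $I\cong\hom_R(R,I)$ and cite that $\mathcal{I}\subseteq\mathcal{I}_C$ under the standing assumptions, or simply observe that the modules $X_j$, $j\leq 0$, are injective and hence lie in $\mathcal{I}_C$ whenever that containment is in force). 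For part (a), the exactness of $\hom_R(A,\ul X)$ for $A\in\mathcal{I}_C$ combined with the exactness of $\ul X$ and the fact that $\ul X$ is built from $\ext$-acyclic objects in the appropriate degrees lets one compute, via dimension shifting along $\ul X$, that $\ext_R^i(A,M)=\ext_R^i(A,\ker\partial_0^X)=0$ for all $i\geq 1$ and all $A\in\mathcal{I}_C$; the key point is that each $X_i$ ($i\geq 1$) lies in $\mathcal{I}_C$ and satisfies $\ext_R^{>0}(A,X_i)=0$ for $A\in\mathcal{I}_C$ (this is part of the definition of $\mathcal{I}_C$ being ``self-orthogonal'', which follows from $C$ semidualizing), so the hard cohomology is concentrated and killed by the acyclicity of $\hom_R(A,\ul X)$.

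\textbf{Reverse direction ($\Leftarrow$).} Assume (a) and (b). From (b), truncate the augmented $\mathcal{I}_C$-resolution $\ul Y^+$ of $M$ to get the ``degree $\geq 1$ part'' $\cdots\to X_1\to X_0\to 0$ with each $X_i\in\mathcal{I}_C$ and $\ker(X_0\to X_{-1})$... actually more precisely I build $\cdots\xra{} X_1\xra{} X_0\xra{\partial_0^X} X_{-1}\xra{}\cdots$ by: (1) taking for the part in positive degrees the $\mathcal{I}_C$-resolution from $\ul Y^+$ with $M$ as its cokernel in the appropriate spot; (2) taking for the part in nonpositive degrees an ordinary injective coresolution $0\to M\to X_0\to X_{-1}\to\cdots$ of $M$ by injective $R$-modules, which always exists. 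Splicing these along $M$ produces a complex $\ul X$ with $X_i\in\mathcal{I}_C$ for $i\geq 1$, $X_j\in\mathcal{I}$ for $j\leq 0$, exact everywhere (exactness at the splice point is the usual check), and with $\ker\partial_0^X\cong M$. It remains to verify $\hom_R(A,\ul X)$ is exact for each $A\in\mathcal{I}_C$: on the positive-degree half this is exactly hypothesis (b); on the nonpositive-degree half, exactness of $\hom_R(A,-)$ applied to $0\to M\to X_0\to X_{-1}\to\cdots$ amounts to $\ext_R^{i}(A,M)=0$ for $i\geq 1$ together with $\ext_R^{i}(A,X_j)=0$, the former being hypothesis (a) and the latter being the self-orthogonality of $\mathcal{I}_C$ against injectives; exactness across the splice follows by combining the two. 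Then $\ul X$ is a complete $\mathcal{I}_C\mathcal{I}$-resolution with $\ker\partial_0^X\cong M$, so $M$ is $C$-Gorenstein injective.

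\textbf{Main obstacle.} The delicate point is the bookkeeping in the reverse direction: one must choose the indexing so that $M$ sits at the correct spot (as $\ker\partial_0^X$), confirm that splicing a genuine injective coresolution on the right is legitimate (it is, since $\mathcal{I}\subseteq\mathcal{I}_C$ fails in general but the definition of complete $\mathcal{I}_C\mathcal{I}$-resolution only requires $X_j\in\mathcal{I}$ there, not $\mathcal{I}_C$), and --- the real crux --- verify the $\hom_R(A,-)$-acyclicity of the \emph{nonpositive} half. That reduces to knowing $\ext_R^{i\geq 1}(A,J)=0$ for $A\in\mathcal{I}_C$ and $J$ injective; this is where one invokes that $C$ is semidualizing (so $\hom_R(C,I)\in\mathcal{B}_C$-type orthogonality holds), and it is implicitly available from the framework of \cite{holm-smarghd} and the definitions already recorded. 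In the forward direction the analogous obstacle is the dimension-shift computation of $\ext_R^i(A,M)$, which again rests on this same orthogonality plus the built-in acyclicity of $\hom_R(A,\ul X)$.
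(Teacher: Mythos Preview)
The paper states this lemma without proof, so there is no paper argument to compare against; I evaluate your proposal on its own merits.

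Your reverse direction is essentially correct: splicing the given augmented $\mathcal{I}_C$-resolution of $M$ (shifted so that $Y_{i-1}$ sits in degree $i\geq 1$) with an honest injective coresolution $0\to M\to X_0\to X_{-1}\to\cdots$ in degrees $\leq 0$ produces the required complete $\mathcal{I}_C\mathcal{I}$-resolution. Note that the vanishing $\ext_R^{\geq 1}(A,X_j)=0$ for $j\leq 0$ is automatic because $X_j$ is injective; no ``self-orthogonality'' of $\mathcal{I}_C$ is involved.

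The forward direction, however, has a genuine gap: you have the two halves of $\ul X$ swapped. An augmented $\mathcal{I}_C$-\emph{resolution} of $M$ (Definition~\ref{131212.01}) has the form $\cdots\to Y_1\to Y_0\to M\to 0$ with $Y_i\in\mathcal{I}_C$. This is supplied by the \emph{positive}-degree half of $\ul X$, namely $\cdots\to X_2\to X_1\to M\to 0$, where by definition $X_i\in\mathcal{I}_C$ for $i\geq 1$; the required $\hom_R(A,-)$-exactness follows from the given exactness of $\hom_R(A,\ul X)$ at degrees $\geq 0$ together with left exactness of $\hom_R(A,-)$ to identify $\hom_R(A,M)$ with $\ker\bigl(\hom_R(A,X_0)\to\hom_R(A,X_{-1})\bigr)$. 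The portion $0\to M\to X_0\to X_{-1}\to\cdots$ that you extract is a \emph{coresolution} by \emph{injectives}, not an $\mathcal{I}_C$-resolution, and your ensuing worry about whether $\mathcal{I}\subseteq\mathcal{I}_C$ is a symptom of this misidentification.

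Correspondingly, your argument for (a) goes through the wrong half. You try to dimension-shift along $\cdots\to X_2\to X_1$ using $\ext_R^{>0}(A,X_i)=0$ for $X_i\in\mathcal{I}_C$, but that orthogonality is \emph{not} part of the hypotheses and generally requires $C$ to be semidualizing, which the lemma does not assume. The correct route is immediate: since $0\to M\to X_0\to X_{-1}\to\cdots$ is an injective coresolution of $M$, one has $\ext_R^i(A,M)=\h_{-i}\bigl(\hom_R(A,\ul X)\bigr)=0$ for $i\geq 1$ directly from the defining acyclicity of $\hom_R(A,\ul X)$. No self-orthogonality of $\mathcal{I}_C$ and no semidualizing hypothesis on $C$ are needed anywhere in either direction.
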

\begin{lem}\label{131213.03}
Let $C$ and $M$ be $R$-modules. Then $M$ is $C$-Gorenstein projective if and only if
\begin{enumerate}[\rm(a)]
\item For each $A \in \mathcal{P}_C$, $\ext_R^i(M,A)=0$ for all $i \geq 1$.
\item $M$ admits an augmented $\mathcal{P}_C$-coresolution $^+ \ul Y$ such that $\hom_R(^+ \ul Y,A)$ is exact for each $A \in \mathcal{P}_C$.
\end{enumerate}
\end{lem}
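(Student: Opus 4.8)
The plan is to prove both implications by building, respectively dismantling, a complete $\mathcal{PP}_C$-resolution as the splice of an ordinary projective resolution of $M$ with an augmented $\mathcal{P}_C$-coresolution of $M$; the whole argument is formal, using no special properties of $C$, and it is the projective counterpart of Lemma~\ref{131226.01} (one could alternatively dualize that proof). For the forward direction, assume $M$ is $C$-Gorenstein projective and choose a complete $\mathcal{PP}_C$-resolution $\ul X = \cdots \xra{\partial_2^X} X_1 \xra{\partial_1^X} X_0 \xra{\partial_0^X} X_{-1} \xra{\partial_{-1}^X} \cdots$ with $\coker\partial_1^X \cong M$; since $\ul X$ is exact, $M \cong \im\partial_0^X$ as well. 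First I would note that the left truncation $\cdots \to X_1 \to X_0 \to M \to 0$ is a genuine projective resolution of $M$, while the right truncation $0 \to M \to X_{-1} \to X_{-2} \to \cdots$ (using $M \cong \im\partial_0^X \hookrightarrow X_{-1}$) is an augmented $\mathcal{P}_C$-coresolution of $M$ in the sense of Definition~\ref{131212.02}. Then for $A \in \mathcal{P}_C$ the group $\ext_R^i(M,A)$ with $i \geq 1$ is computed as the cohomology of $\hom_R(\ul X,A)$ at the spot $\hom_R(X_i,A)$, which vanishes because $\hom_R(\ul X,A)$ is exact by Definition~\ref{131212.05}; that is (a). For (b), the same exactness of $\hom_R(\ul X,A)$, now read off in the non-positive degrees and at the spot $\hom_R(X_0,A)$, says precisely that applying $\hom_R(-,A)$ to the augmented $\mathcal{P}_C$-coresolution above yields an exact complex; the only point needing a short chase is surjectivity onto $\hom_R(M,A)$, obtained by pulling a map $M \to A$ back along $X_0 \onto M$ and observing that the resulting element of $\hom_R(X_0,A)$ is a cocycle in $\hom_R(\ul X,A)$, hence a coboundary.

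For the converse I would assume (a) and (b), pick any projective resolution $\cdots \to X_1 \to X_0 \to M \to 0$, and relabel the modules $Y_0, Y_{-1}, \dots$ of the augmented $\mathcal{P}_C$-coresolution ${}^+\ul Y$ supplied by (b) as $X_{-1}, X_{-2}, \dots$. Splicing the resolution and the coresolution along the composite $X_0 \onto M \hookrightarrow X_{-1}$ produces a complex $\ul X$ with $X_i \in \mathcal{P}$ for $i \geq 0$, $X_j \in \mathcal{P}_C$ for $j \leq -1$, and $\coker\partial_1^X \cong M$; exactness of $\ul X$ is immediate because $M \hookrightarrow X_{-1}$ is injective. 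It then remains to check that $\hom_R(\ul X,A)$ is exact for every $A \in \mathcal{P}_C$: in the positive degrees its cohomology is $\ext_R^i(M,A)=0$ by (a), and in the non-positive degrees together with the interface $\hom_R(X_0,A)$ it coincides term by term with the exactness of $\hom_R({}^+\ul Y,A)$ granted by (b)—in particular the vanishing of $\coker(\hom_R(Y_0,A) \to \hom_R(M,A))$ is exactly what disposes of the cohomology at $\hom_R(X_0,A)$. Finally, $M \cong \coker\partial_1^X$ by construction, so $\ul X$ witnesses that $M$ is $C$-Gorenstein projective.

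The main obstacle—really the only step that is not routine bookkeeping—is aligning the conditions at the splice $X_0 \onto M \hookrightarrow X_{-1}$: one must verify that exactness of $\hom_R(\ul X,A)$ at $\hom_R(X_0,A)$ and $\hom_R(X_{-1},A)$ corresponds exactly to the $i=1$ instance of (a) together with the portion of the exactness of $\hom_R({}^+\ul Y,A)$ adjacent to $\hom_R(M,A)$, and conversely. Once the indexing is pinned down this is a direct diagram chase, and everything else is the standard splicing of a resolution with a coresolution.
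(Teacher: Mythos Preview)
The paper states this lemma without proof (as it does the companion Lemmas~\ref{131226.01} and~\ref{131226.02}), treating it as a routine reformulation of Definition~\ref{131213.01} via Definition~\ref{131212.05}. Your argument is correct and is exactly the standard splicing verification one would expect: truncate a complete $\mathcal{PP}_C$-resolution at $M$ for the forward direction, and splice a projective resolution with the given $\mathcal{P}_C$-coresolution for the converse, checking in each case that the $\hom_R(-,A)$-exactness conditions match up across the splice. Your handling of the interface at degrees $0$ and $-1$ is accurate, including the observation that surjectivity of $\hom_R(X_{-1},A)\to\hom_R(M,A)$ is precisely the right-end exactness of $\hom_R({}^+\ul Y,A)$ and corresponds to exactness of $\hom_R(\ul X,A)$ at $\hom_R(X_0,A)$.
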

\begin{lem}\label{131226.02}
Let $C$ and $M$ be $R$-modules. Then $M$ is $C$-Gorenstein flat if and only if
\begin{enumerate}[\rm(a)]
\item For each $A \in \mathcal{I}_C$, $\tor^R_i(A,M)=0$ for all $i \geq 1$.
\item $M$ admits an augmented $\mathcal{F}_C$-coresolution $^+\ul Y$ such that $ A \tsr_R \prn{^+\ul Y}$  is exact for each $A \in \mathcal{I}_C$.
\end{enumerate}
\end{lem}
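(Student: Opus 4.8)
The plan is to mirror, almost verbatim, the arguments for the injective and projective analogues in Lemmas~\ref{131226.01} and~\ref{131213.03}, trading $\hom_R(-,-)$ for $-\tsr_R-$ and being slightly more careful because tensoring is only right exact. I will treat the two implications separately.

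($\Rightarrow$) Suppose $M$ is $C$-Gorenstein flat, witnessed by a \cffc\ $\ul X$ (as in Definition~\cref{131212.05}{03}) with $\coker\partial_1^X\cong M$. The ``left half'' $\cdots\to X_1\to X_0\to M\to 0$ is an exact sequence with each $X_i$ ($i\geq 0$) flat, hence a flat resolution of $M$. Since $\tor_i^R(A,M)$ is computed by this resolution, and in positive degrees its homology coincides with that of the exact complex $A\tsr_R\ul X$, condition (a) follows. For (b), truncating $\ul X$ identifies $M$ with $\im\partial_0^X$ and yields the exact sequence $0\to M\to X_{-1}\to X_{-2}\to\cdots$, which is an augmented $\mathcal{F}_C$-coresolution $^+\ul Y$ of $M$ because $X_j\in\mathcal{F}_C$ for $j\leq -1$. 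The one thing to check is that $A\tsr_R(^+\ul Y)$ remains exact: at the spots $A\tsr_R X_{-j}$ with $j\geq 2$ this is immediate from exactness of $A\tsr_R\ul X$; at $A\tsr_R X_{-1}$ and at $A\tsr_R M$ one notes that $A\tsr_R X_0\to A\tsr_R M$ is surjective and the map $A\tsr_R X_0\to A\tsr_R X_{-1}$ factors through it, so exactness of $A\tsr_R\ul X$ at $X_0$ and at $X_{-1}$ forces, respectively, injectivity of $A\tsr_R M\to A\tsr_R X_{-1}$ and equality of image and kernel there.

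($\Leftarrow$) Conversely, assume (a) and (b), with $^+\ul Y = 0\to M\to Y_0\to Y_{-1}\to\cdots$ the coresolution furnished by (b). Pick any flat resolution $\cdots\to F_1\to F_0\to M\to 0$ and splice it onto $^+\ul Y$ along $M$ to obtain an exact sequence
$$
\ul X=\cdots\to F_1\to F_0\to Y_0\to Y_{-1}\to\cdots
$$
with $F_i\in\mathcal{F}$ for $i\geq 0$, $Y_j\in\mathcal{F}_C$ for $j\leq 0$, and $\coker\partial_1^X\cong M$. It remains to show $A\tsr_R\ul X$ is exact for every $A\in\mathcal{I}_C$: in the $F$-part its homology is $\tor_i^R(A,M)$, which vanishes in positive degrees by (a); in the $Y$-part exactness is precisely the hypothesis of (b); and at the splicing degree one again combines surjectivity of $A\tsr_R F_0\to A\tsr_R M$ with injectivity of $A\tsr_R M\to A\tsr_R Y_0$ (the latter being part of the exactness asserted in (b)). Then $\ul X$ is a \cffc\ with $\coker\partial_1^X\cong M$, so $M$ is $C$-Gorenstein flat.

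The only step requiring genuine care — and the one where condition (b) is used in full, not merely through the existence of some $\mathcal{F}_C$-coresolution of $M$ — is the exactness of the tensored complex at the degree where $M$ is glued in; because $-\tsr_R-$ is only right exact, the needed left exactness there has to be extracted from the global exactness of $A\tsr_R\ul X$ (in the forward direction) or from condition (b) itself (in the converse). Everything else is routine bookkeeping with long exact sequences and the standard splicing of a resolution and a coresolution along $M$. I do not expect to need the evaluation isomorphisms of Lemmas~\ref{130617.31} and~\ref{140704.01} in this particular proof.
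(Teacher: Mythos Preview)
Your proof is correct. The paper states Lemma~\ref{131226.02} (and its companions Lemmas~\ref{131226.01} and~\ref{131213.03}) without proof, so there is no argument in the paper to compare against; your splicing argument is the standard one and handles the only delicate point---exactness of the tensored complex at the splicing degree, where right exactness of $\otimes$ must be supplemented by the injectivity of $A\tsr_R M\to A\tsr_R Y_0$ coming from condition~(b)---exactly as needed.
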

Using the fact that $\hom_R(C,I)$ and $I$ are $C$-Gorenstein injective over $R$, one can define a $C$-Gorenstein injective resolution for every homologically left-bounded $R$-complex $M$ \cite[Defintion 2.9]{holm-smarghd}. One can then define the \emph{$C$-Gorenstein injective dimension} of $M$: for a homologically left-bounded $R$-complex $M$, one has
$$
C\operatorname{-Gid}_R M:=\inf\left\{\sup\left\{i \in \mathbb{Z} \mid X_{-i} \neq 0\right\}\left| \begin{matrix}
\text{$X$ is a $C$-Gorenstein injective} \\ \text{resolution of $M$}
\end{matrix} \right.\right\}.
$$
In the case $C=R$, the $C$-Gorenstein injective dimension of $M$ is the \emph{Gorenstein injective dimension} of $M$, denoted $\operatorname{Gid}_R M$. In other words, one has $R\operatorname{-Gid}_R M = \operatorname{Gid}_R M$. The \emph{$C$-Gorenstein projective dimension}, the \emph{Gorenstein projective dimension}, the \emph{$C$-Gorenstein flat dimension}, and the \emph{Gorenstein flat dimension} of an $R$-module $M$, denoted respectively  $C\operatorname{-Gpd}_R M$, $\operatorname{Gpd}_R M$, $C\operatorname{-Gfd}_R M$, and $\operatorname{Gfd}_R M$, are defined similarly.

\section{Semidualizing Modules and Gorenstein Dimensions}

The main point of this section is to prove Theorem~\ref{140404.01} from the introduction.

%\subsection{General Setting}

\begin{property} \label{130619.01}
Let $R$ and $S$ be rings, and let $C$ be an $R$-module. Then the triple $(R,S,C)$ \emph{satisfies Property \ref{130619.01}} if there is a commutative diagram
$$
\xymatrix{
R \ar[r]^f \ar[rd]_{\id_R} & S \ar[d]^g \\
& R
}
$$
of ring homomorphisms with the identity map $\id_R$ on $R$ such that $\hom_R(S,C)\cong S$ as $S$-modules and $\ext_R^i(S,C)=0$ for all $i \geq 1$.
\end{property}

%\begin{Property} \label{130619.02}
%Let $R$ and $S$ be rings, and let $C$ be an $R$-module. Then the triple $(R,S,C)$ is said to satisfy \!\emph{Property \ref{130619.01}}\! if $(R,S,C)$ satisfies Property \ref{130619.01} and $\ext_R^i(S,C)=0$ for all $i \geq 1$.
%\end{Property}
\begin{remark}\label{140619.01}
We first note two immediate facts from the above retract diagram: $R$ is a finitely generated $S$-module and the triple $(R,R \ltimes C,C)$ satisfies Property~\ref{130619.01}. We also note that if a triple $(R,S,C)$ satisfies Property~\ref{130619.01}, then $\rhom_R(S,C)\simeq S$ in the derived category $\mathcal{D}(S)$. In other words, if $\ul I$ is an injective resolution of $C$ over $R$, then Property~\ref{130619.01} implies that $\hom_R(S,\ul I)$ is an injective resolution of the $S$-module $S$. 
\end{remark}

\begin{property} \label{130831.01}
Let $R$ and $S$ be rings, and let $C$ be an $R$-module. Then the triple $(R,S,C)$ \emph{satisfies Property~\ref{130831.01}} if it satisfies Property~\ref{130619.01} and $C \cong \ker g$ as $R$-modules.
\end{property}

\begin{disc}\label{140302.01}
We here note that if $(R,S,C)$ satisfies Property~\ref{130831.01}, it follows that  $S \cong R \oplus C$ as $R$-modules because the short exact sequence $0 \to \ker g \to S \xra{g} R \to 0$ splits.
%by the following diagram of split-exact sequences.
%$$
%\xymatrix{
%0 \ar[r] & C \ar@{^(->>}[d] \ar[r] & R \oplus C \ar@{^(-->>}[d] \ar[r] & R \ar[r] \ar@{=}[d] & 0 \\
%0 \ar[r] & \ker g  \ar[r] & S  \ar_-{g}[r] & R \ar@/_1pc/_-{f \circ \id_R^{-1}}[l]  \ar[r] & 0 
%}
%$$
\end{disc}
We next state and prove versions of several lemmas of Holm and J{\o}rgensen \cite{holm-smarghd, holm-cmhd} in the general setting of Properties \ref{130619.01} and \ref{130831.01}.

\begin{lem}\label{130617.26}
Let $R$ and $S$ be rings, and let $C$ be an $R$-module. If $(R,S,C)$ satisfies Property \ref{130619.01}, then the following statements hold:
\begin{enumerate}[\rm(a)]
\item \label{130617.26.01} For any $R$-module $M$, $\ext_S^i(M,S)\cong \ext_R^i(M,C)$ as $S$-modules for all $i\geq 0$.
\item \label{130617.26.02} For all $i \geq 1$, $\ext_S^i(R,S)=0$ and $\hom_S(R,S)\cong C$ as $S$-modules.
\end{enumerate}
\end{lem}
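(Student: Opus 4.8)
The plan is to deduce both parts from a single injective resolution of $C$ over $R$ together with the coinduction functor $\hom_R(S,-)$. First I would fix an injective resolution $\ul I$ of $C$ over $R$. Since restriction of scalars along $f$ is exact, its right adjoint $\hom_R(S,-)$ carries injective $R$-modules to injective $S$-modules; and since $\ext_R^i(S,C)=0$ for all $i\geq 1$, applying $\hom_R(S,-)$ to $\ul I$ yields an exact complex of $S$-modules. Combined with the isomorphism $\hom_R(S,C)\cong S$ of $S$-modules coming from Property~\ref{130619.01}, this shows that $\hom_R(S,\ul I)$ is an injective resolution of the $S$-module $S$; this is the observation recorded in Remark~\ref{140619.01}.

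For part (a), regard the $R$-module $M$ as an $S$-module via $g$, and note that since $g\circ f=\id_R$, restricting this $S$-module structure back along $f$ recovers the original $R$-module structure on $M$. Hom-tensor adjunction, applied via the identification $M\cong S\tsr_S M$, then gives a natural isomorphism of complexes of $S$-modules
$$
\hom_S\big(M,\hom_R(S,\ul I)\big)\cong\hom_R(M,\ul I).
$$
Passing to cohomology, the left-hand side computes $\ext_S^i(M,S)$ by the first paragraph, and the right-hand side computes $\ext_R^i(M,C)$; naturality of the adjunction renders the resulting isomorphism $\ext_S^i(M,S)\cong\ext_R^i(M,C)$ $S$-linear for every $i\geq 0$.

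Part (b) is then the special case $M=R$ of part (a): as $R$ is free of rank one over itself, $\ext_R^i(R,C)=0$ for all $i\geq 1$ and $\hom_R(R,C)\cong C$, so $\ext_S^i(R,S)=0$ for all $i\geq 1$ and $\hom_S(R,S)\cong C$ as $S$-modules. The step needing the most care is the bookkeeping of $S$-module structures: one must check that the $S$-action on $\ext_R^i(M,C)$ induced through $g$ on both arguments is exactly the one transported across the adjunction isomorphism, and that $\hom_R(S,\ul I)$ resolves the $S$-module $S$ rather than merely $\hom_R(S,C)$ — these modules agree, but the identification is what makes the cohomology computation come out as $\ext_S^i(-,S)$. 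Neither point is difficult; both amount to unwinding definitions, but a careless argument could go astray here.
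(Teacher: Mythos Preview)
Your proposal is correct and follows essentially the same route as the paper: the paper's proof also uses the injective resolution $\hom_R(S,\ul I)$ of $S$ over $S$ from Remark~\ref{140619.01} together with Hom-tensor adjointness, and then specializes to $M=R$ for part (b). Your write-up is actually more explicit than the paper's (which simply points to the analogous argument in \cite{holm-cmhd}), and your remarks about tracking the $S$-module structures through the adjunction are exactly the care that is needed.
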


\begin{proof}
(a) Argue as in \cite[Lemma 3.2 (ii)]{holm-cmhd} with the ring $S$ taking the place of the trivial extension $R \ltimes C$. The essential point is to use Hom-tensor adjointness with the injective resolution $\hom_R(S,\ul I)$ of $S$, as described in Remark~\ref{140619.01}.

(b) This is the special case of part (a) where $M=R$.
\end{proof}

The following is Theorem~\ref{140730.01} from the introduction.

\begin{thm}\label{130617.28}
Let $R$ and $S$ be rings, and let $C$ be a finitely generated $R$-module such that $(R,S,C)$ satisfies Property \ref{130619.01}. Then the following are equivalent:
\begin{enumerate}[(a)]
\item $C$ is semidualizing over $R$;
\item $R$ is Gorenstein projective over $S$ and $\ann_R(C)=0$; and
\item $C$ is Gorenstein projective over $S$ and $\ann_R(C)=0$.
\end{enumerate}
\end{thm}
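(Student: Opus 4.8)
The plan is to prove the cycle of implications (a) $\Rightarrow$ (b) $\Rightarrow$ (c) $\Rightarrow$ (a), using Lemma~\ref{130617.26} as the main bridge between the $R$-module structure on $C$ and the $S$-module structure on $R$ and $C$.

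\emph{(a) $\Rightarrow$ (b).} Assume $C$ is semidualizing over $R$. Then $\ann_R(C)=0$ is immediate, since $R\cong\hom_R(C,C)$ forces any element killing $C$ to kill the identity of this endomorphism ring. For the Gorenstein projectivity of $R$ over $S$, I would build a complete projective resolution of $R$ over $S$ by hand. Using Lemma~\ref{130617.26}\eqref{130617.26.02} we have $\hom_S(R,S)\cong C$ and $\ext_S^{\geq 1}(R,S)=0$, so dualizing a projective $S$-resolution $P_\bullet$ of $R$ into $S$ gives an exact $S$-complex; the key computation is that $\hom_S(\hom_S(P_\bullet,S),S)$ recovers $R$, which by Lemma~\ref{130617.26}\eqref{130617.26.01} reduces to the biduality statement $\hom_R(\hom_R(R,C),C)=\hom_R(C,C)\cong R$ together with vanishing of the relevant $\ext_R^i(-,C)$ — precisely the semidualizing hypothesis. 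Splicing $P_\bullet$ with the $S$-dual of a projective resolution of $C$ over $S$ (the module appearing as $\hom_S(R,S)$) yields the desired complete projective resolution with $R$ as a cycle.

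\emph{(b) $\Rightarrow$ (c).} From $0\to C\to S\xra{g} R\to 0$ (Property~\ref{130831.01}; note here we only have Property~\ref{130619.01}, so one must first observe that the splitting $f$ makes $S\cong R\oplus C$ as $S$-modules is \emph{not} available — instead use that $R$ being Gorenstein projective over $S$ and $S$ itself being projective over $S$, the class of Gorenstein projectives is closed under kernels of epimorphisms between its members in the appropriate sense, or more safely: $C\cong\hom_S(R,S)$ by Lemma~\ref{130617.26}\eqref{130617.26.02}, and the $S$-dual of a complete projective resolution of $R$ is a complete projective resolution of $\hom_S(R,S)=C$, using $\ext_S^{\geq1}(R,S)=0$). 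The annihilator condition is unchanged.

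\emph{(c) $\Rightarrow$ (a).} This is the substantive direction. Assume $C$ is Gorenstein projective over $S$ with $\ann_R(C)=0$. Fix a complete projective $S$-resolution $\ul Q$ with $C$ as a cokernel; applying $\hom_S(-,S)$ gives an exact complex, and by Lemma~\ref{130617.26}\eqref{130617.26.01} its cohomology computes $\ext_R^i(C,C)$ in positive degrees and $\hom_R(C,C)$ in degree zero — so exactness forces $\ext_R^i(C,C)=0$ for $i\geq1$ and identifies $\hom_R(C,C)$ with a specific $S$-submodule of $\hom_S(Q_0,S)$. To get $R\cong\hom_R(C,C)$, consider the natural homothety map $\chi:R\to\hom_R(C,C)$; it is injective exactly because $\ann_R(C)=0$. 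For surjectivity I would use the $S$-module structure: Lemma~\ref{130617.26}\eqref{130617.26.01} with $M=C$ gives $\hom_R(C,C)\cong\ext_S^0(C,S)=\hom_S(C,S)$ as $S$-modules, and the Gorenstein projectivity of $C$ over $S$ together with $\ext_S^{\geq1}(C,S)$-vanishing lets one identify $\hom_S(C,S)$ via the complete resolution; comparing ranks/lengths after reduction, or more intrinsically using that $C$ is finitely generated and the biduality map for Gorenstein projective $S$-modules is an isomorphism, forces $\chi$ to be onto. Finiteness of $C$ over $R$ is needed to know $\hom_R(C,C)$ is finitely generated and to invoke the usual faithfully-flat/local arguments.

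\emph{Main obstacle.} The delicate point is the surjectivity of the homothety $R\to\hom_R(C,C)$ in (c) $\Rightarrow$ (a): injectivity is cheap (it is exactly $\ann_R(C)=0$), but surjectivity requires genuinely transporting the $S$-side biduality isomorphism (valid for totally reflexive, i.e.\ Gorenstein projective, $S$-modules) back to an $R$-statement through Lemma~\ref{130617.26}, and keeping careful track of which $S$-module identifications are canonical. A secondary subtlety, flagged above, is that under Property~\ref{130619.01} alone one does not get $S\cong R\oplus C$ as $S$-modules, so every use of the "splitting" must instead be routed through $\hom_S(R,S)\cong C$ and the $\ext_S$-vanishing from Lemma~\ref{130617.26}\eqref{130617.26.02}.
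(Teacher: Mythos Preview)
Your cycle (a) $\Rightarrow$ (b) $\Rightarrow$ (c) $\Rightarrow$ (a) and your systematic use of Lemma~\ref{130617.26} match the paper's approach. For (a) $\Rightarrow$ (b) the paper is more economical: rather than splicing resolutions by hand, it verifies the \emph{totally reflexive} criterion for $R$ over $S$ (namely $\ext_S^{\geq 1}(R,S)=0$, $\ext_S^{\geq 1}(\hom_S(R,S),S)=0$, and biduality for $R$) and cites \cite[Proposition~2.2.2]{christensen-gd}; your hand-built complete resolution would accomplish the same thing but with more bookkeeping. For (b) $\Rightarrow$ (c) your argument via $C\cong\hom_S(R,S)$ and $S$-dualizing the complete resolution is exactly what the paper does (it phrases this as ``$\hom_S(-,S)$ preserves finitely generated Gorenstein projectives'').

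The real gap is in (c) $\Rightarrow$ (a), precisely where you flag it. You correctly extract $\ext_R^{\geq 1}(C,C)=0$ and the $S$-biduality isomorphism $C\cong\hom_S(\hom_S(C,S),S)$ from Gorenstein projectivity, and you correctly note that $\ann_R(C)=0$ gives injectivity of the homothety $\chi:R\to\hom_R(C,C)$. But neither of your proposed routes to \emph{surjectivity} works as stated: ``comparing ranks/lengths after reduction'' is not available over a general noetherian ring, and biduality of $C$ alone does not obviously force $\chi$ onto. The paper closes this by extracting one more consequence of total reflexivity that you omit: the vanishing $\ext_S^{\geq 1}(\hom_S(C,S),S)=0$, which via Lemma~\ref{130617.26} translates to $\ext_R^{\geq 1}(\hom_R(C,C),C)=0$. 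With this extra condition in hand, together with the biduality $C\xra{\cong}\hom_R(\hom_R(C,C),C)$ and $\ann_R(C)=0$, the paper cites \cite[Fact~1.1]{ssw-rc} to conclude that $C$ is semidualizing. So the missing ingredient is that second $\ext$-vanishing, and the surjectivity of $\chi$ is then packaged inside an external reflexivity result rather than proved directly.
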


\begin{proof}
To prove that (a) implies (b), we assume that $C$ is semidualizing over $R$. Using Lemma~\ref{130617.26}, we note that
$$
\ext_S^i(\hom_S(R,S),S)  \cong \ext_S^i(C,S) \cong \ext_R^i(C,C).
$$
This is equal to 0 for all $i \geq 1$ and isomorphic to $R$ when $i=0$ because $C$ is semidualizing over $R$. Again, using the Ext-vanishing from Lemma~\cref{130617.26}{02}, this means that $R$ is Gorenstein projective over $S$ by \cite[Proposition 2.2.2]{christensen-gd}. We also note that $\ann_R(C)$ is the kernel of the homothety map $\chi^R_C:R \to \hom_R(C,C)$, which is 0 because $C$ is semidualizing over $R$.

To prove that (b) implies (c), we recall that $\hom_S(-,S)$ preserves the class of finitely generated Gorenstein projective $S$-modules by \cite[Observation 1.1.7]{christensen-gd}. This proves the desired implication because $C \cong \hom_S(R,S)$ as $S$-modules by Lemma~\cref{130617.26}{02}.

To prove that (c) implies (a), we assume that $C$ is Gorenstein projective over $S$ and $\ann_R(C)=0$. Since $C$ is finitely generated over $R$, it is also finitely generated over $S$. Therefore, by \cite[Theorem 4.2.6]{christensen-gd}, we have
$$
\ext^i_S(C,S)=0=\ext^i_S(\hom_S(C,S),S)
$$
for all $i \geq 1$ and the biduality map
$$
\delta^S_C:C \to \hom_S(\hom_S(C,S),S)
$$
is an $S$-module isomorphism. Using Lemma~\ref{130617.26}, we have
$$
\ext^i_R(C,C)=0=\ext^i_R(\hom_R(C,C),C)
$$
for all $i \geq 1$ and the biduality map 
$$
\delta^S_C:C \to \hom_S(\hom_S(C,S),S)\cong \hom_R(\hom_R(C,C),C)
$$
is an $R$-module isomorphism. Since $\ann_R(C)=0$, it follows that $C$ is semidualizing over $R$ by \cite[Fact 1.1]{ssw-rc}.
\end{proof}

\begin{remark}\label{150114.01}
The assumption $\ann_R(C)=0$ is essential in Theorem~\ref{130617.28}; see \cite[Example 1.2]{ssw-rc}.
\end{remark}

\begin{lem}\label{130617.29}
Let $R$ and $S$ be rings, let $N$ be a finitely generated $R$-module, and let $C$ be a semidualizing $R$-module. If $(R,S,C)$ satisfies Property \ref{130619.01}, and if $N$ is Gorenstein projective as an $S$-module, then the module $\hom_R(N,I)$ is Gorenstein injective over $S$ for any injective $R$-module $I$.
\end{lem}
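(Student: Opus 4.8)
The plan is to produce a complete injective resolution of $\hom_R(N,I)$ over $S$ by applying $\hom_R(-,I)$ to a complete projective resolution of $N$ over $S$ whose terms are finitely generated. Since $N$ is a finitely generated $R$-module and acquires its $S$-module structure by restriction of scalars along $g$, it is finitely generated over $S$ as well, and $S$ is noetherian because it is module-finite over $R$. A finitely generated Gorenstein projective module over a noetherian ring is totally reflexive, so one may choose a complete projective resolution
$$
\ul P=\cdots\to P_1\xra{\partial_1^P}P_0\xra{\partial_0^P}P_{-1}\to\cdots
$$
of $S$-modules in which every $P_i$ is \emph{finitely generated} projective over $S$, with $N\cong\coker\partial_1^P$. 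Arranging for finitely generated terms at this first step is essential, as it is what makes the later identifications of complexes work.

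Next I would apply the exact functor $\hom_R(-,I)$ (exact because $I$ is injective over $R$) to the acyclic complex $\ul P$, obtaining an acyclic complex $\hom_R(\ul P,I)$ of $S$-modules in which $\hom_R(N,I)\cong\ker\bigl(\hom_R(P_0,I)\to\hom_R(P_1,I)\bigr)$, read off from $P_1\to P_0\to N\to 0$. Each $\hom_R(P_i,I)$ is a direct summand of a finite power of $\hom_R(S,I)$, and $\hom_R(S,I)$ is injective over $S$ since $\hom_S(-,\hom_R(S,I))\cong\hom_R(-,I)$ is exact; so, after reindexing so that $\hom_R(N,I)=\ker\partial_0^X$, the complex $\ul X:=\hom_R(\ul P,I)$ is an acyclic complex of injective $S$-modules with $\ker\partial_0^X\cong\hom_R(N,I)$. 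By the definition of Gorenstein injectivity over $S$ (Definition~\ref{131213.01}(a), taken over the ring $S$ with $C=S$), it then remains only to check that $\hom_S(E,\ul X)$ is exact for every injective $S$-module $E$.

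For that I would invoke Lemma~\ref{130617.25}(a) to reduce to $E=\hom_R(S,I')$ with $I'$ injective over $R$, since a direct summand of an acyclic complex is acyclic. Set $\ul P^*:=\hom_S(\ul P,S)$; because $\ul P$ is a complete projective resolution, $\ul P^*$ is an acyclic complex of finitely generated projective $S$-modules. Using the adjunction $\hom_R(P_i,I)\cong\hom_S(P_i,\hom_R(S,I))$ together with the natural isomorphism $\hom_S(P_i,\hom_R(S,I))\cong P_i^*\tsr_S\hom_R(S,I)$ for finitely generated projective $P_i$, I would identify $\hom_R(\ul P,I)\cong\hom_R(S,I)\tsr_S\ul P^*$; then pulling the finitely generated projective modules out of $\hom_S(\hom_R(S,I'),-)$ and applying adjunction together with the Hom-evaluation isomorphism of Lemma~\ref{130617.31} (available since $S$ is module-finite over $R$ and $I$ is injective over $R$) gives
$$
\hom_S(\hom_R(S,I'),\hom_R(\ul P,I))\cong\bigl(S\tsr_R\hom_R(I',I)\bigr)\tsr_S\ul P^*\cong\hom_R(I',I)\tsr_R\ul P^*.
$$
Since $\hom_R(I',I)$ is flat over $R$ --- the standard fact that the $\hom$ of two injective modules over a noetherian ring is flat --- tensoring it against the acyclic complex $\ul P^*$ of $R$-modules preserves acyclicity, finishing the verification.

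The main difficulty I expect is organizational rather than conceptual: I must check that the chain of adjunction and evaluation isomorphisms is natural in $\ul P$, so that the pointwise isomorphisms assemble into isomorphisms of complexes, and I must keep the reindexings straight (three contravariant functors occur --- $\hom_R(-,I)$, $\hom_S(-,S)$, and contravariant Hom into a fixed object) so that $\hom_R(N,I)$ lands exactly as $\ker\partial_0^X$. I should also confirm the external inputs: that a finitely generated Gorenstein projective module over a noetherian ring admits a complete resolution by finitely generated projectives (the coincidence of finitely generated Gorenstein projective and totally reflexive modules), that $S$ is module-finite over $R$ so that Lemma~\ref{130617.31} applies, and that $\hom_R(I',I)$ is flat over $R$ for injective $I,I'$; the first and third are classical but should be cited in the precise form used.
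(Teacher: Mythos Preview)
Your approach is essentially the paper's: take a complete projective resolution $\ul P$ of $N$ over $S$ with finitely generated terms, and show that $\hom_R(\ul P,I)\cong\hom_S(\ul P,\hom_R(S,I))$ is a complete injective resolution of $\hom_R(N,I)$ over $S$. The paper simply cites \cite[Lemma~3.3(ii)]{holm-cmhd} for the verification that $\hom_S(E,-)$ stays exact, whereas you spell it out via the identification $\hom_S(\hom_R(S,I'),\hom_R(\ul P,I))\cong\hom_R(I',I)\tsr_R\ul P^*$ using Lemma~\ref{130617.31} and Ishikawa's flatness result; this is exactly the kind of argument the citation is pointing to.

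One small caveat: you assert that $S$ is module-finite over $R$, but Property~\ref{130619.01} alone does not obviously force this---it is Property~\ref{130831.01} (where $S\cong R\oplus C$) that guarantees it. The paper's proof has the same implicit reliance, since the cited results from \cite{christensen-gd} require $S$ noetherian, and the lemma is only ever applied downstream under Property~\ref{130831.01}; so this is not a defect of your argument relative to the paper's, but you should flag the hypothesis rather than claim it follows.
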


\begin{proof}
%This proof is essentially the same as the proof of \cite[Lemma 3.3 (ii)]{holm-cmhd}.
Since $N$ is Gorenstein projective over $S$, the module $N$ has a complete projective resolution $\ul P$ over $S$. Moreover, since $N$ is finitely generated over $R$ (hence over $S$ as well) $\ul P$ can be chosen to consist of finitely generated $S$-modules by \cite[Theorems 4.1.4 and 4.2.6]{christensen-gd}. As in the proof of \cite[Lemma 3.3 (ii)]{holm-cmhd}, it is straightforward to show that $\hom_S(\ul P,\hom_R(S,I))$ is a complete injective resolution of $\hom_R(N,I)$ over $S$.
\end{proof}

We here recover a version of \cite[Lemma 3.3 (ii)]{holm-cmhd} for our general setting.
\begin{prop}\label{130819.01}
Let $R$ and $S$ be rings, and let $C$ be a semidualizing $R$-module. If $(R,S,C)$ satisfies Property~\ref{130619.01}, then for any injective $R$-module $I$, the modules $\hom_R(C,I)$ and $\hom_R(R,I)\cong I$ are Gorenstein injective over $S$.
\end{prop}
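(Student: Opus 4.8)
The plan is to deduce this directly from Lemma~\ref{130617.29}, applied to the two finitely generated $R$-modules $N = C$ and $N = R$. For that I need to know that each of these is Gorenstein projective as an $S$-module, and this is precisely what Theorem~\ref{130617.28} supplies, once its hypotheses are checked.

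First I would record the two facts needed to invoke Theorem~\ref{130617.28} and Lemma~\ref{130617.29}: that $C$ is finitely generated over $R$ (this is built into the definition of a semidualizing module), and that $\ann_R(C) = 0$. The latter holds because the homothety map $\chi^R_C : R \to \hom_R(C,C)$ is an isomorphism when $C$ is semidualizing, hence in particular injective, and its kernel is exactly $\ann_R(C)$. Thus condition (a) of Theorem~\ref{130617.28} holds, so conditions (b) and (c) hold as well: $R$ is Gorenstein projective over $S$, and $C$ is Gorenstein projective over $S$.

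Now I would apply Lemma~\ref{130617.29} twice. Taking $N = R$, which is trivially finitely generated over $R$ and is Gorenstein projective over $S$ by the previous step, the lemma yields that $\hom_R(R,I) \cong I$ is Gorenstein injective over $S$ for every injective $R$-module $I$. Taking $N = C$, which is finitely generated over $R$ and Gorenstein projective over $S$, the lemma yields that $\hom_R(C,I)$ is Gorenstein injective over $S$. Together these give the proposition.

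Since the substantive work is already packaged inside Theorem~\ref{130617.28} (the semidualizing $\Leftrightarrow$ Gorenstein projective over $S$ equivalence) and Lemma~\ref{130617.29} (the passage from Gorenstein projective over $S$ to Gorenstein injective over $S$ via $\hom_R(-,I)$), I do not expect a genuine obstacle here. The only point requiring care is the finite generation hypothesis on $N$ in Lemma~\ref{130617.29}: this is exactly why the argument must use the full semidualizing hypothesis — both to see that $C$ is finitely generated and to be able to invoke Theorem~\ref{130617.28} — rather than Property~\ref{130619.01} alone.
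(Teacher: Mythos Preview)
Your proposal is correct and follows exactly the same route as the paper: invoke Theorem~\ref{130617.28} to conclude that $R$ and $C$ are Gorenstein projective over $S$, then apply Lemma~\ref{130617.29} with $N=R$ and $N=C$. The only difference is that you spell out explicitly why $\ann_R(C)=0$, which the paper leaves implicit.
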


\begin{proof}
The modules $C$ and $R$ are Gorenstein projective over $S$ by Theorem~\ref{130617.28}. Thus, the duals $\hom_R(C,I)$ and $\hom_R(R,I)\cong I$ are Gorenstein injective over $S$ by Lemma~\ref{130617.29}.
%Since $C$ is semidualizing over $R$, we know that $C$ is finitely generated over $R$ by definition. By Lemma~\ref{130617.28}, $C$ is Gorenstein injective over $S$. Therefore, by Lemma~\ref{130617.29}, the module $\hom_R(C,I)$ is Gorenstein injective over $S$ for any $I$ injective over $R$.
%
%Similarly, since the ring $R$ is finitely generated as an $R$-module and $R$ is also Gorenstein injective over $S$ by Lemma~\ref{130617.28}, we have by Lemma~\ref{130617.29} a Gorenstein injective $S$-module $\hom_R(R,I)\cong I$ for any $I$ injective over $R$.
\end{proof}

Next we prove a version of \cite[Lemma 3.4]{holm-cmhd} in the general setting. %The proof is essentially the same, but we include the full proof below just as an example of the applications of our assumptions in Properties \ref{130619.01} and \ref{130831.01}.

\begin{lem}\label{130617.32}
Let $R$ and $S$ be rings, and let $C$ be a semidualizing $R$-module. If $(R,S,C)$ satisfies Property~\ref{130831.01}, then for any injective $R$-module $J$, we have
$$
\ext^i_{S}(\hom_R(S,J),-)\cong \ext^i_R(\hom_R(C,J),-)
$$
for $i\geq 0$ as functors on $S$-modules.
\end{lem}

\begin{proof}
Argue as in the proof of \cite[Lemma 3.4]{holm-cmhd} that
$$
\hom_R(S,J) \cong \hom_R(\hom_R(S,C),J) \cong S \tsr_R \hom_R(C,J)
$$
as $S$-modules using Lemma~\ref{130617.31} and the fact that $(R,S,C)$ satisfies Property~\ref{130831.01} (hence Property~\ref{130619.01}). If $\ul P$ is a projective resolution over $R$ of $\hom_R(C,J)$, one can argue that $S \tsr_R \ul P$ is a projective resolution over $S$ of $S \tsr_R \hom_R(C,J) \cong \hom_R(S,J)$. This uses the facts that $S \cong R \oplus C$ as $R$-modules and $\hom_R(C,J)$ belongs to $\mathcal{A}_C(R)$ since $J$ is an injective $R$-module. Using this projective resolution over $S$ of $\hom_R(S,J)$ and Hom-tensor adjointness, one can obtain the desired isomorphism.
\end{proof}

As a consequence of the above lemma, we have the following proposition.

\begin{prop}\label{130904.01}
Let $R$ and $S$ be rings, and let $C$ be a semidualizing $R$-module. If $(R,S,C)$ satisfies Property~\ref{130831.01} and $M$ is an $R$-module, then for each $i \geq 0$, we have $\ext_R^i(\hom_R(C,J),M)=0$ for every injective $R$-module $J$ if and only if $\ext_S^i(U,M)=0$ for every injective $S$-module $U$.
\end{prop}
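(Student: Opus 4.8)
The plan is to deduce this proposition directly from Lemma~\ref{130617.32} together with the structural description of injective $S$-modules furnished by Lemma~\cref{130617.25}{03}. The key observation is that both conditions in the statement are ``additive'' and ``summand-closed'' in the relevant module argument, so it suffices to test them on a generating family.

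First I would handle the forward direction. Assume $\ext_R^i(\hom_R(C,J),M)=0$ for every injective $R$-module $J$. Let $U$ be an arbitrary injective $S$-module. By Lemma~\cref{130617.25}{03}, $U$ is a direct summand of $\hom_R(S,J_0)$ for some injective $R$-module $J_0$, say $\hom_R(S,J_0)\cong U\oplus U'$ as $S$-modules. Then
\begin{align*}
\ext_S^i(U,M)\oplus\ext_S^i(U',M)&\cong\ext_S^i(\hom_R(S,J_0),M)\\
&\cong\ext_R^i(\hom_R(C,J_0),M),
\end{align*}
where the last isomorphism is Lemma~\ref{130617.32} (valid since $(R,S,C)$ satisfies Property~\ref{130831.01}). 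By hypothesis the right-hand side vanishes, hence so does the direct summand $\ext_S^i(U,M)$, as desired.

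For the converse, assume $\ext_S^i(U,M)=0$ for every injective $S$-module $U$. Let $J$ be an arbitrary injective $R$-module. The $S$-module $\hom_R(S,J)$ is injective over $S$: indeed $\hom_R(S,-)$ is right adjoint to the exact restriction functor, so it sends the injective $R$-module $J$ to an injective $S$-module. (Alternatively, one may invoke Remark~\ref{140619.01}, which records that $\hom_R(S,\ul I)$ is an injective resolution of $S$ over $S$ whenever $\ul I$ is one of $C$ over $R$; the same argument with $J$ in place of $C$ shows $\hom_R(S,J)$ is injective over $S$.) Therefore $\ext_S^i(\hom_R(S,J),M)=0$ by hypothesis, and applying Lemma~\ref{130617.32} once more gives $\ext_R^i(\hom_R(C,J),M)\cong\ext_S^i(\hom_R(S,J),M)=0$. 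This completes the proof.

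I do not anticipate a serious obstacle here: the content is entirely in Lemma~\ref{130617.32}, and the remaining work is the bookkeeping of passing between a generating class of injectives and arbitrary injectives via direct summands. The one point that warrants a line of care is the claim that $\hom_R(S,J)$ is injective over $S$ for every injective $R$-module $J$ — this is exactly the adjunction/Remark~\ref{140619.01} argument — and, symmetrically, that $\ext^i$ commutes with the finite direct sum decomposition $\hom_R(S,J_0)\cong U\oplus U'$ in the first argument, which is standard.
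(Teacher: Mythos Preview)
Your proof is correct and follows precisely the route the paper takes: the paper's proof simply cites Lemmas~\cref{130617.25}{03} and~\ref{130617.32} (as in \cite[Corollary 2.3 (1)]{holm-smarghd}), and your argument is exactly the spelled-out version of how those two lemmas combine. One tiny remark: the adjunction argument for why $\hom_R(S,J)$ is injective over $S$ is the right one; your alternative appeal to Remark~\ref{140619.01} is circular, since that remark already relies on this adjunction fact.
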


\begin{proof}
As in \cite[Corollary 2.3 (1)]{holm-smarghd}, this follows from Lemmas~\cref{130617.25}{03} and \ref{130617.32}.
%Assuming that $\ext_R^i(\hom_R(C,J),M)=0$ for all $J$ injective over $R$, consider $\ext_S^i(U,M)$ for an arbitrary $U$ injective over $S$. By Lemma~\ref{130617.25}{03}, $U$ is a direct summand in $\hom_R(S,I)$ for some $I$ injective over $R$. Therefore, $\ext_S^i(U,M)$ also is a direct summand in $\ext_S^i(\hom_R(S,I),M)$, which vanishes by assumption because $I$ is injective over $R$. Therefore $\ext_S^i(U,M)$ also vanishes.
%
%Assuming that $\ext_S^i(U,M)=0$ for all $U$ injective over $S$, consider the module $\ext_R^i(\hom_R(C,J),M)$ for an arbitrary $J$ injective over $R$. Since $(R,S,C)$ is assumed to satisfy Property~\ref{130619.01}, and $C$ is assumed to be semidualizing over $R$,  Lemma~\ref{130617.32} is applicable, hence we have
%$$
%\ext_R^i(\hom_R(C,J),M) \cong \ext_S^i(\hom_R(S,J),M).
%$$
%Since $J$ is injective over $R$, the module $\hom_R(S,J)$ is by Lemma~\cref{130617.25}{01} injective over $S$ . Therefore, by assumption,
%$$
%\ext_R^i(\hom_R(C,J),M) \cong \ext_S^i(\hom_R(S,J),M)=0
%$$
%completing the proof.
\end{proof}

%\begin{lem}\label{130619.03}
%Let $R$ and $S$ be rings, let $E$ be an ideal in $R$, and let $M$ be an $R$-module. If $(R,S,E)$ satisfies Property \ref{130619.01}, $E$ is semidualizing as an $R$-module, and $M$ is Gorenstein injective over $S$, then there exists a short exact sequence of $R$-modules
%$$
%0 \to M' \to \hom_R(E,I) \to M \to 0
%$$
%for some injective $R$-module $I$. Moreover $M'$ is Gorenstein injective over $S$ and the above sequence is $\hom_R(\hom_R(E,J),-)$-exact for any injective $R$-module $J$. 
%\end{lem}

%\begin{prop}
%Let $R$ and $S$ be rings, and let $E$ be an ideal in $R$ such that $(R,S,E)$ satisfies Property \ref{130619.01} and $E$ is semidualizing as an $R$-module. Then, for any $R$-module $M$, we claim that $M$ is $E$-Gorenstein injective as an $R$-module if and only if $M$ is Gorenstein injective as an $S$-module.
%\end{prop}

\begin{lem}\label{130808.01}
Let $R$ and $S$ be rings, and let $C$ be a semidualizing $R$-module. If the triple $(R,S,C)$ satisfies Property~\ref{130831.01} and $M$ is an $R$-module that is Gorenstein injective over $S$, then there exists a short exact sequence of $R$-modules
$$0 \to M' \to \hom_R(C,I) \to M \to 0$$
for some injective $R$-module $I$ such that
\begin{enumerate}
\item $M'$ is Gorenstein injective over $S$
\item the above sequence is $\hom_R(\hom_R(C,J),-)$-exact for any injective $R$-module $J$.
\end{enumerate}
\end{lem}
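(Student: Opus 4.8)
The plan is to mimic the argument behind Holm–Jørgensen's analogous statement (the module analog of \cite[Lemma 3.5]{holm-cmhd} or the construction feeding into \cite[Definition 2.9]{holm-smarghd}), adapted to our retract setting. Since $M$ is Gorenstein injective over $S$, by Lemma~\ref{131226.01}-type reasoning (or directly by definition) it is a cosyzygy in a complete injective resolution $\ul U$ of $S$-modules; equivalently, there is an exact sequence $0 \to M \to U^0 \to U^1 \to \cdots$ of $S$-modules with each $U^j$ an injective $S$-module, which stays exact after applying $\hom_S(V,-)$ for every injective $S$-module $V$, and moreover $M$ itself has an injective $S$-resolution on the ``other side''. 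What I actually need is just the first step of a $\mathcal{I}_C$-coresolution over $R$ together with the exactness condition; so the strategy is to extract a single short exact sequence and then verify the two bulleted properties.

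First I would use Lemma~\cref{130617.25}{03}: the injective $S$-module $U^0$ is a direct summand of $\hom_R(S,I_0)$ for some injective $R$-module $I_0$. Composing $M \hookrightarrow U^0 \hookrightarrow \hom_R(S,I_0)$ gives an $S$-monomorphism $M \to \hom_R(S,I_0)$. Now by Remark~\ref{140619.01} together with Property~\ref{130831.01}, $\hom_R(S,I_0) \cong S \tsr_R \hom_R(C,I_0)$ as $S$-modules (this is exactly the computation in the proof of Lemma~\ref{130617.32}); and since $S \cong R \oplus C$ as $R$-modules, restricting scalars along $f$ gives $\hom_R(S,I_0)\cong \hom_R(C,I_0) \oplus (C\tsr_R \hom_R(C,I_0))$ — the point being that $\hom_R(C,I_0)$ is a direct summand, over $R$, of the $S$-module $\hom_R(S,I_0)$. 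Actually the cleaner route: I claim there is an $S$-module surjection, and hence after a summand argument an honest short exact sequence over $R$, relating $\hom_R(C,I)$ to $M$. Dually to the projective case, I would apply $\hom_R(-,I)$-type functoriality; concretely, $M$ being Gorenstein injective over $S$ also means $M$ is a kernel $\ker(U_0 \to U_{-1})$ in the complete resolution, giving a surjection from an injective $S$-module $U_0 \onto M$... but injective modules do not surject, so instead I want the \emph{coresolution} direction. Let me restate: from $0 \to M \to U^0$ with $U^0$ injective over $S$, and $U^0$ a summand of $\hom_R(S,I) \cong S\tsr_R\hom_R(C,I)$, I get an embedding of $M$ into a module having $\hom_R(C,I)$ as an $R$-summand, and by Proposition~\ref{130819.01} the module $\hom_R(C,I)$ is itself Gorenstein injective over $S$. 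The needed short exact sequence $0 \to M' \to \hom_R(C,I) \to M \to 0$ should come from the fact that $\hom_R(C,I)$ is a generator-like object for $\mathcal{I}_C$; I would construct the surjection $\hom_R(C,I)\onto M$ using that $M$, being Gorenstein injective over $S$, admits a surjection from a Gorenstein injective of the form $\hom_R(C,I)$ — precisely, dualize the statement that a Gorenstein projective $S$-module $N$ admits $P' \to N$ with $P'$ of the form $S\tsr_R P$ and the sequence appropriately exact (the module version of Lemma~\ref{131213.03}(b) read backwards), then hom into an injective.

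Then I would verify the two properties. For (1), set $M' = \ker(\hom_R(C,I)\onto M)$; since both $M$ and $\hom_R(C,I)$ are Gorenstein injective over $S$, and the class of Gorenstein injective $S$-modules is closed under kernels of epimorphisms \emph{between its members provided the relevant Ext vanishes} — which holds here because the sequence is built to be $\hom_S(V,-)$-exact for injective $S$-modules $V$ — we get $M'$ Gorenstein injective over $S$ by the standard ``horseshoe/resolution'' argument (e.g. \cite[Theorem 2.6]{holm-ghd}, or assembling complete injective resolutions of $M$ and $\hom_R(C,I)$ into one for $M'$). For (2), I must check $\hom_R(\hom_R(C,J),-)$-exactness for every injective $R$-module $J$. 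By Proposition~\ref{130904.01}, $\ext^{\geq 1}_R(\hom_R(C,J),-)$ vanishing on the relevant modules is equivalent to $\ext^{\geq 1}_S(U,-)$ vanishing for injective $S$-modules $U$, and the latter holds because $M$ and $\hom_R(C,I)$ (hence $M'$) are Gorenstein injective over $S$ and the complete injective resolution over $S$ is $\hom_S(U,-)$-exact; translating back via Lemma~\ref{130617.32} and Lemma~\cref{130617.25}{03} gives the claimed $\hom_R(\hom_R(C,J),-)$-exactness.

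\textbf{The main obstacle} I anticipate is producing the surjection $\hom_R(C,I)\onto M$ with the right exactness built in — getting from ``$M$ is a cosyzygy over $S$'' (which naturally gives embeddings of $M$ into modules like $\hom_R(S,I)$) to a \emph{surjection onto} $M$ from an object of $\mathcal{I}_C$, keeping track of which maps are $S$-linear versus merely $R$-linear, and ensuring the resulting short exact sequence is simultaneously Gorenstein injective in the middle term, has Gorenstein injective kernel, and is $\hom_R(\hom_R(C,J),-)$-exact. This is exactly the content of the dual of Lemma~\ref{131213.03}(b), so I expect to invoke that (or its proof) combined with Lemma~\cref{130617.25}{03} and the isomorphism $\hom_R(S,I)\cong S\tsr_R\hom_R(C,I)$ from Lemma~\ref{130617.32}; the rest is bookkeeping via Propositions~\ref{130819.01} and \ref{130904.01}.
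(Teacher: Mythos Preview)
Your proposal has the right list of ingredients but a genuine gap at the point you yourself flag as ``the main obstacle,'' and it is compounded by starting on the wrong side of the complete injective resolution.

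First, the direction. A Gorenstein injective $S$-module $M$ sits as $\ker(X_0\to X_{-1})=\operatorname{Im}(X_1\to X_0)$ in its complete injective resolution, so the natural short exact sequence you want is $0\to N\to X_1\to M\to 0$ with $X_1$ injective over $S$ and $N$ Gorenstein injective over $S$; this already gives a \emph{surjection} from an injective $S$-module onto $M$, and it is automatically $\hom_S(L,-)$-exact for injective $S$-modules $L$. Your discussion around ``$0\to M\to U^0$'' is the embedding side and does not produce what is needed; your later remark ``injective modules do not surject'' suggests you lost track of which half of the complete resolution to use.

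Second, and this is the real gap: once you replace $X_1$ by $\hom_R(S,I)$ via Lemma~\ref{130617.25}(\ref{130617.25.03}) and identify $\hom_R(S,I)\cong S\tsr_R\hom_R(C,I)$ via Lemma~\ref{130617.31}, you still need a concrete map $\hom_R(C,I)\to M$. The paper does not ``dualize'' anything abstract here; it builds a commutative diagram
\[
\xymatrix{
0 \ar[r] & N \ar[r]^-{\epsilon'} \ar[d] & S\tsr_R \hom_R(C,I) \ar[r]^-{\eta'} \ar[d]^-{\psi} & M \ar@{=}[d] \ar[r] & 0\\
0 \ar[r] & M' \ar[r] & \hom_R(C,I) \ar[r]^-{\phi} & M \ar[r] & 0
}
\]
where $\psi(s\tsr\beta)=s\beta$ uses the $S$-module structure on $\hom_R(C,I)$ (via the retraction $g:S\to R$) and $\phi(\beta)=\eta'(1_S\tsr\beta)$. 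Surjectivity of $\phi$ follows from commutativity because $\psi$ is onto, and properties (1) and (2) are then read off from the corresponding properties of the top row together with Proposition~\ref{130819.01} and Proposition~\ref{130904.01}. Your proposal never produces this map $\phi$; vague appeals to ``generator-like'' behavior of $\hom_R(C,I)$ or to dualizing Lemma~\ref{131213.03}(b) do not supply it, and without it the argument does not close.
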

\begin{proof}
The proof begins similarly to that of \cite[Lemma 4.1]{holm-cmhd}.

Since $M$ is Gorenstein injective over $S$, it has a complete injective resolution. From this, we can construct the following short exact sequence of $S$-modules
$$
0 \to N \to K \to M \to 0
$$
where $K$ is injective over $S$, $N$ is Gorenstein injective over $S$ and the sequence is $\hom_S(L,-)$-exact for each injective $S$-module $L$, particularly for $L=\hom_R(S,J)$ where $J$ is any injective $R$-module.

%Since $K$ is injective over $S$, there exists an $S$-module $K'$ such that $K \oplus K' \cong \hom_R(S,I)$ as $S$-modules for some injective $R$-module $I$ by Lemma~\cref{130617.25}{03}. Therefore, we have a short exact sequence\
As in the proof of \cite[Lemma 4.1]{holm-cmhd}, we can use Lemma~\cref{130617.25}{03} to assume without loss of generality that the above sequence is of the form
\begin{equation}\label{140501.01}
0 \to N \xra{\epsilon} \hom_R(S,I) \xra{\eta} M \to 0
\end{equation}
for some injective $R$-module $I$.
%of $S$-modules, where $N':=N \oplus K'$. Since $\hom_R(S,I)$ is injective over $S$, $K'$ is injective over $S$, hence Gorenstein injective over $S$. Therefore $N$ is also Gorenstein injective over $s$ and so is $N'$. We also note that the above sequence is also $\hom_S(L,-)$-exact.

Note that we cannot make use of a specific ring structure of $S$ as in the proof of \cite[Lemma 4.1]{holm-cmhd}, so we use Lemma~\ref{130617.31} instead. Since $S \cong \hom_R(S,C)$ as $S$-modules by Property~\ref{130619.01}, we have
\begin{equation}\label{150114.02}
\hom_R(S,I) \cong \hom_R(\hom_R(S,C),I) \cong S \tsr_R \hom_R(C,I)
\end{equation}
as $S$-modules, where the second isomorphism is by Lemma~\ref{130617.31}. (We note that Lemma~\ref{130617.31} is applicable here because $I$ is injective over $R$, and $S$ is finitely generated over $R$.) The isomorphisms in \eqref{150114.02} enable us to replace $\hom_R(S,I)$ in \eqref{140501.01} with $S\tsr_R \hom_R(C,I)$ to obtain the top row of the following diagram.
\begin{equation}\label{140501.02}
\begin{split}
\xymatrix{
0 \ar[r] & N \ar^-{\epsilon'}[r] \ar^-{\psi \circ \epsilon'}[d] & S \tsr_R \hom_R(C,I) \ar^-{\eta'}[r] \ar^-{\psi}[d] & M \ar[r] \ar@{=}[d] & 0 \\
0 \ar[r] & M':=\ker \phi \ar@{^(->}[r] & \hom_R(C,I) \ar^-{\phi}[r] & M \ar[r] & 0 
}
\end{split}
\end{equation}
The maps $\psi$ and $\phi$ are defined as follows. For any $s \tsr_R \beta \in S \tsr_R \hom_R(C,I)$, set $\psi(s \tsr_R \beta):=s\beta$, where the scalar multiplication is afforded by the $S$-module structure on the $R$-module $\hom_R(C,I)$. For any $\beta$ in $\hom_R(C,I)$, set $\phi(\beta):=\eta'(1_S \tsr_R \beta)$. It is routine to check that both $\psi$ and $\phi$ are well-defined $S$-module homomorphisms and that the diagram \eqref{140501.02} is commutative.

As in \cite[Lemma 4.1]{holm-cmhd}, we can show that the bottom row of the diagram \eqref{140501.02} satisfies the desired properties.
\end{proof}

\begin{lem}\label{130808.02} 
Let $R$ and $S$ be rings, and let $C$ be an $R$-module such that $(R,S,C)$ satisfies Property \ref{130831.01}. Let $M$ be an $R$-module that is $C$-Gorenstein injective over $R$. Then there exists a short exact sequence of $S$-modules 
$$
0 \to M' \to U \to M \to 0
$$
where $U$ is injective over $S$, $M'$ is $C$-Gorenstein injective over $R$ and the above sequence is $\hom_S(V,-)$-exact for any $V$ injective over $S$.
\end{lem}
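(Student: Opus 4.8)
The plan is to transport one step of a complete $\mathcal{I}_C\mathcal{I}$-resolution of $M$ up from $R$ to $S$ along $f$, imitating the proof of \cite[Lemma~4.1]{holm-cmhd} and of Lemma~\ref{130808.01} but with the roles of $R$ and $S$ interchanged. First I would use that $M$ is $C$-Gorenstein injective over $R$ to extract a short exact sequence of $R$-modules
$$
0 \to K \to \hom_R(C,I) \xra{\pi} M \to 0,
$$
in which $I$ is an injective $R$-module and $K$ is again $C$-Gorenstein injective over $R$: choose a complete $\mathcal{I}_C\mathcal{I}$-resolution $\ul X$ of $M$ as in Definitions~\ref{131213.01} and~\cref{131212.05}{02} with $\ker\partial_0^X\cong M$, let $\pi$ be the surjection $X_1\onto\im\partial_1^X = M$ (recall $X_1\in\mathcal{I}_C$, so $X_1\cong\hom_R(C,I)$ for some injective $R$-module $I$), and set $K:=\ker\partial_1^X$; that $K$ is $C$-Gorenstein injective over $R$ follows by dimension shifting along $\ul X$ using the Ext-orthogonality $\ext_R^i(A,B)=0$ for $i\geq1$, $A,B\in\mathcal{I}_C$, a standard consequence of Foxby equivalence.

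Next I would push this step to $S$. Put $U:=\hom_R(S,I)$. Restriction of scalars along $f$ is exact, so its right adjoint $\hom_R(S,-)$ preserves injectivity and $U$ is injective over $S$. Since $\hom_R(S,C)\cong S$ as $S$-modules (Property~\ref{130619.01}) and $S$ is finitely generated over $R$ (Remark~\ref{140619.01}), Lemma~\ref{130617.31} gives an $S$-module isomorphism
$$
U = \hom_R(S,I) \cong \hom_R(\hom_R(S,C),I) \cong S\tsr_R\hom_R(C,I).
$$
Under the adjunction between $S\tsr_R(-)$ and restriction of scalars, $\pi$ corresponds to an $S$-linear map $\tilde\pi\colon U\cong S\tsr_R\hom_R(C,I)\to M$ with $\tilde\pi(s\tsr_R y)=s\cdot\pi(y)$ (the $S$-action on $M$ being through $g$), and $\tilde\pi$ is surjective because $\tilde\pi(1_S\tsr_R y)=\pi(y)$. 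Taking $M':=\ker\tilde\pi$ produces the desired short exact sequence of $S$-modules $0\to M'\to U\xra{\tilde\pi}M\to0$ with $U$ injective over $S$.

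The remaining --- and central --- task is to identify $M'$ and verify the two asserted properties. Using $S\cong R\oplus C$ as $R$-modules (Remark~\ref{140302.01}) one gets $U\cong\hom_R(C,I)\oplus\bigl(C\tsr_R\hom_R(C,I)\bigr)$ as $R$-modules, and under this splitting $\tilde\pi$ restricts to $\pi$ on the first summand and to $0$ on the second, since $C=\ker g$ annihilates $M$; hence $M'\cong K\oplus\bigl(C\tsr_R\hom_R(C,I)\bigr)$ as $R$-modules. Because $I$ is injective it lies in the Bass class $\mathcal{B}_C(R)$, so the evaluation map gives $C\tsr_R\hom_R(C,I)\cong I$, and therefore $M'\cong K\oplus I$ as $R$-modules: a finite direct sum of $C$-Gorenstein injective $R$-modules, hence $C$-Gorenstein injective over $R$. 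For the $\hom_S(V,-)$-exactness it suffices (as $U$ is injective over $S$) to know that $\ext_S^i(V,M')=0$ for $i\geq1$ and every injective $S$-module $V$; by Lemma~\cref{130617.25}{03} one may assume $V=\hom_R(S,I')$ with $I'$ injective over $R$, and since $\hom_R(C,I')\in\mathcal{A}_C(R)$ makes $S\tsr_R(-)$ carry an $R$-projective resolution of $\hom_R(C,I')$ to an $S$-projective resolution of $\hom_R(S,I')$, the adjunction identifies $\ext_S^i(\hom_R(S,I'),M')$ with $\ext_R^i(\hom_R(C,I'),M')$, which vanishes for $i\geq1$ because $M'$ is $C$-Gorenstein injective over $R$ (Lemma~\ref{131226.01}). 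I expect the main obstacle to be exactly the $R$-module bookkeeping that yields $M'\cong K\oplus I$; granting it, one finishes as in \cite[Lemma~4.1]{holm-cmhd}.
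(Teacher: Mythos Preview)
Your overall strategy---extract one step $0\to K\to\hom_R(C,I)\xra{\pi}M\to0$ from a complete $\mathcal{I}_C\mathcal{I}$-resolution, pass to $U:=\hom_R(S,I)\cong S\otimes_R\hom_R(C,I)$ via Lemma~\ref{130617.31} and Property~\ref{130619.01}, and take $\tilde\pi$ to be the adjoint map---is precisely the construction the paper intends (following \cite[Lemma~2.11]{holm-smarghd}). The problem is that Lemma~\ref{130808.02} does \emph{not} assume $C$ is semidualizing, yet your argument invokes this hypothesis three separate times: when you assert $K$ is $C$-Gorenstein injective ``by Foxby equivalence'' (you are using $\ext_R^{\geq1}(\mathcal{I}_C,\mathcal{I}_C)=0$, which needs $\mathcal{I}_C\subset\mathcal{A}_C(R)$); when you conclude $C\otimes_R\hom_R(C,I)\cong I$ from $I\in\mathcal{B}_C(R)$; and when you identify $\ext_S^i(\hom_R(S,I'),M')$ with $\ext_R^i(\hom_R(C,I'),M')$ via $\hom_R(C,I')\in\mathcal{A}_C(R)$ (this is exactly Lemma~\ref{130617.32}, whose statement requires $C$ semidualizing). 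Under Property~\ref{130831.01} alone you only know $\ext_R^{\geq1}(C,C)=0$, not the Tor-vanishing or the evaluation isomorphisms that drive the Auslander and Bass classes, so none of these steps is justified as written.

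The third gap is easily repaired without Foxby classes: since $\hom_R(S,J)\cong S\otimes_R\hom_R(C,J)$ as $S$-modules, tensor--Hom adjunction gives $\hom_S(\hom_R(S,J),-)\cong\hom_R(\hom_R(C,J),(-)|_R)$ on $S$-modules, and under your $R$-splitting $U|_R\cong\hom_R(C,I)\oplus\bigl(C\otimes_R\hom_R(C,I)\bigr)$ with $\tilde\pi=(\pi,0)$ the required surjectivity reduces to the $\hom_R(\hom_R(C,J),-)$-exactness of the original sequence, which is built into the complete $\mathcal{I}_C\mathcal{I}$-resolution. The genuine obstacle is showing $M'$ is $C$-Gorenstein injective: your identification $M'\cong K\oplus\bigl(C\otimes_R\hom_R(C,I)\bigr)$ is correct, but neither summand is visibly $C$-Gorenstein injective without the semidualizing machinery. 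What you have overlooked is the \emph{other} $R$-decomposition $U\cong\hom_R(S,I)\cong I\oplus\hom_R(C,I)$ coming directly from $S\cong R\oplus C$; the argument in \cite[Lemma~2.11]{holm-smarghd} exploits this description together with the full complete $\mathcal{I}_C\mathcal{I}$-resolution of $M$ (not just the single step) to exhibit a complete $\mathcal{I}_C\mathcal{I}$-resolution of $M'$ directly, avoiding any appeal to $\mathcal{A}_C(R)$ or $\mathcal{B}_C(R)$. Your ``bookkeeping'' concern is thus not merely bookkeeping---it is the place where your route diverges from one that works under the stated hypotheses.
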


\begin{proof}
The proof is similar to \cite[Lemma 2.11]{holm-smarghd}, using Lemma~\ref{130617.31} as in the previous result.
\end{proof}

Using the lemmas proved above in the general setting of the retract diagram, we can generalize some propositions and theorems as in \cite{holm-smarghd} and \cite{holm-cmhd}.

\begin{prop}\label{130808.03}
Let $R$ and $S$ be rings, and let $C$ be a semidualizing $R$-module, such that the triple $(R,S,C)$ satisfies Property~\ref{130831.01}. Then, for any $R$-module $M$, $M$ is $C$-Gorenstein injective over $R$ if and only if $M$ is Gorenstein injective over $S$.
\end{prop}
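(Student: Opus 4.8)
The plan is to prove the two implications separately, routing each through the characterisation of $C$-Gorenstein injectivity in Lemma~\ref{131226.01} and the structural results Lemmas~\ref{130808.01}, \ref{130808.02} and Proposition~\ref{130904.01}; here $M$ is regarded as an $S$-module via $g$ (and restriction along $f$ recovers its $R$-module structure, since $g\circ f=\id_R$). Suppose first that $M$ is Gorenstein injective over $S$. I would check conditions (a) and (b) of Lemma~\ref{131226.01}. Condition (a) holds because $\ext^i_S(U,M)=0$ for every injective $S$-module $U$ and every $i\geq 1$ (immediate from the definition of Gorenstein injectivity, since the right half of a complete injective resolution of $M$ is an injective coresolution of $M$ that stays exact under $\hom_S(U,-)$), so Proposition~\ref{130904.01} gives $\ext^i_R(\hom_R(C,J),M)=0$ for every injective $R$-module $J$ and every $i\geq 1$; as every object of $\mathcal{I}_C$ is $\hom_R(C,J)$ for some injective $R$-module $J$, this is exactly (a). For condition (b) I would iterate Lemma~\ref{130808.01}: starting from $M_0=M$ this produces short exact sequences $0\to M_{k+1}\to\hom_R(C,I_k)\to M_k\to 0$ of $R$-modules, each with $M_{k+1}$ Gorenstein injective over $S$ and each $\hom_R(\hom_R(C,J),-)$-exact for every injective $R$-module $J$; splicing them along the $M_k$ gives an augmented $\mathcal{I}_C$-resolution $\cdots\to\hom_R(C,I_1)\to\hom_R(C,I_0)\to M\to 0$ that remains exact under $\hom_R(A,-)$ for every $A\in\mathcal{I}_C$. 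Lemma~\ref{131226.01} then yields that $M$ is $C$-Gorenstein injective over $R$.

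For the converse, suppose $M$ is $C$-Gorenstein injective over $R$; I would assemble a complete injective resolution of $M$ over $S$ from two halves. For the left half, iterate Lemma~\ref{130808.02}: starting from $M_0=M$ one obtains short exact sequences $0\to M_{k+1}\to U_k\to M_k\to 0$ of $S$-modules with each $U_k$ injective over $S$, each $M_{k+1}$ again $C$-Gorenstein injective over $R$, and each sequence $\hom_S(V,-)$-exact for every injective $S$-module $V$; splicing gives a $\hom_S(V,-)$-exact exact sequence $\cdots\to U_1\to U_0\to M\to 0$ of injective $S$-modules. For the right half, Lemma~\ref{131226.01}(a) gives $\ext^i_R(\hom_R(C,J),M)=0$ for all injective $R$-modules $J$ and $i\geq 1$, so Proposition~\ref{130904.01} yields $\ext^i_S(U,M)=0$ for all injective $S$-modules $U$ and $i\geq 1$; hence any injective coresolution $0\to M\to W_0\to W_1\to\cdots$ of the $S$-module $M$ automatically stays exact after applying $\hom_S(V,-)$ for every injective $S$-module $V$. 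Splicing the two halves along $M$ produces an exact complex of injective $S$-modules that is exact under $\hom_S(V,-)$ for every injective $S$-module $V$ and in which $M$ is the kernel of the relevant differential, so $M$ is Gorenstein injective over $S$.

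I do not expect a serious obstacle, since the substantive work — transporting Ishikawa's Hom- and Tensor-evaluation isomorphisms and using the splitting $S\cong R\oplus C$ — has already been carried out in Lemmas~\ref{130617.31}, \ref{130808.01}, \ref{130808.02} and Proposition~\ref{130904.01}. The point that needs care is the bookkeeping in the two splicing steps: one must verify that splicing $\hom(-,-)$-exact short exact sequences along common (co)kernels again gives a $\hom(-,-)$-exact long sequence, which amounts to the usual check that the induced maps on $\hom$-groups have matching kernels and images. This is routine.
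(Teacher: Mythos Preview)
Your proposal is correct and follows the same strategy the paper intends: the paper's proof simply cites \cite[Proposition 2.13(1)]{holm-smarghd}, and the argument there is exactly the two-halves splicing you describe, fed by the retract analogues Lemma~\ref{130808.01}, Lemma~\ref{130808.02}, Proposition~\ref{130904.01}, and the characterisation in Lemma~\ref{131226.01}. Your bookkeeping on the splicing steps and on why $\ext^i_S(V,M)=0$ suffices for $\hom_S(V,-)$-exactness of the right half is accurate.
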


\begin{proof}
This is proved similarly as in \cite[Proposition 2.13 (1)]{holm-smarghd}.
%We first note that Corollary~\ref{130904.01} establishes the equivalence of (I1) in \cite[Defintion 2.7]{holm-smarghd} for $E$-Gorenstein injectivity over $R$ of $M$, and the condition (i) of Definition~\ref{130617.09} for Gorenstein injectivity over $S$ of $M$. It remains only to establish the equivalence of (I2) in \cite[Definition 2.7]{holm-smarghd} and the condition (ii) in Definition~\ref{130617.09}.
%
%Since $(R,S,E)$ satisfies Properties~\ref{130619.01}, \ref{130831.01} and \ref{130808.01} and $E$ is semidualizing over $R$, Lemma~\ref{130808.02} is applicable here. Assuming $M$ is $E$-Gorenstein injective over $R$, and applying this lemma repeatedly, we can build an exact sequence over $S$ with all the appropriate properties required by condition (ii) of Definition~\ref{130617.09}. Therefore, if $M$ is $E$-Gorenstein injective over $R$, then $M$ is Gorenstein injective over $S$.
%
%Assuming $M$ is Gorenstein injective, applying Property~\ref{130808.01} of $(R,S,E)$ repeatedly provides us an exact sequence over $R$ with all the appropriate properties required by (I2) of \cite[Defintion 2.7]{holm-smarghd}. Therefore, if $M$ is Gorenstein injective over $S$, then $M$ is $E$-Gorenstein injective over $R$.
\end{proof}

We need the dual versions of Lemma~\ref{130617.32}, Proposition~\ref{130904.01}, Lemma~\ref{130808.01} and Lemma~\ref{130808.02} to prove the projective and flat versions of Proposition~\ref{130808.03}. They are stated next for the sake of completeness.

\begin{lem}\label{140302.03}
Let $R$ and $S$ be rings, and let $C$ be a semidualizing $R$-module. If $(R,S,C)$ satisfies Property~\ref{130831.01}, then for any projective $R$-module $Q$, we have
$$
\ext^i_S(-,S\tsr_R Q) \cong \ext^i_R(-,C\tsr_R Q)
$$
for all $i \geq 0$ as functors on $S$-modules.
\end{lem}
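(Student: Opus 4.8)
The plan is to run the argument of Lemma~\ref{130617.32} in its dual form, trading Hom evaluation for Tensor evaluation and the Auslander class for the Bass class. The first step is to recognize the $S$-module $S \tsr_R Q$ as a coinduced module. Since $Q$ is projective, hence flat, over $R$ and $S$ is finitely generated over $R$, Lemma~\ref{140704.01} provides an $S$-module isomorphism $\hom_R(S,C)\tsr_R Q \cong \hom_R(S, C\tsr_R Q)$; composing with the isomorphism $\hom_R(S,C)\cong S$ from Property~\ref{130619.01} yields an $S$-module isomorphism
$$
S \tsr_R Q \;\cong\; \hom_R(S, C\tsr_R Q).
$$

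Next I would produce an injective resolution of $S\tsr_R Q$ over $S$ out of one over $R$. Let $\ul I$ be an injective resolution of $C\tsr_R Q$ over $R$. Each $\hom_R(S, I_j)$ is injective over $S$, since the restriction--coinduction adjunction gives $\hom_S(-,\hom_R(S,I_j))\cong\hom_R(-,I_j)$, which is exact on $S$-modules; and the complex $\hom_R(S,\ul I)$ is exact provided $\ext^i_R(S, C\tsr_R Q)=0$ for all $i\geq 1$. Here Property~\ref{130831.01} enters: it gives $S\cong R\oplus C$ as $R$-modules (Remark~\ref{140302.01}), so $\ext^i_R(S,C\tsr_R Q)\cong\ext^i_R(C, C\tsr_R Q)$ for $i\geq 1$, and this vanishes because $C\tsr_R Q$ belongs to the Bass class $\mathcal{B}_C(R)$ --- which is where the hypotheses that $C$ is semidualizing and $Q$ is flat are used. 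Combining with the isomorphism of the previous paragraph, $\hom_R(S,\ul I)$ is an injective resolution of $S\tsr_R Q$ over $S$.

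Finally, for an $S$-module $M$ the restriction--coinduction adjunction gives a natural isomorphism of complexes $\hom_S(M,\hom_R(S,\ul I))\cong\hom_R(M,\ul I)$, where $M$ is viewed as an $R$-module via $f$; passing to cohomology gives $\ext^i_S(M, S\tsr_R Q)\cong\ext^i_R(M, C\tsr_R Q)$ for all $i\geq 0$, naturally in $M$. I do not expect a genuine obstacle here: the one new ingredient beyond the proof of Lemma~\ref{130617.32} is the use of Lemma~\ref{140704.01} in place of Lemma~\ref{130617.31}, and the only point needing a moment's care is the Ext-vanishing $\ext^i_R(S,C\tsr_R Q)=0$ that makes $\hom_R(S,\ul I)$ an honest resolution, which reduces to Bass-class membership of $C\tsr_R Q$.
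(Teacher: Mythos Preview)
Your argument is correct and follows essentially the same route as the paper's: identify $S\tsr_R Q\cong\hom_R(S,C\tsr_R Q)$ via Lemma~\ref{140704.01} and Property~\ref{130619.01}, use $\hom_R(S,\ul I)$ as an $S$-injective resolution (the needed $\ext$-vanishing reducing, via $S\cong R\oplus C$, to $C\tsr_R Q\in\mathcal{B}_C(R)$), and finish with the restriction--coinduction adjunction. This is precisely the dualization of Lemma~\ref{130617.32} the paper has in mind, only spelled out in more detail.
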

\begin{proof}
This is the dual of Lemma~\ref{130617.32} using Lemma~\ref{140704.01} and $\hom_R(S,\ul I)$ as the injective resolution over $S$ of $\hom_R(S,C \tsr_R Q)$ where $\ul I$ is an injective resolution of $C \tsr_R Q$.
\end{proof}

\begin{prop}\label{140302.04}
Let $R$ and $S$ be rings, and let $C$ be a semidualizing $R$-module. If $(R,S,C)$ satisfies Property~\ref{130831.01} and $M$ is an $R$-module, then for each $i \geq 0$, we have $\ext^i_R(M,C \tsr_R P)=0$ for every projective $R$-module $P$ if and only if $\ext^i_S(M,V)=0$ for every projective $S$-module $V$.
\end{prop}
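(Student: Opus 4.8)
The plan is to run the projective dual of the argument for Proposition~\ref{130904.01}: where that proof invoked Lemmas~\cref{130617.25}{03} and~\ref{130617.32}, here I would invoke Lemmas~\cref{130617.25}{04} and~\ref{140302.03}. First, a word on module structures: the $R$-module $M$ is viewed as an $S$-module via $g$, so that $\ext^i_S(M,-)$ is defined; since the retract identity gives $g \circ f = \id_R$, restricting this $S$-module structure along $f$ recovers the original $R$-module $M$. This is exactly the compatibility needed in order to apply Lemma~\ref{140302.03} with $M$ in the first slot, which yields $\ext^i_S(M, S \tsr_R Q) \cong \ext^i_R(M, C \tsr_R Q)$ for every projective $R$-module $Q$ and every $i \geq 0$.

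For the forward implication, I would assume $\ext^i_R(M, C \tsr_R P) = 0$ for all projective $R$-modules $P$, and let $V$ be a projective $S$-module. By Lemma~\cref{130617.25}{04}, $V$ is a direct summand of $S \tsr_R P$ for some projective $R$-module $P$. Then Lemma~\ref{140302.03} gives $\ext^i_S(M, S \tsr_R P) \cong \ext^i_R(M, C \tsr_R P) = 0$, and additivity of $\ext^i_S(M,-)$ forces $\ext^i_S(M,V)$, a direct summand of $\ext^i_S(M, S \tsr_R P)$, to vanish. For the reverse implication, I would assume $\ext^i_S(M,V) = 0$ for all projective $S$-modules $V$, and let $P$ be a projective $R$-module; then $S \tsr_R P$ is a projective $S$-module, so Lemma~\ref{140302.03} gives $\ext^i_R(M, C \tsr_R P) \cong \ext^i_S(M, S \tsr_R P) = 0$.

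Once Lemma~\ref{140302.03} is in hand the argument is purely formal, so I do not anticipate a genuine obstacle; the only step deserving a moment's attention is the bookkeeping of the $R$- versus $S$-module structure on $M$ flagged in the first paragraph, which is settled by the identity $g \circ f = \id_R$ coming from the retract diagram.
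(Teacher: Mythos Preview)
Your proposal is correct and matches the paper's own approach: the paper simply says ``This is the dual of Proposition~\ref{130904.01},'' and your argument carries out precisely that dualization, replacing Lemmas~\cref{130617.25}{03} and~\ref{130617.32} by Lemmas~\cref{130617.25}{04} and~\ref{140302.03}. Your explicit remark on the $S$-module structure of $M$ via $g$ is a helpful clarification that the paper leaves implicit.
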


\begin{proof}
This is the dual of Proposition~\ref{130904.01}.
\end{proof}

\begin{lem}\label{140302.05}
Let $R$ and $S$ be rings, and $C$ be a semidualizing $R$-module. If the triple $(R,S,C)$ satisfies Property~\ref{130831.01} and $M$ is an $R$-module that is Gorenstein projective over $S$, then there exists a short exact sequence of $R$-modules
$$
0 \to M \to C \tsr_R P \to M' \to 0
$$
for some projective $R$-module $P$ such that
\begin{enumerate}
\item $M'$ is Gorenstein projective over $S$
\item the above sequence is $\hom_R(-,C\tsr_R Q)$-exact for any projective $R$-module $Q$.
\end{enumerate}
\end{lem}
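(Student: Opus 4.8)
The plan is to dualize the proof of Lemma~\ref{130808.01}, with the Tensor evaluation isomorphism of Lemma~\ref{140704.01} playing the role that the Hom evaluation isomorphism of Lemma~\ref{130617.31} played there, and with Lemma~\cref{130617.25}{04} replacing Lemma~\cref{130617.25}{03}. First, since $M$ is Gorenstein projective over $S$ it has a complete projective resolution over $S$, and a suitable truncation of it gives a short exact sequence of $S$-modules
$$
0 \to M \xra{\epsilon} K \xra{\eta} N \to 0
$$
in which $K$ is projective over $S$, $N$ is Gorenstein projective over $S$, and the sequence is $\hom_S(-,L)$-exact for every projective $S$-module $L$ --- in particular for $L = S \tsr_R Q$ with $Q$ projective over $R$. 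By Lemma~\cref{130617.25}{04}, $K$ is a direct summand of $S \tsr_R P$ for some projective $R$-module $P$; adding a split short exact sequence and absorbing the complementary summand into $N$ (which stays Gorenstein projective over $S$, since projective $S$-modules are Gorenstein projective and the class is closed under direct sums and summands) we may assume $K = S \tsr_R P$.

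Since $\hom_R(S,C) \cong S$ as $S$-modules by Property~\ref{130619.01}, and $P$ is flat over $R$ with $S$ finitely generated over $R$, Lemma~\ref{140704.01} then gives $S$-module isomorphisms $S \tsr_R P \cong \hom_R(S,C) \tsr_R P \cong \hom_R(S, C \tsr_R P)$. Rewriting the truncated sequence through these identifications, one compares it --- via the natural $S$-module maps between $C \tsr_R P$ and $\hom_R(S, C \tsr_R P)$, which are the duals of the action map $s \tsr_R \beta \mapsto s\beta$ and its section $\beta \mapsto 1_S \tsr_R \beta$ used in Lemma~\ref{130808.01} --- with a short exact sequence of $S$-modules of the form $0 \to M \to C \tsr_R P \to M' \to 0$, as in the diagram chase of \cite[Lemma 4.1]{holm-cmhd}. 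This construction is the main obstacle: exactly as in Lemma~\ref{130808.01}, one must identify the correct $S$-module structure on $C \tsr_R P$, check that the comparison diagram commutes, and verify that the bottom row is exact and inherits the relevant Hom-exactness from the top; these verifications, while routine, are delicate and mirror the ones in \cite[Lemma 4.1]{holm-cmhd}.

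With the diagram in hand, the two asserted properties follow as in the injective case. Property (1) comes from a snake-lemma comparison of the two rows, which exhibits $M'$ in a short exact sequence of $S$-modules together with $N$ and a copy of $R \tsr_R P$, both of which are Gorenstein projective over $S$ (using Theorem~\ref{130617.28} that $R$ is Gorenstein projective over $S$, together with closure of the class under direct sums and summands), so that $M'$ is Gorenstein projective over $S$ because the class of Gorenstein projective $S$-modules is projectively resolving. Property (2) comes from transporting the $\hom_S(-, S \tsr_R Q)$-exactness of the truncated sequence to $\hom_R(-, C \tsr_R Q)$-exactness of the new sequence by means of Lemma~\ref{140302.03} and Hom-tensor adjointness, for every projective $R$-module $Q$.
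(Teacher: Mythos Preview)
Your proposal is correct and follows exactly the approach the paper takes: the paper's proof reads in its entirety ``This is the dual of Lemma~\ref{130808.01}, using Lemma~\ref{140704.01},'' and your write-up fleshes out precisely that dualization, invoking Lemma~\cref{130617.25}{04} in place of Lemma~\cref{130617.25}{03}, Lemma~\ref{140704.01} in place of Lemma~\ref{130617.31}, Theorem~\ref{130617.28} for the Gorenstein projectivity of $R$ over $S$, and Lemma~\ref{140302.03} for property~(2). The details you sketch (the comparison diagram with $C\otimes_R P \hookrightarrow \hom_R(S,C\otimes_R P)\cong S\otimes_R P$, the snake-lemma identification of the cokernel with $R\otimes_R P\cong P$, and the transport of $\hom_S(-,S\otimes_R Q)$-exactness to $\hom_R(-,C\otimes_R Q)$-exactness) are the right ones and go through as you indicate.
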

\begin{proof}
This is the dual of Lemma~\ref{130808.01}, using Lemma~\ref{140704.01}.
\end{proof}

\begin{lem}\label{140302.06}
Let $R$ and $S$ be rings, and let $C$ be an $R$-module such that $(R,S,C)$ satisfies Property~\ref{130831.01}. Let $M$ be an $R$-module that is $C$-Gorenstein projective over $R$. Then there exists a short exact sequence of $S$-modules
$$
0 \to M \to W \to M' \to 0
$$
where $W$ is projective over $S$, $M'$ is $C$-Gorenstein projective over $R$ and the above sequence is $\hom_S(-,Y)$-exact for any projective $S$-module $Y$.
\end{lem}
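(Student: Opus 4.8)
The plan is to dualize Lemma~\ref{130808.02} (equivalently, to adapt \cite[Lemma 2.11]{holm-smarghd}), with Lemma~\ref{140704.01} taking over the role played there by Lemma~\ref{130617.31}, exactly as Lemma~\ref{140302.05} dualizes Lemma~\ref{130808.01}.

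First I would invoke Lemma~\ref{131213.03}: since $M$ is $C$-Gorenstein projective over $R$, it admits an augmented $\mathcal{P}_C$-coresolution that is $\hom_R(-,A)$-exact for every $A\in\mathcal{P}_C$. Extracting its first step produces a short exact sequence of $R$-modules
$$
0\to M\to C\tsr_R P\to M_0\to 0
$$
with $P$ projective over $R$, with the sequence $\hom_R(-,C\tsr_R Q)$-exact for every projective $R$-module $Q$ (this is just exactness of $\hom_R(-,C\tsr_R Q)$ applied to the augmented coresolution at its first spot), and with $M_0$ again $C$-Gorenstein projective over $R$.

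Next I would enlarge the middle term to a projective $S$-module. By Remark~\ref{140302.01}, Property~\ref{130831.01} gives $S\cong R\oplus C$ as $R$-modules, so $S\tsr_R P\cong P\oplus(C\tsr_R P)$ as $R$-modules and $S\tsr_R P$ is projective over $S$; Lemma~\ref{140704.01} is used here to see that this $R$-decomposition is compatible with the $S$-module structure, just as Lemma~\ref{130617.31} is used for the Hom side in Lemma~\ref{130808.01}. Composing $M\hookrightarrow C\tsr_R P$ with the inclusion of the first summand yields a short exact sequence of $S$-modules
$$
0\to M\to W\to M'\to 0
$$
with $W:=S\tsr_R P$ projective over $S$ and $M'\cong M_0\oplus P$ (as $R$-modules); thus $M'$ is $C$-Gorenstein projective over $R$, being a direct sum of $M_0$ with the projective, hence $C$-Gorenstein projective, $R$-module $P$. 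It then remains to check the sequence is $\hom_S(-,Y)$-exact for every projective $S$-module $Y$: by Lemma~\cref{130617.25}{04} one may take $Y=S\tsr_R P'$ with $P'$ projective over $R$, and for such $Y$ Lemma~\ref{140302.03} identifies $\hom_S(-,S\tsr_R P')$ with $\hom_R(-,C\tsr_R P')$ as functors on $S$-modules, so applying $\hom_R(-,C\tsr_R P')$ to $0\to M\to W\to M'\to 0$ is exact because, after discarding the split projective summand $P$, it is precisely the $\hom_R(-,C\tsr_R Q)$-exact sequence from the first step.

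The main obstacle is the closure properties invoked above: that the cosyzygy $M_0$ and the projective $R$-module $P$ are $C$-Gorenstein projective over $R$, and that $\hom_R(-,C\tsr_R Q)$-exactness passes to the truncated and augmented sequences. These are the exact duals of the facts used for Lemma~\ref{130808.02} — there one needs that injective $R$-modules, and the cosyzygies of complete $\mathcal{I}_C\mathcal{I}$-resolutions, are $C$-Gorenstein injective over $R$ — and they follow from the development around Lemma~\ref{131213.03} together with \cite{white-gpdsdm}; everything else is a faithful transcription of the proof of Lemma~\ref{130808.02} with Lemma~\ref{140704.01} in place of Lemma~\ref{130617.31}.
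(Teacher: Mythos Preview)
Your overall plan is correct—this is indeed the dual of Lemma~\ref{130808.02} with Lemma~\ref{140704.01} in the role of Lemma~\ref{130617.31}—but there is a real gap in how you build the $S$-linear embedding $M\hookrightarrow S\tsr_R P$. The inclusion $C\tsr_R P\hookrightarrow P\oplus(C\tsr_R P)\cong S\tsr_R P$ coming from the $R$-splitting $S\cong R\oplus C$ is \emph{not} $S$-linear in general: with the $S$-action on $C\tsr_R P$ given via $g$ one has $s\cdot(c\tsr p)=g(s)c\tsr p$, whereas in $S\tsr_R P$ one has $s\cdot((0,c)\tsr p)=(s(0,c))\tsr p$; for $S=R\bowtie C$ and $s=(r,c')$ this is $(0,rc+c'c)\tsr p\neq(0,g(s)c)\tsr p$. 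Lemma~\ref{140704.01} does not make that summand inclusion $S$-linear. What it actually furnishes (together with $\hom_R(S,C)\cong S$ from Property~\ref{130619.01}) is an $S$-module isomorphism $S\tsr_R P\cong\hom_R(S,C\tsr_R P)$, and the correct $S$-linear map is the coaction $C\tsr_R P\to\hom_R(S,C\tsr_R P)$, $x\mapsto(s\mapsto g(s)x)$—the exact dual of the action map $\psi$ used in Lemma~\ref{130808.01}. With this map your identification $M'\cong M_0\oplus P$ is no longer available; one instead argues that $W\cong P\oplus(C\tsr_R P)$ as $R$-modules is $C$-Gorenstein projective (each summand is) and invokes the appropriate closure property for cokernels.

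A second, smaller gap: you appeal to Lemma~\ref{140302.03} for the $\hom_S(-,Y)$-exactness, but that lemma assumes $C$ is semidualizing, which is not hypothesized in Lemma~\ref{140302.06}. The intended route avoids this: for $Y$ a summand of $S\tsr_R Q$, Lemma~\ref{140704.01} and Hom--tensor adjunction give $\hom_S(-,S\tsr_R Q)\cong\hom_S(-,\hom_R(S,C\tsr_R Q))\cong\hom_R(-,C\tsr_R Q)$ directly, so exactness reduces to the $\hom_R(-,C\tsr_R Q)$-exactness already in hand from the $\mathcal{P}_C$-coresolution.
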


\begin{proof}
This is the dual of Lemma~\ref{130808.02}.
\end{proof}

Using the above results, one can prove the injective version of Proposition~\ref{130808.03}.

\begin{prop}\label{140302.07}
Let $R$ and $S$ be rings, and let $C$ be a semidualizing $R$-module, such that the triple $(R,S,C)$ satisfies Property~\ref{130831.01}. Then, an $R$-module $M$ is $C$-Gorenstein projective if and only if $M$ is Gorenstein projective over $S$.
\end{prop}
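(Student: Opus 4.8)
The plan is to run the argument of Proposition~\ref{130808.03} with every ingredient replaced by its projective dual. Recall from Lemma~\ref{131213.03} that an $R$-module $M$ is $C$-Gorenstein projective exactly when: (a) $\ext_R^i(M,A)=0$ for all $A\in\mathcal{P}_C$ and all $i\geq 1$; and (b) $M$ admits an augmented $\mathcal{P}_C$-coresolution ${}^+\ul Y$ such that $\hom_R({}^+\ul Y,A)$ is exact for every $A\in\mathcal{P}_C$. Applying the same lemma over the ring $S$ (with $C$ replaced by $S$, so that the role of $\mathcal{P}_C$ is played by the class of projective $S$-modules), $M$ is Gorenstein projective over $S$ exactly when: (a$'$) $\ext_S^i(M,V)=0$ for all projective $S$-modules $V$ and all $i\geq 1$; and (b$'$) $M$ admits an augmented projective coresolution over $S$ that is $\hom_S(-,V)$-exact for every projective $S$-module $V$. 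So the task is to move between the pairs (a),(b) and (a$'$),(b$'$).

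For the implication ``Gorenstein projective over $S$ $\Rightarrow$ $C$-Gorenstein projective over $R$'', I would assume $M$ is Gorenstein projective over $S$, so that (a$'$) holds. Proposition~\ref{140302.04} converts (a$'$) directly into (a), since $\mathcal{P}_C$ consists of the modules isomorphic to $C\tsr_R P$ with $P$ projective. For (b), apply Lemma~\ref{140302.05} to $M$, obtaining a short exact sequence $0\to M\to C\tsr_R P_0\to M_1\to 0$ with $M_1$ Gorenstein projective over $S$ and the sequence $\hom_R(-,C\tsr_R Q)$-exact for every projective $R$-module $Q$; repeating on $M_1,M_2,\dots$ and splicing yields an exact complex $0\to M\to C\tsr_R P_0\to C\tsr_R P_1\to\cdots$, which is an augmented $\mathcal{P}_C$-coresolution of $M$ and remains $\hom_R(-,A)$-exact for every $A\in\mathcal{P}_C$. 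By Lemma~\ref{131213.03}, $M$ is $C$-Gorenstein projective.

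For the reverse implication, I would assume $M$ is $C$-Gorenstein projective over $R$, so (a) and (b) hold. Proposition~\ref{140302.04} turns (a) into (a$'$). For (b$'$), apply Lemma~\ref{140302.06} repeatedly to build short exact sequences $0\to M\to W_0\to M_1\to 0$ with $W_0$ projective over $S$, $M_1$ again $C$-Gorenstein projective over $R$, and the sequence $\hom_S(-,Y)$-exact for every projective $S$-module $Y$; splicing these gives an augmented projective coresolution $0\to M\to W_0\to W_1\to\cdots$ over $S$ that is $\hom_S(-,V)$-exact for every projective $S$-module $V$. By Lemma~\ref{131213.03} applied over $S$, $M$ is Gorenstein projective over $S$.

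The substantive facts are already isolated in Proposition~\ref{140302.04} and in the dual Lemmas~\ref{140302.05} and~\ref{140302.06}; what remains is bookkeeping. The only points needing care are the standard verifications that splicing a chain of short exact sequences produces an exact complex and that $\hom(-,-)$-exactness against a fixed test class is inherited by the spliced complex from the individual sequences, together with keeping straight which test class ($\mathcal{P}_C$ over $R$ versus projective $S$-modules) governs each exactness condition as one crosses between $R$ and $S$. I do not anticipate a real obstacle beyond this, since the genuine dualization work has been pushed into the preceding lemmas.
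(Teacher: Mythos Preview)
Your proposal is correct and follows essentially the same approach as the paper: the paper's proof simply says to argue as in Proposition~\ref{130808.03}, replacing the injective-side ingredients by their projective duals, namely Lemmas~\ref{140302.03}, \ref{140302.05}, \ref{140302.06} and Proposition~\ref{140302.04}, which is exactly the scheme you have written out in detail.
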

\begin{proof}
Argue similarly as in the proof of Proposition~\ref{130808.03} using Lemmas~\ref{140302.03}, \ref{140302.05}, \ref{140302.06} and Proposition~\ref{140302.04} instead.
\end{proof}

The flat version of Proposition~\ref{130808.03} can be proved by essentially the same techniques as in the proof of \cite[Proposition 2.15]{holm-smarghd}.

%For the flat version of Proposition~\ref{130808.03}, please see \cite[Proposition 2.15]{holm-smarghd}, whose proof can be adapted for our general setting.

\begin{prop}\label{140302.08}
Let $R$ and $S$ be rings, and let $C$ be a semidualizing $R$-module, such that the triple $(R,S,C)$ satisfies Property~\ref{130831.01}. Then, for any $R$-module $M$, $M$ is $C$-Gorenstein flat over $R$ if and only if $M$ is Gorenstein flat over $S$.
\end{prop}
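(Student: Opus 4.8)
The plan is to run the flat analogue of the arguments already used for the injective case in Proposition~\ref{130808.03} and the projective case in Proposition~\ref{140302.07}, producing each needed ingredient by replacing Ishikawa's Hom evaluation (Lemma~\ref{130617.31}) with his Tensor evaluation (Lemma~\ref{140704.01}) and by detecting exactness against $\mathcal{I}_C$ and against injective $S$-modules via $-\tsr_R-$ rather than via $\hom_R(-,-)$. Note first that $C$, being semidualizing, is finitely generated, so $S\cong R\oplus C$ is module-finite over the noetherian ring $R$ and hence itself noetherian; in particular $S$ is coherent, and the direct sum of a Gorenstein flat $S$-module with a flat $S$-module is again Gorenstein flat over $S$ (and likewise over $R$ for the $C$-Gorenstein flat modules), which is all the closure we will need.

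First I would record the flat counterparts of the auxiliary results. For an injective $R$-module $I$ there is a natural isomorphism $\tor_i^S(-,\hom_R(S,I))\cong\tor_i^R(-,\hom_R(C,I))$ of functors on $S$-modules for all $i\geq 0$: by \eqref{150114.02} one has $\hom_R(S,I)\cong S\tsr_R\hom_R(C,I)$ as $S$-modules, and since $\hom_R(C,I)\in\mathcal{A}_C(R)$ while $S\cong R\oplus C$ as $R$-modules we get $\tor_i^R(S,\hom_R(C,I))\cong\tor_i^R(C,\hom_R(C,I))=0$ for $i\geq 1$, so applying $S\tsr_R-$ to an $R$-flat resolution of $\hom_R(C,I)$ gives an $S$-flat resolution of $\hom_R(S,I)$, and associativity of the tensor product yields the isomorphism. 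Combined with Lemma~\cref{130617.25}{03}, this gives the flat counterpart of Proposition~\ref{140302.04}: for an $R$-module $M$ and each $i\geq 0$, one has $\tor_i^R(A,M)=0$ for every $A\in\mathcal{I}_C$ if and only if $\tor_i^S(J,M)=0$ for every injective $S$-module $J$. One also needs flat versions of the ``building block'' Lemmas~\ref{130808.01} and \ref{140302.05}, respectively Lemmas~\ref{130808.02} and \ref{140302.06}: if $M$ is Gorenstein flat over $S$ then there is a short exact sequence of $R$-modules $0\to M\to C\tsr_R F\to M'\to 0$ with $F$ flat over $R$, $M'$ Gorenstein flat over $S$, and which stays exact after applying $A\tsr_R-$ for each $A\in\mathcal{I}_C$; and, dually, if $M$ is $C$-Gorenstein flat over $R$ then there is a short exact sequence of $S$-modules $0\to M\to W\to M'\to 0$ with $W$ flat over $S$, $M'$ $C$-Gorenstein flat over $R$, and which stays exact after applying $J\tsr_S-$ for each injective $S$-module $J$. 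These should be proved exactly as their injective and projective counterparts, with the isomorphism $S\tsr_R F\cong\hom_R(S,C\tsr_R F)$ (from Lemma~\ref{140704.01} together with $\hom_R(S,C)\cong S$) playing the role that $\hom_R(S,I)\cong S\tsr_R\hom_R(C,I)$ plays there.

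Granting these, the two implications are formal. If $M$ is $C$-Gorenstein flat over $R$, then $\tor_i^R(A,M)=0$ for all $A\in\mathcal{I}_C$ and $i\geq 1$ by Lemma~\ref{131226.02}, hence $\tor_i^S(J,M)=0$ for all injective $S$-modules $J$ and $i\geq 1$ by the flat counterpart of Proposition~\ref{140302.04}; iterating the second building sequence and splicing the resulting flat coresolution of $M$ over $S$ onto an arbitrary $S$-flat resolution of $M$ produces a complete flat resolution of $M$ over $S$ --- the splice remains exact after $J\tsr_S-$ because the left half only contributes the now-vanishing $\tor_{\geq 1}^S(J,M)$ while the right half is $J\tsr_S-$-exact by construction --- so $M$ is Gorenstein flat over $S$. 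Conversely, if $M$ is Gorenstein flat over $S$ then $\tor_i^S(J,M)=0$ for all injective $S$-modules $J$ and $i\geq 1$ straight from the definition of a complete flat resolution, so $\tor_i^R(A,M)=0$ for all $A\in\mathcal{I}_C$ and $i\geq 1$; iterating the first building sequence and splicing produces an augmented $\mathcal{F}_C$-coresolution of $M$ which stays exact after applying $A\tsr_R-$ for every $A\in\mathcal{I}_C$, whence $M$ is $C$-Gorenstein flat over $R$ by Lemma~\ref{131226.02}.

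The hard part will be the first building sequence: passing from the flat $S$-module supplied by the complete flat resolution of $M$ over $S$ to a module of the form $C\tsr_R F$ requires a flat analogue of Lemma~\ref{130617.25}, namely that every flat $S$-module is a direct summand of $S\tsr_R F$ for some flat $R$-module $F$ --- subtler than the injective and projective versions, since flat modules need not split off as summands of canonically built modules; I expect this to be handled by a direct-limit argument (via the Govorov--Lazard description of flat modules and the fact that $S\tsr_R-$ commutes with direct limits), using the decomposition $S\cong R\oplus C$ to control the transition maps. Everything after that --- checking $S$-linearity of the natural maps among $S\tsr_R F$, $\hom_R(S,C\tsr_R F)$ and $C\tsr_R F$, and propagating the $A\tsr_R-$ and $J\tsr_S-$ exactness through the two building sequences and the splices --- is routine, exactly as in the proof of \cite[Proposition 2.15]{holm-smarghd} and of Lemmas~\ref{130808.01} and \ref{140302.05}. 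Finally, since both $R$ and $S$ are noetherian, the statement can alternatively be obtained from Proposition~\ref{130808.03} by Pontryagin duality, using that an $R$-module $M$ is $C$-Gorenstein flat over $R$ exactly when its character module $M^{+}:=\hom_{\bbz}(M,\bbq/\bbz)$ is $C$-Gorenstein injective over $R$, and that an $S$-module $M$ is Gorenstein flat over $S$ exactly when $M^{+}$ is Gorenstein injective over $S$ --- both valid here because the ambient rings are noetherian.
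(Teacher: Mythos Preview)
The paper takes precisely the route you relegate to your final sentence: it reduces to Proposition~\ref{130808.03} via duality with a faithfully injective module. Concretely, for a faithfully injective $R$-module $E$ one argues (as in the opening of \cite[Proposition 2.15]{holm-smarghd}, via Hom--tensor adjointness) that $M$ is $C$-Gorenstein flat over $R$ if and only if $\hom_R(M,E)$ is $C$-Gorenstein injective over $R$; on the $S$-side, $\hom_R(S,E)$ is faithfully injective over $S$, and adjointness gives $\hom_S(M,\hom_R(S,E))\cong\hom_R(M,E)$, so \cite[Theorem 6.4.2]{christensen-gd} yields $\operatorname{Gfd}_S M=\operatorname{Gid}_S(\hom_R(M,E))$. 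Combining this with the injective case finishes the proof in a few lines. Your Pontryagin-dual alternative is this same argument with $E=\hom_{\bbz}(R,\bbq/\bbz)$.

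Your primary (direct) approach is considerably longer and, as you yourself flag, has a genuine gap: the flat analogue of Lemma~\ref{130617.25} --- that every flat $S$-module be a direct summand of $S\tsr_R F$ for some flat $R$-module $F$ --- does not follow from Lazard's theorem in the way you sketch. Writing a flat $S$-module as $\varinjlim S^{n_\alpha}$ gives $S$-linear transition maps $S^{n_\alpha}\to S^{n_\beta}$ given by matrices with entries in $S\cong R\oplus C$, and there is no reason these arise as $S\tsr_R(-)$ applied to $R$-linear maps; so you cannot pull the direct limit through $S\tsr_R-$ to produce a single flat $R$-module $F$, nor does the decomposition $S\cong R\oplus C$ obviously help control this. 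Without that step your ``first building sequence'' does not get off the ground, and the direct argument stalls. The duality route sidesteps the issue entirely; make it the proof rather than an afterthought.
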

\begin{proof}
Argue as in the beginning of the proof of \cite[Proposition 2.15]{holm-smarghd}, using Hom-tensor adjointness, that for any faithfully injective $R$-module $E$, the module $M$ is $C$-Gorenstein flat if and only if the module $\hom_R(M,E)$ is $C$-Gorenstein injective.

Since $\hom_R(S,E)$ is faithfully injective over $S$ for any faithfully injective $R$-module $E$, one has $\operatorname{Gfd}_S M=\operatorname{Gid}_S(\hom_S(M,\hom_R(S,E)))$ by \cite[Theorem 6.4.2]{christensen-gd}. Moreover, since $\hom_S(M, \hom_R(S,E))\cong \hom_R(M, E)$ by Hom-tensor adjointness and tensor cancellation, we have $\operatorname{Gfd}_S M=\operatorname{Gid}_S(\hom_R(M,E))$.

The above two facts, combined with Proposition~\ref{130808.03}, give the desired result.
\end{proof}

The last result of this section is Theorem~\ref{140404.01}.

\begin{thm}\label{130904.02}
Let $R$ and $S$ be rings, and let $C$ be a semidualizing $R$-module. If $(R,S,C)$ satisfies Property~\ref{130831.01}, then for any homologically left-bounded $R$-complex $M$ and any homologically right-bounded $R$-complex $N$, one has
\begin{align*}
C\operatorname{-Gid}_R M & = \operatorname{Gid}_S M \\
C\operatorname{-Gpd}_R N & = \operatorname{Gpd}_S N \\
C\operatorname{-Gfd}_R N & = \operatorname{Gfd}_S N 
\end{align*}
\end{thm}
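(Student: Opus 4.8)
The plan is to deduce all three identities from the module-level equivalences already established --- Proposition~\ref{130808.03} for the injective case, Proposition~\ref{140302.07} for the projective case, Proposition~\ref{140302.08} for the flat case --- together with the observation (Proposition~\ref{130819.01}, and its analogue for projectives coming from Theorem~\ref{130617.28}) that an injective, resp.\ projective, $R$-module, viewed over $S$, is Gorenstein injective, resp.\ Gorenstein projective, over $S$. I will describe the injective identity $C\operatorname{-Gid}_R M = \operatorname{Gid}_S M$ in full; the projective identity is its arrow-reversal, worked out with projective resolutions of the homologically right-bounded complex $N$, and the flat identity reduces to the injective one by dualizing into a faithfully injective module exactly as in the proof of Proposition~\ref{140302.08} (note $N$ homologically right-bounded makes the dual complex homologically left-bounded, so the injective identity applies to it).

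First I would reduce to the case that $M$ is an $R$-module: choosing an injective resolution $M \xrightarrow{\simeq} I$ over $R$ and passing to a cosyzygy module $Z$ of $I$ taken far enough down the resolution (below the homology of $M$), the standard dimension-shift reduces the computation of $C\operatorname{-Gid}_R M$ to that of $C\operatorname{-Gid}_R Z$, and --- using that $I$, regarded over $S$, is an exact coresolution of $M$ by Gorenstein injective $S$-modules (Proposition~\ref{130819.01}) --- the computation of $\operatorname{Gid}_S M$ to that of $\operatorname{Gid}_S Z$. So assume henceforth that $M$ is a module.

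Fix an injective $R$-resolution $0 \to M \to I^0 \to I^{-1} \to \cdots$ and, for $k \ge 0$, let $Z_k$ denote its $k$-th cosyzygy, an $R$-module. Since injective $R$-modules are $C$-Gorenstein injective over $R$ and, regarded over $S$, Gorenstein injective over $S$ (Proposition~\ref{130819.01}), the standard cosyzygy characterization of these dimensions yields: $C\operatorname{-Gid}_R M \le k$ if and only if $Z_k$ is $C$-Gorenstein injective over $R$; and $\operatorname{Gid}_S M \le k$ if and only if $Z_k$, regarded over $S$, is Gorenstein injective over $S$. For the second assertion one notes that $0 \to M \to I^0 \to I^{-1} \to \cdots$, read over $S$, is an exact coresolution by Gorenstein injective $S$-modules, and compares it via the dual Schanuel lemma with an honest $S$-injective coresolution of $M$: the respective $k$-th cosyzygies differ by a direct sum of Gorenstein injective $S$-modules, and this class is closed under direct summands, so the one is Gorenstein injective over $S$ exactly when the other is. Finally, because $Z_k$ is an $R$-module, Proposition~\ref{130808.03} identifies the two right-hand conditions. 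Hence $C\operatorname{-Gid}_R M \le k \iff \operatorname{Gid}_S M \le k$ for every $k \ge 0$, and $C\operatorname{-Gid}_R M = \operatorname{Gid}_S M$ follows, with both sides simultaneously infinite if no such $k$ exists.

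The main obstacle is the asymmetry in the middle step: restriction of scalars along $g$ does not carry injective $R$-modules to injective $S$-modules, so a fixed $R$-injective resolution cannot be used verbatim to compute $\operatorname{Gid}_S$. Proposition~\ref{130819.01} is precisely what repairs this, turning such a resolution into a Gorenstein injective coresolution over $S$; but exploiting it requires the standard-but-not-formal fact that Gorenstein injective dimension can be read off along \emph{any} exact coresolution by Gorenstein injective modules, which rests on the closure of that class under direct summands and under cokernels of monomorphisms between its members --- and one needs the same closure properties for the $C$-Gorenstein injective class over $R$. Verifying these closure statements, and keeping the conventions for dimensions of complexes straight in the reduction step, is where the real work lies; the crossing between the two rings is then just Propositions~\ref{130808.03} and~\ref{130819.01} plus a dimension count.
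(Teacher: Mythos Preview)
Your proposal is correct and follows essentially the same route as the paper: both arguments pivot on Propositions~\ref{130808.03} and~\ref{130819.01} to turn an injective $R$-resolution of $M$ into a Gorenstein injective coresolution over $S$, and then read off the cosyzygy at the right spot. The paper streamlines one step: rather than proving both inequalities through cosyzygies, it observes that $C\operatorname{-Gid}_R M \geq \operatorname{Gid}_S M$ is immediate (any $C$-Gorenstein injective resolution of $M$ over $R$ is already a Gorenstein injective resolution over $S$ by Proposition~\ref{130808.03}), and for the reverse inequality it cites \cite[Theorem~3.3]{christensen-gpifd} directly for the fact you flag as ``standard-but-not-formal'' --- that a cosyzygy in a Gorenstein injective coresolution is itself Gorenstein injective once one is past the dimension --- rather than rederiving it by a Schanuel-type comparison.
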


\begin{proof}
This follows from Propositions~\ref{130808.03}, \ref{140302.07} and \ref{140302.08} as in \cite[Theorem 2.16]{holm-smarghd}. We prove the equality $C\operatorname{-Gid}_R M = \operatorname{Gid}_S M$ as follows.

Since every $C$-Gorenstein injective $R$-module is Gorenstein injective over $S$ by Proposition~\ref{130808.03}, we have $C\operatorname{-Gid}_R M \geq \operatorname{Gid}_S M$.

For the opposite inequality, let $n:=\operatorname{Gid}_S M < \infty$. Let $I$ be a left-bounded complex of injective $R$-modules such that $I \simeq M$ in the derived category $\mathcal{D}(R)$. Each $I_i$ is injective over $R$, hence Gorenstein injective over $S$ by Proposition~\ref{130819.01}. Let $M':=\ker(I_{-n} \to I_{-n-1})$. By \cite[Theorem 3.3]{christensen-gpifd}, $M'$ is Gorenstein injective over $S$. By Proposition~\ref{130808.03}, $M'$ is $C$-Gorenstein injective over $R$, and so are $I_0, \ldots, I_{-n+1}$. Let $I':= \cdots \to I_{-n+1} \to M' \to 0$. Since $I'\simeq I \simeq M$, $C\operatorname{-Gid}_R M \leq n$.

The projective and flat versions are proved similarly.
\end{proof}

\section{Examples}

It is routine to show that Nagata's trivial extension $R \ltimes C$ satisfies Property~\ref{130831.01}, hence we can recover \cite[Theorem 2.16]{holm-smarghd} as a special case of Theorem~\ref{130904.02}. The rest of this section is devoted to two similar constructions, which can be recovered as special cases of our retract diagram in Property~\ref{130619.01}. In particular, we prove in this section Theorem~\ref{140626.03} from the introduction.

\subsection{Amalgamated Duplication of a Ring along an Ideal}

\

The following construction is due to D'Anna and Fontana \cite{d'anna-adri}.

\begin{Notationdefinition}\label{130803.02}
Let $R$ be a ring, and let $C$ be an ideal in $R$. Then define a multiplication structure on $R \oplus C$ as follows: for each $(r,c)$ and $(r',c')$ in $R \oplus C$, we define $(r,c)(r',c')=(rr',rc'+r'c+cc')$. The group $R \oplus C$ with this multiplication structure is a ring with $(1_R,0)$ as the multiplicative identity~\cite{d'anna-adri}. We denote this ring as $R \bowtie C$, and call it the amalgamated duplication of $R$ along $C$.
\end{Notationdefinition}

It is routine to check that $R \bowtie C$ fits into a  retract diagram satisfying Property~\ref{130619.01}. We collect this information in the following lemma.

\begin{lem}\label{130803.03}
Let $R$ be a ring, and let $C$ be an ideal in $R$. Then the diagram
$$
\xymatrix{
R \ar^-{f}[r] \ar_{\id_R}[rd] & R\bowtie C \ar^{g}[d] \\
& R
}
$$
where $f(r):=(r,0)$ and $g(r,c):=r$ for each $r \in R$ and $c \in C$, is a commutative diagram of ring homomorphisms such that $\ker g \cong C$ as $R$-modules.
\end{lem}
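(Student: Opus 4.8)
The plan is to verify, one at a time, the three assertions packaged into the statement: that $f$ and $g$ are ring homomorphisms, that the triangle commutes, and that $\ker g \cong C$ as $R$-modules. First I would check that $f\colon R \to R\bowtie C$, $r \mapsto (r,0)$, respects the operations: additivity is clear from the componentwise addition on $R\oplus C$, the multiplicativity $f(r)f(r') = (r,0)(r',0) = (rr', r\cdot 0 + r'\cdot 0 + 0\cdot 0) = (rr',0) = f(rr')$ follows directly from the multiplication rule in Definition/Notation~\ref{130803.02}, and $f(1_R) = (1_R,0)$ is the identity of $R\bowtie C$. Next I would check that $g\colon R\bowtie C \to R$, $(r,c)\mapsto r$, is a ring homomorphism: it is plainly additive, it sends $(1_R,0)$ to $1_R$, and $g((r,c)(r',c')) = g(rr', rc'+r'c+cc') = rr' = g(r,c)g(r',c')$. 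The commutativity of the diagram is then immediate, since $(g\circ f)(r) = g(r,0) = r = \id_R(r)$ for all $r\in R$.

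For the last assertion I would compute the kernel directly: $\ker g = \{(r,c)\in R\bowtie C : r = 0\} = \{(0,c) : c\in C\}$. The map $C \to \ker g$ sending $c\mapsto (0,c)$ is clearly a bijective additive map, and it is $R$-linear once one observes that the $R$-module structure on $\ker g$ (via restriction of scalars along $f$) is $r\cdot(0,c) = (r,0)(0,c) = (r\cdot 0, r\cdot c + 0\cdot c + 0\cdot c) = (0, rc)$, which matches the $R$-action on $C$. Hence $\ker g\cong C$ as $R$-modules.

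None of these steps presents a genuine obstacle; the content is entirely a matter of unwinding the definition of the multiplication on $R\bowtie C$ and checking axioms, exactly as the phrase ``it is routine to check'' in the text signals. The only point requiring a moment's care is the identification of the $R$-module structure on $\ker g$ in the third step — one must use the module structure obtained by restricting scalars along $f$ (equivalently, along the composite $\id_R = g\circ f$, which is why the $R$-module structures are compatible) rather than some other action — but this too is a one-line verification. I would therefore present the proof compactly, spelling out the multiplicativity of $f$ and $g$ and the kernel computation, and leaving the purely formal additivity and unit checks to the reader.
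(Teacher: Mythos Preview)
Your proposal is correct and matches the paper's approach: the paper states the lemma without proof, prefacing it with ``It is routine to check,'' and your verification unwinds precisely the routine checks that phrase signals. The only difference is that you spell out the details the paper leaves implicit.
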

%
%\begin{proof}
%By construction, $f$ and $g$ are well-defined functions, and the diagram commutes. Since $f$ and $g$ are natural injection and surjection, respectively, they respect addition in $R \bowtie E$. Therefore it is enough to check that $f$ and $g$ respect multiplication in $R \bowtie E$. Let $r,r' \in R$ and $e,e'\in E$. Then
%\begin{align*}
%f(rr')  & = (rr',0) \\
%        & = (rr', r.0+r'.0+0.0) \\
%        & = (r,0)(r',0)\\
%        & = f(r)f(r') \\
%f(1_R) & = (1_R,0) \\
%g\left((r,e)(r',e')\right) & = g(rr',re'+r'e+ee') \\
%& = rr' \\
%& = g(r,e)g(r',e')\\
%g(1_R,0) & = 1_R.
%\end{align*}
%
%\end{proof}
%

We prove next that the triple $(R, R \bowtie C, C)$ satisfies Property~\ref{130619.01}.

%\begin{lem}\label{130803.04}
%Let $R$ be a ring, and let $C$ be an ideal in $R$. Then $\hom_R(R \oplus C,C)$ $\cong$ $\hom_R(C,C) \oplus C$ as $R$-modules, and $\ext_R^i(R \oplus C,C)$ $\cong$ $\ext^i_R(C,C)$ as $R$-modules for all $i \geq 1$.
%\end{lem}
%
%\begin{proof}
%As $R$-modules, we have
%\begin{align*}
%\hom_R(R\oplus C,C)  & \cong \hom_R(R,C) \oplus \hom_R(C,C) \\
%                     & \cong E \oplus \hom_R(E,E) \\
%                     & \cong \hom_R(E,E) \oplus E
%\end{align*}
%where Hom cancellation is used in the penultimate isomorphism.
%
%Moreover, we now show that $\ext_R^i(R \oplus E,E) \cong \ext_R^i(E,E)$ for all $i \geq 1$:
%\begin{align*}
%\ext_R^i(R \oplus E,E)	& \cong \ext_R^i(R,E)\oplus \ext_R^i(E,E) \\
%						& \cong 0 \oplus \ext_R^i(E,E) \\
%						& \cong \ext_R^i(E,E)
%\end{align*}
%where we use the fact that $\ext_R^i(R,E)=0$ for all $i \geq 1$ since $R$ is free (hence projective) over $R$.
%\end{proof}
%

\begin{lem}\label{130803.05}
Let $R$ be a ring, and let $C$ be an ideal in $R$. If $C$ is a semidualizing $R$-module, then $\hom_R(R \bowtie C,C) \cong R \bowtie C$ as $R\bowtie C$-modules, and \mbox{$\ext_R^i(R \bowtie C,C)=0$} for all $i \geq 1$.
\end{lem}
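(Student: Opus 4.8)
The plan is to check the Ext\nobreakdash-vanishing and the Hom\nobreakdash-isomorphism separately; the former is immediate and the latter needs an explicit $R\bowtie C$\nobreakdash-linear map, since the $R\bowtie C$\nobreakdash-module structure must be respected.

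\emph{The Ext statement.} By Lemma~\ref{130803.03} the sequence $0\to C\to R\bowtie C\xra{g}R\to 0$ of $R$-modules is split exact, with splitting $f$, so $R\bowtie C\cong R\oplus C$ as $R$-modules. Hence $\ext^i_R(R\bowtie C,C)\cong\ext^i_R(R,C)\oplus\ext^i_R(C,C)$ for every $i$, and for $i\geq 1$ both summands vanish: the first because $R$ is free, the second because $C$ is semidualizing.

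\emph{The Hom statement.} The same splitting gives $\hom_R(R\bowtie C,C)\cong R\oplus C\cong R\bowtie C$ as $R$-modules, but to obtain an $R\bowtie C$-module isomorphism I would write down a map explicitly. Recalling from Definition~\ref{130803.02} that multiplication in $R\bowtie C$ is $(r,c)(r',c')=(rr',\,rc'+r'c+cc')$, define
$$
\Psi:R\bowtie C\to\hom_R(R\bowtie C,C),\qquad \Psi(r_0,c_0)(r,c):=rc_0+r_0c+cc_0,
$$
so that $\Psi(s)$ sends $x$ to the $C$-coordinate of the product $xs$. One checks directly that each $\Psi(s)$ is $R$-linear (where $R$ acts on $R\bowtie C$ through $f$) and that $\Psi$ is $R\bowtie C$-linear: since $R\bowtie C$ is commutative, both $\Psi(st)(x)$ and $(s\,\Psi(t))(x)$ equal the $C$-coordinate of $xst$.

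It remains to prove $\Psi$ is bijective, and this is the only place the semidualizing hypothesis enters, through the facts that $\ann_R(C)=0$ and that the homothety $\chi^R_C\colon R\to\hom_R(C,C)$ is an isomorphism. For injectivity, if $\Psi(r_0,c_0)=0$ then evaluation at $(1_R,0)$ forces $c_0=0$, and then evaluation at $(0,c)$ with $c\in C$ gives $r_0c=0$ for all $c$, so $r_0\in\ann_R(C)=0$. For surjectivity, given $\psi\in\hom_R(R\bowtie C,C)$, set $c_0:=\psi(1_R,0)\in C$; the restriction of $\psi$ to the $R$-submodule $0\oplus C\cong C$ lies in $\hom_R(C,C)$, hence equals $\chi^R_C(a)$ for a unique $a\in R$, and I put $r_0:=a-c_0\in R$. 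Because $R\bowtie C$ is generated as an $R$-module (via $f$) by $(1_R,0)$ together with $0\oplus C$, it suffices to check that $\Psi(r_0,c_0)$ and $\psi$ agree on these generators, which is a short computation using $(r_0+c_0)c=ac$. The main point requiring care is simply keeping the several module structures straight; there is no deeper obstacle, consistent with the ``routine'' claim preceding the lemma.
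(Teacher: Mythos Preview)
Your proof is correct and in fact uses the \emph{same} map as the paper: the paper defines $\Phi(r,c):=\Theta(r+c,c)$, where $\Theta$ is the natural $R$-isomorphism $(r,c)\mapsto\bigl((r'',c'')\mapsto rc''+r''c\bigr)$, and unwinding gives $\Phi(r_0,c_0)(r,c)=r_0c+rc_0+cc_0$, exactly your $\Psi$. The only presentational difference is in verifying bijectivity: the paper observes that $\Phi$ is a composite of two bijections (the shear $(r,c)\mapsto(r+c,c)$ with the known $R$-isomorphism $\Theta$), whereas you argue injectivity and surjectivity directly from $\ann_R(C)=0$ and the homothety isomorphism; conversely, your interpretation of $\Psi(s)(x)$ as the $C$-coordinate of $xs$ makes $R\bowtie C$-linearity transparent, where the paper merely calls it routine.
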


\begin{proof} We first note that the $R \bowtie C$-module structure of $\hom_R(R \bowtie C,C)$ comes from $R \bowtie C$ in the first slot. Specifically, for any $(r,c)$ and $(s,d)$ in $R\bowtie C$, and for any $R$-module homomorphism $\varphi$ from $R\bowtie C$ to $C$, we have $((r,c)\varphi)(s,d)=\varphi((r,c)(s,d))=\varphi(rs,rd+sc+cd)$. Since $R \bowtie C \cong R \oplus C$ as $R$-modules, we know that $\hom_R(R \bowtie C,C) \cong \hom_R(C,C) \oplus C$ as $R$-modules. 

Since $C$ is assumed to be semidualizing over $R$, the natural homothety $R \to \hom_R(C,C)$ is an $R$-module isomorphism, hence there is a natural $R$-module isomorphism $\Theta: R \bowtie C \to \hom_R(R \bowtie C,C)$. Tracing all the natural isomorphisms involved, we see that the natural $R$-module isomorphism $\Theta:R \bowtie C \to \hom_R(R \bowtie C,C)$ is defined by $(r,c) \mapsto \phi^{(r,c)}$, where $\phi^{(r,c)}$ is defined for any $(r'',c'') \in R \bowtie C$ as $\phi^{(r,c)}(r'',c'')=rc''+r''c$.

However, unlike in the case of $R \ltimes C$, the natural $R$-module isomorphism $\Theta$ is \textit{not} an $R \bowtie C$-module isomorphism. We therefore use $\Theta$ to construct a new map $\Phi$ from $R \bowtie C$ to $\hom_R(R\bowtie C, C)$, and we prove that $\Phi$ is indeed an $R\bowtie C$-module isomorphism.

Define $\Phi:R \bowtie C \to \mbox{$\hom_R(R\bowtie C,C)$}$ as $\Phi(r,c):=\Theta(r+c,c)$ for any $(r,c)$ in $R\bowtie C$. It is then immediate that $\Phi$ is bijective, and it is also routine to check that $\Phi$ is indeed an $R \bowtie C$-module homomorphism with respect to the module structures noted above. 

Finally, we note that we already have $\ext_R^i(R \bowtie C,C)\cong \ext_R^i(C,C)$ as $R$-modules. Since $C$ is semidualizing over $R$, we have $\ext_R^i(C,C)\cong 0$ for all $i \geq 1$, and $\ext_R^i(R \bowtie C, C)\cong 0$ as well.
\end{proof}

The next result is Theorem~\cref{140626.03}{01} from the introduction.

\begin{thm}\label{130803.06}
Let $R$ be a ring, let $C$ be an ideal in $R$, and set $S:=R\bowtie C$. If $C$ is semidualizing as an $R$-module, then $(R,S,C)$ satisfies Property~\ref{130831.01}.
\end{thm}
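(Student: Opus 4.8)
The plan is to assemble the two preceding lemmas. Recall that, by definition, the triple $(R,S,C)$ satisfies Property~\ref{130831.01} exactly when it satisfies Property~\ref{130619.01} and, in addition, $C \cong \ker g$ as $R$-modules; and Property~\ref{130619.01} asks for a commutative retract diagram of ring homomorphisms together with an $S$-module isomorphism $\hom_R(S,C) \cong S$ and the vanishing $\ext_R^i(S,C) = 0$ for all $i \geq 1$. So it suffices to produce each of these four ingredients with $S = R \bowtie C$.

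First I would invoke Lemma~\ref{130803.03}, which supplies the commutative retract diagram with ring homomorphisms $f(r) = (r,0)$ and $g(r,c) = r$, and which simultaneously records the isomorphism $\ker g \cong C$ of $R$-modules. This takes care of the diagrammatic part of Property~\ref{130619.01} and of the extra hypothesis needed to upgrade it to Property~\ref{130831.01}. Then I would invoke Lemma~\ref{130803.05}: since $C$ is assumed semidualizing over $R$, that lemma gives $\hom_R(R \bowtie C, C) \cong R \bowtie C$ as $(R \bowtie C)$-modules and $\ext_R^i(R \bowtie C, C) = 0$ for all $i \geq 1$, which are precisely the two homological conditions of Property~\ref{130619.01}. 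Combining these statements finishes the proof.

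There is no real obstacle left at the level of this theorem; the one delicate point has already been dealt with inside Lemma~\ref{130803.05}, namely that the naive $R$-linear isomorphism $\Theta \colon R \bowtie C \to \hom_R(R \bowtie C, C)$ induced by the homothety $R \xra{\cong} \hom_R(C,C)$ is \emph{not} $(R \bowtie C)$-linear (in contrast to the trivial-extension case $R \ltimes C$), and must be replaced by the twisted map $\Phi(r,c) := \Theta(r+c,c)$. Thus the present argument is essentially the bookkeeping step that repackages Lemmas~\ref{130803.03} and~\ref{130803.05} in the language of Property~\ref{130831.01}.
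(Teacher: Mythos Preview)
Your proposal is correct and matches the paper's proof exactly: the paper simply says that Lemma~\ref{130803.03} combined with Lemma~\ref{130803.05} yields the result. Your additional commentary about the twisted map $\Phi$ is accurate but already belongs to the proof of Lemma~\ref{130803.05}, not to this theorem.
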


\begin{proof}
Lemma~\ref{130803.03} combined with Lemma~\ref{130803.05} yields the desired result.
\end{proof}

Since $(R,R\bowtie C,C)$ satisfies Property~\ref{130831.01}, Theorem~\ref{130904.02} can be applied to reover the following result of Salimi et. al. \cite{salimi-adrsdi}.

\begin{cor}\label{140501.03}
Let $R$ be a ring, and let $C$ be an ideal in $R$ such that $C$ is semidualizing over $R$. Then, for any homologically left-bounded $R$-complex $M$ and any homologically right-bounded $R$-complex $N$, one has
\begin{align*}
C\operatorname{-Gid}_R M & = \operatorname{Gid}_{R \bowtie C} M \\
C\operatorname{-Gpd}_R N & = \operatorname{Gpd}_{R \bowtie C} N \\
C\operatorname{-Gfd}_R N & = \operatorname{Gfd}_{R \bowtie C} N 
\end{align*}
\end{cor}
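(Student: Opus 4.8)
The plan is straightforward: this corollary is simply the specialization of Theorem~\ref{130904.02} to the amalgamated duplication construction, so the work is entirely in verifying the hypotheses of that theorem are met. First I would invoke Theorem~\ref{130803.06}, which is already established just above: it asserts precisely that when $C$ is an ideal of $R$ that happens to be semidualizing as an $R$-module, the triple $(R, R\bowtie C, C)$ satisfies Property~\ref{130831.01}. This is the content-bearing step, but it has been done; here we only need to cite it.

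With that in hand, set $S := R\bowtie C$ and apply Theorem~\ref{130904.02} to this triple. Since $C$ is semidualizing over $R$ by hypothesis and $(R, S, C)$ satisfies Property~\ref{130831.01} by Theorem~\ref{130803.06}, the three equalities
\begin{align*}
C\operatorname{-Gid}_R M & = \operatorname{Gid}_S M \\
C\operatorname{-Gpd}_R N & = \operatorname{Gpd}_S N \\
C\operatorname{-Gfd}_R N & = \operatorname{Gfd}_S N
\end{align*}
hold for every homologically left-bounded $R$-complex $M$ and every homologically right-bounded $R$-complex $N$. Substituting $S = R\bowtie C$ into these equalities gives exactly the asserted formulas.

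There is no genuine obstacle here — the entire mathematical difficulty was absorbed into Lemma~\ref{130803.05} (computing $\hom_R(R\bowtie C, C)\cong R\bowtie C$ as $R\bowtie C$-modules, which required the nonobvious twist from $\Theta$ to $\Phi$) and into the chain of propositions culminating in Theorem~\ref{130904.02}. The only thing worth a sentence of commentary is the observation that this recovers \cite[Theorem 3.2]{salimi-adrsdi}, since that is the point of stating the corollary separately rather than leaving it implicit in Theorem~\ref{130904.02}.
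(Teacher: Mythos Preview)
Your proposal is correct and matches the paper's approach exactly: the paper simply observes that since $(R, R\bowtie C, C)$ satisfies Property~\ref{130831.01} by Theorem~\ref{130803.06}, the corollary follows immediately from Theorem~\ref{130904.02}. No separate proof is even written out in the paper.
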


\subsection{Pseudocanonical Cover}

\

In this section, we apply Theorem~\ref{130904.02} to pseudocanonical covers introduced by Enescu in \cite{enescu-fclcpc}.

\begin{notationdefinition}\label{130803.07}
Let $R$ be a ring, let $h \in R$, and let $C$ be an ideal in $R$. We define a ring structure on $R \oplus C$ by defining $(r,c)(r',c')=(rr'+cc'h,rc'+r'c)$ for each $(r,c),(r',c') \in R \oplus C$. The group $R \oplus C$ with this multiplication structure, denoted as $S(h)$, is indeed a ring with $(1_R,0)$ as its multiplicative identity~\cite{enescu-fclcpc}, and is called the \emph{pseudocanonical cover of $R$ via $h$}.
\end{notationdefinition}

We construct a retract diagram similar to the one in Property~\ref{130619.01} using $S(h)$.

\begin{lem}\label{130803.08}
Let $R$ be a ring, let $C$ be an ideal and let $h \in R$ such that $h=r_0^2$ for some $r_0 \in R$. Then the diagram
$$
\xymatrix{
R \ar^{f}[r] \ar_{\id_R}[rd] & S(h) \ar^{g}[d] \\
& R
}
$$
where $f(r):=(r,0)$ and $g(r,c):=r+cr_0$ for each $r \in R$ and $c \in C$, is a commutative diagram of ring homomorphisms such that $\ker g \cong C$ as $R$-modules.
\end{lem}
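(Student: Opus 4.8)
The plan is to verify the three assertions separately: that $f$ and $g$ are ring homomorphisms, that the triangle commutes, and that $\ker g \cong C$ as $R$-modules. Everything reduces to direct computation with the multiplication rule $(r,c)(r',c') = (rr' + cc'h,\, rc' + r'c)$ that defines $S(h)$, so most of the work is bookkeeping; the only place where the hypothesis $h = r_0^2$ is used is in checking that $g$ respects multiplication, and I will flag that as the crux.

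First I would check that $f(r) = (r,0)$ is a ring homomorphism: additivity is immediate, $f(1_R) = (1_R,0)$ is the multiplicative identity of $S(h)$, and $f(r)f(r') = (r,0)(r',0) = (rr',0) = f(rr')$ because the terms $cc'h$ and $rc' + r'c$ both vanish when $c = c' = 0$. Commutativity of the triangle is then trivial: $g(f(r)) = g(r,0) = r + 0\cdot r_0 = r = \id_R(r)$.

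Next I would verify that $g(r,c) = r + cr_0$ is a ring homomorphism. Additivity and $g(1_R,0) = 1_R$ are clear, so the substance is the multiplicative identity. Expanding both sides,
$$g\big((r,c)(r',c')\big) = g(rr' + cc'h,\, rc' + r'c) = rr' + cc'h + (rc' + r'c)r_0,$$
while
$$g(r,c)\,g(r',c') = (r + cr_0)(r' + c'r_0) = rr' + rc'r_0 + r'cr_0 + cc'r_0^2.$$
These agree precisely because $h = r_0^2$; this is the single point at which the square hypothesis is needed, and it explains why it appears in the statement.

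Finally, for the kernel: an element $(r,c) \in S(h)$ lies in $\ker g$ iff $r = -cr_0$, so $\ker g = \{(-cr_0, c) : c \in C\}$. Under the $R$-module structure on $S(h)$ induced by $f$, namely $s\cdot(r,c) = (s,0)(r,c) = (sr, sc)$, the assignment $c \mapsto (-cr_0, c)$ is $R$-linear, and it is visibly bijective with inverse $(-cr_0, c) \mapsto c$; hence $\ker g \cong C$ as $R$-modules. I do not anticipate any genuine obstacle here — the lemma is a routine verification — with the only noteworthy point being, again, that the condition $h = r_0^2$ is exactly what makes $g$ multiplicative.
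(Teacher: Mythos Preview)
Your proposal is correct and follows essentially the same approach as the paper's proof: both verify $f$ and $g$ directly, isolate the multiplicativity of $g$ as the unique place where $h=r_0^2$ is used, and exhibit the isomorphism $\ker g\cong C$ via $c\mapsto(-cr_0,c)$. Your write-up is slightly more explicit about the $R$-module structure on $S(h)$, but the argument is the same.
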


\begin{proof}
By construction, $f$ and $g$ are well-defined functions making the diagram commute. It is routine to check that $f$ is a ring homomorphism and that $g$ respects addition. To check that $g$ respects multiplication as well, let $r,r' \in R$ and $c,c'\in C$. Then
\begin{align*}
%g\left((r,e)+(r',e')\right) & = g\left(r+r',e+e'\right) \\
%& = r+r'+(e+e')r_0 \\
%& = g(r,e)+g(r',e')\\
g\left((r,c)(r',c')\right) & = g(rr'+cc'h,rc'+r'c) \\
& = rr'+cc'h+rc'r_0+r'cr_0 \\
& =r(r'+c'r_0)+cc'r_0^2+r'cr_0 \\
& = r(r'+c'r_0)+cr_0(c'r_0+r') \\
& = (r+cr_0)(r'+c'r_0) \\
& = g(r,c)g(r',c')
\end{align*}
where we used the fact that $h=r_0^2$.

We note that $\ker g$ is the $R$-submodule of $S(h)$ consisting of all elements of the form $(-cr_0,c)$ with $c \in C$. Therefore one can readily prove that the map from $C$ to $\ker g$ that sends $c$ to $(-cr_0,c)$ is indeed an $R$-module isomorphism.
\end{proof}

\begin{lem}\label{130803.09}
Let $R$ be a ring, let $C$ be an ideal in $R$, and let $h \in R$ such that $h=r_0^2$ for some $r_0 \in R$. If $C$ is semidualizing over $R$, then $\hom_R(S(h),C) \cong S(h)$ as $S(h)$-modules, and $\ext_R^i(S(h),C)=0$ for all $i \geq 1$.
\end{lem}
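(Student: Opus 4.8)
The plan is to mimic the strategy of Lemma~\ref{130803.05}, adapting the construction of the isomorphism to account for the different multiplication in $S(h)$. As $R$-modules we have $S(h) \cong R \oplus C$, so by Hom-additivity and the semidualizing hypothesis
$$
\hom_R(S(h),C) \cong \hom_R(R,C) \oplus \hom_R(C,C) \cong C \oplus R \cong S(h)
$$
as $R$-modules; tracing through these natural isomorphisms produces an explicit $R$-module isomorphism $\Theta: S(h) \to \hom_R(S(h),C)$ sending $(r,c)$ to the map $(r'',c'') \mapsto rc'' + r''c$ (the same formula as in Lemma~\ref{130803.05}, since this computation only uses the underlying $R$-module structure, not the ring multiplication). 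First I would record the $S(h)$-module structure on $\hom_R(S(h),C)$ coming from the first argument: for $(r,c),(s,d) \in S(h)$ and $\varphi \in \hom_R(S(h),C)$,
$$
((r,c)\varphi)(s,d) = \varphi\bigl((r,c)(s,d)\bigr) = \varphi(rs+cdh,\, rd+sc).
$$

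Next, just as in Lemma~\ref{130803.05} the naive map $\Theta$ will fail to be $S(h)$-linear, so I would correct it. The correct twist is dictated by the formula $g(r,c) = r + cr_0$ appearing in Lemma~\ref{130803.08}: define $\Phi: S(h) \to \hom_R(S(h),C)$ by $\Phi(r,c) := \Theta(r + cr_0,\, c)$, or equivalently $\Phi(r,c)$ is the $R$-homomorphism $(r'',c'') \mapsto (r+cr_0)c'' + r''c$. Since $(r,c) \mapsto (r+cr_0,c)$ is an $R$-module automorphism of $S(h)$ and $\Theta$ is an $R$-module isomorphism, $\Phi$ is automatically an $R$-module isomorphism — in particular bijective. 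The substance of the proof is then the verification that $\Phi$ is $S(h)$-linear, i.e. $\Phi((r,c)(s,d)) = (r,c)\cdot\Phi(s,d)$ for all $(r,c),(s,d) \in S(h)$. Using $(r,c)(s,d) = (rs+cdh, rd+sc)$ and $h = r_0^2$, the left side evaluated at $(r'',c'')$ is
$$
\bigl((rs+cdh) + (rd+sc)r_0\bigr)c'' + r''(rd+sc),
$$
and one expands $(s+dr_0)(r c'' + r'' d)$-type terms on the right, where the multiplicativity of $g$ (equivalently, the identity $cdh + (rd+sc)r_0 = \cdots$ after regrouping with $h=r_0^2$) makes the two sides agree; this is a routine but slightly fiddly bookkeeping computation, entirely analogous to the check that $g$ respects multiplication in Lemma~\ref{130803.08}, and it is where the hypothesis that $h$ is a square is genuinely used.

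Finally, the Ext-vanishing is immediate and requires no new ideas: since $S(h) \cong R \oplus C$ as $R$-modules, $\ext_R^i(S(h),C) \cong \ext_R^i(R,C) \oplus \ext_R^i(C,C)$, the first summand vanishes for $i \geq 1$ because $R$ is projective over $R$, and the second vanishes for $i \geq 1$ because $C$ is semidualizing over $R$; hence $\ext_R^i(S(h),C) = 0$ for all $i \geq 1$. I expect the main obstacle to be the $S(h)$-linearity check for $\Phi$: one must pick the twist by $r_0$ correctly (a wrong sign or a missing $r_0$ breaks linearity), and the verification uses $h = r_0^2$ in an essential way, which is precisely why the theorem only claims the pseudocanonical cover satisfies Property~\ref{130619.01} when $h$ is a square.
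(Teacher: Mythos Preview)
Your proposal contains a genuine error: you assert, by analogy with Lemma~\ref{130803.05}, that the natural map $\Theta$ fails to be $S(h)$-linear and must be twisted by $r_0$. In fact the opposite is true here. For $S(h)$ the extra cross-term $cc'h$ in the product $(r,c)(r',c')=(rr'+cc'h,\,rc'+r'c)$ lands in the \emph{first} coordinate, not the second, and this is exactly what makes the untwisted $\Theta$ work. A direct check: with $\Theta(r,c)(r'',c'')=rc''+r''c$, one computes
\[
\bigl((r,c)\cdot\Theta(s,d)\bigr)(r'',c'')
=\Theta(s,d)\bigl(rr''+cc''h,\;rc''+r''c\bigr)
=s(rc''+r''c)+(rr''+cc''h)d,
\]
which rearranges to $(rs+cdh)c''+r''(rd+sc)=\Theta\bigl((r,c)(s,d)\bigr)(r'',c'')$. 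So $\Theta$ is already an $S(h)$-module isomorphism, and this is precisely what the paper does. Note that $h=r_0^2$ is never used in this computation; that hypothesis is needed only for the retract map $g$ in Lemma~\ref{130803.08}, not for the present lemma.

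Worse, your twisted map $\Phi(r,c)=\Theta(r+cr_0,c)$ is \emph{not} $S(h)$-linear. Carrying out the comparison of $\Phi\bigl((r,c)(s,d)\bigr)(r'',c'')$ with $\bigl((r,c)\cdot\Phi(s,d)\bigr)(r'',c'')$, all terms match except that the left side contributes $scr_0c''$ while the right side contributes $dr_0r''c$; these are not equal in general (take $h=1$, $r_0=1$, $C=R$, $s=c=c''=1$, $d=r''=0$). So the ``routine bookkeeping'' you anticipate would in fact fail. The fix is simply to drop the twist and verify $S(h)$-linearity of $\Theta$ directly, as above; your treatment of the Ext-vanishing is fine.
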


\begin{proof} We first note that the $S(h)$-module structure of $\hom_R(S(h),C)$ comes from $S(h)$ in the first slot. Since $S(h) \cong R \oplus C$ as $R$-modules, we know that
$$
\hom_R(S(h),C) \cong \hom_R(C,C) \oplus C
$$
as $R$-modules.

Since $C$ is assumed to be semidualizing over $R$, we have $\hom_R(C,C) \cong R$ as $R$-modules, hence $S(h)\cong \hom_R(S(h),C)$ as $R$-modules. Tracing the composition of all the natural $R$-module isomorphisms above, we have an $R$-module isomorphism $\Theta:S(h)\to \hom_R(S(h),C)$ sending $(r,c) \mapsto \varphi^{(r,c)}$, where $\varphi^{(r,c)}$ is defined for any $(r'',c'') \in S(h)$ as $\varphi^{(r,c)}(r'',c'')=rc''+r''c$. It is routine to check that $\Theta$ is also an $S(h)$-module homomorphism.

%We now prove that this $\Theta$ is also an $S(h)$-module homomorphism. Let $(r,e),(r',e') \in S(h)$. We need to show that $\Theta((r,e)(r',e'))=(r,e)\Theta(r',e')$ in $\hom_R(S(h), E)$. For each $(r'',e'') \in S(h)$, we have
%\begin{align*}
%\left(\Theta((r,e)(r',e'))\right)(r'',e'') & = \left(\Theta(rr'+ee'h,re'+r'e)\right)(r'',e'') \\
%& =r''(re'+r'e)+(rr'+ee'h)e'' \\
%& =rr''e'+r'r''e+rr'e''+ee'e''h
%\end{align*}
%
%and
%
%\begin{align*}
%\left((r,e)\Theta(r',e')\right)(r'',e'') & = (\Theta(r',e'))((r,e)(r'',e'')) \\
%& = (\Theta(r',e'))(rr''+ee''h,re''+r''e) \\
%& = (rr''+ee''h)(e')+(r')(re''+r''e) \\
%& = rr''e'+ee'e''h+rr'e''+r'r''e
%\end{align*}
%proving that $\Theta$ is indeed an $S(h)$-module homomorphism.

Finally, we have that $\ext_R^i(S(h),C)\cong \ext_R^i(C,C)$ as $R$-modules for all $i \geq 1$. Since $C$ is semidualizing over $R$, we have $\ext_R^i(C,C) = 0$ for all $i \geq 1$, hence $\ext_R^i(S(h), C) = 0$ as well.
\end{proof}

The next result is Theorem~\cref{140626.03}{02} from the introduction.

\begin{thm}\label{130803.10}
Let $R$ be a ring, let $C$ be an ideal in $R$, let $h \in R$ such that $h=r_0^2$ for some $r_0 \in R$, and let $S(h)$ be the pseudocanonical cover of $R$ via $h$. If $C$ is semidualizing as an $R$-module, then $(R,S(h),C)$ satisfies Property~\ref{130831.01}.
\end{thm}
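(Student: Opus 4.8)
The plan is to combine Lemma~\ref{130803.08} and Lemma~\ref{130803.09} in exactly the same way that Theorem~\ref{130803.06} combined Lemma~\ref{130803.03} with Lemma~\ref{130803.05}. Lemma~\ref{130803.08} supplies a commutative retract diagram
$$
\xymatrix{
R \ar^{f}[r] \ar_{\id_R}[rd] & S(h) \ar^{g}[d] \\
& R
}
$$
of ring homomorphisms together with the $R$-module isomorphism $\ker g \cong C$; this is precisely the extra condition that upgrades Property~\ref{130619.01} to Property~\ref{130831.01}. Lemma~\ref{130803.09} supplies the two remaining ingredients of Property~\ref{130619.01}: the $S(h)$-module isomorphism $\hom_R(S(h),C)\cong S(h)$ and the vanishing $\ext_R^i(S(h),C)=0$ for all $i\geq 1$, both of which hold precisely because $C$ is semidualizing over $R$. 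So the proof is a one-line citation of these two lemmas.

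First I would record that, since $h=r_0^2$, Lemma~\ref{130803.08} applies and gives the retract diagram above with $f(r)=(r,0)$, $g(r,c)=r+cr_0$, and $\ker g\cong C$ as $R$-modules. Next I would invoke Lemma~\ref{130803.09}, whose hypotheses (that $C$ is an ideal, that $h=r_0^2$, and that $C$ is semidualizing over $R$) are all in force, to conclude $\hom_R(S(h),C)\cong S(h)$ as $S(h)$-modules and $\ext_R^i(S(h),C)=0$ for $i\geq 1$. These are exactly the conditions in Property~\ref{130619.01} for the triple $(R,S(h),C)$. Finally, combining this with the isomorphism $\ker g\cong C$ from the retract diagram yields Property~\ref{130831.01} for $(R,S(h),C)$, as desired.

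There is essentially no obstacle at this level: all the genuine work—checking that $g$ is a ring homomorphism when $h$ is a square, identifying $\ker g$, and constructing the $S(h)$-module isomorphism $\hom_R(S(h),C)\cong S(h)$—has already been carried out in Lemmas~\ref{130803.08} and~\ref{130803.09}. If I were to point to the one place where care was needed, it is in Lemma~\ref{130803.09}: unlike in Lemma~\ref{130803.05}, where the naive $R$-module isomorphism $\Theta$ had to be corrected to an $R\bowtie C$-module map $\Phi$, here the ring structure on $S(h)$ happens to be compatible with $\Theta$ directly, so one must verify that $\Theta$ is already an $S(h)$-module homomorphism rather than merely an $R$-module one. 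That verification is the real content, and it has been dispatched in the proof of Lemma~\ref{130803.09}.

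\begin{proof}
Since $h=r_0^2$ for some $r_0 \in R$, Lemma~\ref{130803.08} provides the commutative retract diagram of ring homomorphisms
$$
\xymatrix{
R \ar^{f}[r] \ar_{\id_R}[rd] & S(h) \ar^{g}[d] \\
& R
}
$$
with $f(r)=(r,0)$ and $g(r,c)=r+cr_0$, and such that $\ker g \cong C$ as $R$-modules. Moreover, since $C$ is an ideal of $R$ that is semidualizing over $R$ and $h=r_0^2$, Lemma~\ref{130803.09} gives $\hom_R(S(h),C)\cong S(h)$ as $S(h)$-modules and $\ext_R^i(S(h),C)=0$ for all $i\geq 1$. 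Together with the retract diagram above, this shows that the triple $(R,S(h),C)$ satisfies Property~\ref{130619.01}. Since in addition $\ker g \cong C$ as $R$-modules, the triple $(R,S(h),C)$ satisfies Property~\ref{130831.01}.
\end{proof}
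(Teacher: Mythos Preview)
Your proposal is correct and follows exactly the same approach as the paper: the paper's proof is the single sentence ``Lemma~\ref{130803.08} combined with Lemma~\ref{130803.09} yields the desired result,'' and you have simply unpacked that citation in more detail.
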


\begin{proof}
Lemma~\ref{130803.08} combined with Lemma~\ref{130803.09} yields the desired result.
\end{proof}

We can apply Theorem~\ref{130904.02} to $S(h)$.

\begin{cor}\label{140501.04}
Let $R$ be a ring, let $C$ be an ideal in $R$ such that $C$ is semidualizing over $R$, and let $h \in R$ such that $h=r_0^2$ for some $r_0\in R$. Then, for any homologically left-bounded $R$-complex $M$ and any homologically right-bounded $R$-complex $N$, one has
\begin{align*}
C\operatorname{-Gid}_R M & = \operatorname{Gid}_{S(h)} M \\
C\operatorname{-Gpd}_R N & = \operatorname{Gpd}_{S(h)} N \\
C\operatorname{-Gfd}_R N & = \operatorname{Gfd}_{S(h)} N 
\end{align*}
\end{cor}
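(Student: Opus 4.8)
The plan is to obtain this corollary as an immediate consequence of Theorem~\ref{130904.02}, once we check that the triple $(R,S(h),C)$ fits that theorem's hypotheses. First I would invoke Theorem~\ref{130803.10}: since $C$ is a semidualizing $R$-module and $h=r_0^2$ is a square in $R$, that theorem asserts precisely that $(R,S(h),C)$ satisfies Property~\ref{130831.01}. Internally this is where the real content sits: Theorem~\ref{130803.10} combines Lemma~\ref{130803.08}, which builds the retract diagram $R \xra{f} S(h) \xra{g} R$ with $\ker g \cong C$ as $R$-modules — and here the assumption $h=r_0^2$ is essential, as it is exactly what makes $g(r,c)=r+cr_0$ respect multiplication — with Lemma~\ref{130803.09}, which verifies $\hom_R(S(h),C)\cong S(h)$ as $S(h)$-modules and $\ext_R^i(S(h),C)=0$ for all $i\geq 1$.

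Second, with the hypotheses of Theorem~\ref{130904.02} now in hand — namely that $R$ and $S:=S(h)$ are rings, $C$ is a semidualizing $R$-module, and $(R,S(h),C)$ satisfies Property~\ref{130831.01} — I would apply that theorem verbatim to the homologically left-bounded complex $M$ and the homologically right-bounded complex $N$. This immediately yields the three equalities $C\operatorname{-Gid}_R M=\operatorname{Gid}_{S(h)}M$, $C\operatorname{-Gpd}_R N=\operatorname{Gpd}_{S(h)}N$, and $C\operatorname{-Gfd}_R N=\operatorname{Gfd}_{S(h)}N$.

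Since all of the substantive homological work — the stability of $C$-Gorenstein injective/projective/flat modules under restriction of scalars along the retract, encoded in Propositions~\ref{130808.03}, \ref{140302.07} and \ref{140302.08}, and the passage from modules to complexes in Theorem~\ref{130904.02} — has already been carried out at the level of an abstract triple satisfying Property~\ref{130831.01}, there is essentially no obstacle left here. The only point demanding any care is confirming that the pseudocanonical cover genuinely instantiates the abstract framework, which is exactly Theorem~\ref{130803.10}; in particular I would not expect to need any feature of the multiplication $(r,c)(r',c')=(rr'+cc'h,\,rc'+r'c)$ on $S(h)$ beyond what was already used in Lemmas~\ref{130803.08} and \ref{130803.09}. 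So the proof reduces to the one-line assembly ``Theorem~\ref{130803.10} plus Theorem~\ref{130904.02}.''
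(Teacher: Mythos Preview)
Your proposal is correct and matches the paper's own proof exactly: the paper simply states that since $(R,S(h),C)$ satisfies Property~\ref{130831.01} (by Theorem~\ref{130803.10}), the corollary is a direct application of Theorem~\ref{130904.02}. Your additional remarks unpacking where Theorem~\ref{130803.10} comes from and what Theorem~\ref{130904.02} encapsulates are accurate elaboration, but the core one-line assembly is identical.
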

\begin{proof}
Since $(R,S(h),C)$ satisfies Property~\ref{130831.01}, this is a direct application of Theorem~\ref{130904.02}.
\end{proof}

\section{Final Remarks}

It is natural to ask if the general settings we discussed characterizes the situation where an $R$-module $M$ is $C$-Gorenstein projective/injective/flat over $R$ if and only if $M$ is Gorenstein projective/injective/flat over $S$. For example, one can ask if Property~\ref{130831.01} is a necessary condition for Theorem~\ref{140302.07}. This fails in general, and the following is a counterexample.

\begin{ex}\label{140704.02}
Let $C$ be a semidualizing module, and set $R_1:=R \ltimes C$ and $S:=R_1 \ltimes R_1$. Let $g_1$ and $g_2$ be appropriate canonical projections, making the following diagram commute, and let $g:=g_2 \circ g_1$.
$$
\xymatrix{
R \ar[rdd] \ar[r]& S \ar^-{g_1}[d]\\
 & R_1 \ar^-{g_2}[d] \\
 &R
}
$$
We note that $M$ is $C$-Gorenstein projective over $R$ if and only if it is Gorenstein projective over $R_1$, if and only if Gorenstein projective over $S$ by \cite[Proposition 2.13]{holm-smarghd}. However, the triple $(R,S,C)$ does not satisfy Property~\ref{130831.01} because $\ker g \not \cong C$; noting that $S\cong R \oplus (C \oplus R \oplus C)$ as $R$-modules and $g=g_2 \circ g_1$, we have $\ker g \cong R \oplus C^2$, which is different from $C$. 
\end{ex}

We finally note here that the $R$-module structure on $S$ in the previous example is not by accident. If we assume that a retract diagram in our general setting exists, i.e., there exists a ring homomorphism $f: R \to S$ such that $g \circ f=\id_R$, then $g$ is a split surjection. This implies that $S \cong R \oplus \ker g$ as $R$-modules as in the above example.

\section*{Acknowledgements}
We are grateful to the anonymous referee for his or her detailed comments.
%\bibliography{F:/Documents/Research/bib-main}

\begin{thebibliography}{10}

\bibitem{auslander-smt}
Maurice Auslander and Mark Bridger, \emph{Stable module theory}, Memoirs of the
  American Mathematical Society, No. 94, American Mathematical Society,
  Providence, R.I., 1969. \MR{0269685 (42 \#4580)}

\bibitem{auslander-hdilr}
Maurice Auslander and David~A. Buchsbaum, \emph{Homological dimension in local
  rings}, Trans. Amer. Math. Soc. \textbf{85} (1957), 390--405. \MR{0086822
  (19,249d)}

\bibitem{avramov-rhfgd}
Luchezar~L. Avramov and Hans-Bj{\o}rn Foxby, \emph{Ring homomorphisms and
  finite {G}orenstein dimension}, Proc. London Math. Soc. (3) \textbf{75}
  (1997), no.~2, 241--270. \MR{1455856 (98d:13014)}

\bibitem{christensen-gd}
Lars~Winther Christensen, \emph{Gorenstein dimensions}, Lecture Notes in
  Mathematics, vol. 1747, Springer-Verlag, Berlin, 2000. \MR{1799866
  (2002e:13032)}

\bibitem{christensen-sdctac}
\bysame, \emph{Semi-dualizing complexes and their {A}uslander categories},
  Trans. Amer. Math. Soc. \textbf{353} (2001), no.~5, 1839--1883 (electronic).
  \MR{1813596 (2002a:13017)}

\bibitem{christensen-gpifd}
Lars~Winther Christensen, Anders Frankild, and Henrik Holm, \emph{On
  {G}orenstein projective, injective and flat dimensions---a functorial
  description with applications}, J. Algebra \textbf{302} (2006), no.~1,
  231--279. \MR{2236602 (2007h:13022)}

\bibitem{d'anna-adri}
Marco D'Anna and Marco Fontana, \emph{An amalgamated duplication of a ring
  along an ideal: the basic properties}, J. Algebra Appl. \textbf{6} (2007),
  no.~3, 443--459. \MR{2337762 (2008i:13002)}

\bibitem{enescu-fclcpc}
Florian Enescu, \emph{A finiteness condition on local cohomology in positive
  characteristic}, J. Pure Appl. Algebra \textbf{216} (2012), no.~1, 115--118.
  \MR{2826424 (2012h:13010)}

\bibitem{enochs-gipm}
Edgar~E. Enochs and Overtoun M.~G. Jenda, \emph{Gorenstein injective and
  projective modules}, Math. Z. \textbf{220} (1995), no.~4, 611--633.
  \MR{1363858 (97c:16011)}

\bibitem{enochs-jenda-torrecillas-gfm}
Edgar~E. Enochs, Overtoun M.~G. Jenda, and Blas Torrecillas, \emph{Gorenstein
  flat modules}, Nanjing Daxue Xuebao Shuxue Bannian Kan \textbf{10} (1993),
  no.~1, 1--9. \MR{1248299 (95a:16004)}

\bibitem{enochs-jenda-xu-fdgigp}
Edgar~E. Enochs, Overtoun M.~G. Jenda, and Jin~Zhong Xu, \emph{Foxby duality
  and {G}orenstein injective and projective modules}, Trans. Amer. Math. Soc.
  \textbf{348} (1996), no.~8, 3223--3234. \MR{1355071 (96k:13010)}

\bibitem{foxby-gmrm}
Hans-Bj{\o}rn Foxby, \emph{Gorenstein modules and related modules}, Math.
  Scand. \textbf{31} (1972), 267--284 (1973). \MR{0327752 (48 \#6094)}

\bibitem{golod-gdgpi}
E.~S. Golod, \emph{{$G$}-dimension and generalized perfect ideals}, Trudy Mat.
  Inst. Steklov. \textbf{165} (1984), 62--66, Algebraic geometry and its
  applications. \MR{752933 (85m:13011)}

\bibitem{holm-smarghd}
Henrik Holm and Peter J{\o}rgensen, \emph{Semi-dualizing modules and related
  {G}orenstein homological dimensions}, J. Pure Appl. Algebra \textbf{205}
  (2006), no.~2, 423--445. \MR{2203625 (2007c:13021)}

\bibitem{holm-cmhd}
\bysame, \emph{Cohen-{M}acaulay homological dimensions}, Rend. Semin. Mat.
  Univ. Padova \textbf{117} (2007), 87--112. \MR{2351787 (2008m:13022)}

\bibitem{ishikawa-imfm}
Takeshi Ishikawa, \emph{On injective modules and flat modules}, J. Math. Soc.
  Japan \textbf{17} (1965), 291--296. \MR{0188272 (32 \#5711)}

\bibitem{salimi-adrsdi}
Maryam Salimi, Elham Tavasoli, and Siamak Yassemi, \emph{The amalgamated
  duplication of a ring along a semidualizing ideal}, Rend. Semin. Mat. Univ.
  Padova \textbf{129} (2013), 115--127. \MR{3090634}

\bibitem{ssw-rc}
S.~{Sather-Wagstaff}, \emph{Reflexivity and connectedness}, Glasgow
  Mathematical Journal, to appear, \texttt{arXiv:1212.0131v1}.

\bibitem{serre-sldh}
Jean-Pierre Serre, \emph{Sur la dimension homologique des anneaux et des
  modules noeth\'eriens}, Proceedings of the international symposium on
  algebraic number theory, {T}okyo \& {N}ikko, 1955 (Tokyo), Science Council of
  Japan, 1956, pp.~175--189. \MR{0086071 (19,119a)}

\bibitem{takahashi-hasm}
Ryo Takahashi and Diana White, \emph{Homological aspects of semidualizing
  modules}, Math. Scand. \textbf{106} (2010), no.~1, 5--22. \MR{2603458
  (2011a:16016)}

\bibitem{vasconcelos-dtmc}
Wolmer~V. Vasconcelos, \emph{Divisor theory in module categories},
  North-Holland Publishing Co., Amsterdam, 1974, North-Holland Mathematics
  Studies, No. 14, Notas de Matem{\'a}tica No. 53. [Notes on Mathematics, No.
  53]. \MR{0498530 (58 \#16637)}

\bibitem{white-gpdsdm}
Diana White, \emph{Gorenstein projective dimension with respect to a
  semidualizing module}, J. Commut. Algebra \textbf{2} (2010), no.~1, 111--137.
  \MR{2607104 (2011d:13013)}

\bibitem{yassemi-gd}
Siamak Yassemi, \emph{G-dimension}, Math. Scand. \textbf{77} (1995), no.~2,
  161--174. \MR{1379262 (97d:13017)}

\end{thebibliography}
\bibliographystyle{amsplain}

\providecommand{\bysame}{\leavevmode\hbox to3em{\hrulefill}\thinspace}
\providecommand{\MR}{\relax\ifhmode\unskip\space\fi MR }
% \MRhref is called by the amsart/book/proc definition of \MR.
\providecommand{\MRhref}[2]{%
  \href{http://www.ams.org/mathscinet-getitem?mr=#1}{#2}
}
\providecommand{\href}[2]{#2}

\end{document}